\documentclass[12pt]{article}
\usepackage{amssymb,amsmath,amsthm,latexsym}
\usepackage[cp1251]{inputenc}
\usepackage{dsfont}
\usepackage{amssymb}
\usepackage{indentfirst}
\usepackage{graphicx}
\usepackage{caption}
\usepackage{multicol}
\usepackage{multirow}
\usepackage{enumitem}
\usepackage{hyperref}
\hypersetup{citecolor=red,}
\usepackage{amsfonts, array, hhline}
\usepackage{color}
\usepackage{lmodern}
\usepackage[left=1in,right=1in,top=1in,bottom=1.5in]{geometry}

\makeatletter
\theoremstyle{definition}
\newtheorem*{rep@theorem}{\rep@title}
\newcommand{\newreptheorem}[2]{%
\newenvironment{rep#1}[1]{%
 \def\rep@title{#2 \ref{##1}}%
 \begin{rep@theorem}}%
 {\end{rep@theorem}}}
\makeatother

\numberwithin{equation}{section}

\theoremstyle{definition}
\newtheorem{dfn}{Definition}[section]

\newtheorem{thm}[dfn]{Theorem}
\newtheorem{lm}[dfn]{Lemma}

\newtheorem{crl}[dfn]{Corollary}

\newreptheorem{lm}{Lemma}

\theoremstyle{remark}
\newtheorem{rmk}[dfn]{Remark}
\newtheorem{cla}{Claim}[dfn]

\DeclareMathOperator{\arcosh}{arcosh}

\newcommand{\R}{\mathbb{R}}
\renewcommand{\H}{\mathbb{H}}

\newcommand{\E}{\mathbb{E}}
\renewcommand{\S}{\mathbb{S}}

\newcommand{\vol}{{\rm vol}}

\newcommand{\area}{{\rm area}}

\renewcommand{\tilde}{\widetilde}

\title{Prescribed Curvature Problem for Discrete Conformality on Convex Spherical Cone-Metrics}
\author{Ivan Izmestiev,~~~~Roman Prosanov\thanks{This research of R.P. was funded in whole by the Austrian Science Fund (FWF) ESPRIT grant ESP-12-N.},~~~~Tianqi Wu\thanks{The research of T.W. is funded by the U.S. National Science Foundation 1760471.}}
\date{}

\AtEndDocument{\bigskip{\footnotesize
\par
  \textsc{Technische Universit\"at Wien, Wiedner Hauptstra{\ss}e 8-10/104, A-1040 Wien, Austria} \par
  \textit{E-mail}: \texttt{izmestiev@dmg.tuwien.ac.at}
}}

\AtEndDocument{\bigskip{\footnotesize
\par
  \textsc{Universit\"at Wien, Fakult\"at f\"ur Mathematik, Oskar-Morgenstern-Platz 1, A-1090 Wien, Austria} \par
  \textit{E-mail}: \texttt{roman.prosanov@univie.ac.at}
}}

\AtEndDocument{\bigskip{\footnotesize
\par
  \textsc{Clark University, 950 Main St, Worcester, MA 01610, USA} \par
  \textit{E-mail}: \texttt{tianwu@clarku.edu}
}}

\begin{document}
\maketitle
%\tableofcontents

\begin{abstract}
Let $S$ be the 2-sphere and $V \subset S$ be a finite set of at least three points. We show that for each function $\kappa: V \rightarrow (0, 2\pi)$ satisfying elementary necessary conditions, in each discrete conformal class of spherical cone-metrics there exists a unique metric realizing $\kappa$ as its discrete curvature. This can be seen as a discrete version of a result of Luo and Tian.
\end{abstract}

\section{Introduction}

\subsection{Prescribing curvature on surfaces}

A natural question in differential geometry of surfaces is: \emph{which smooth functions on a surface $S$ can appear as the Gaussian curvature of a Riemannian metric $g$ on $S$?} For simplicity we restrict the exposition here to closed surfaces. A basic necessary condition is given by the Gauss--Bonnet theorem, which states that
\begin{equation}
\label{gb}
\int_S \kappa_g d\area_g=2\pi\chi(S).
\end{equation}
Here $\kappa_g$ is the curvature of $g$ and $d\area_g$ is its area form. It implies that a potential function $\kappa$ somewhere must attain a value of the same sign as the Euler characteristic $\chi(S)$.

An early insight was that this problem has too much flexibility, and actually becomes more tractable when we consider not all Riemannian metrics, but restrict ourselves to some \emph{conformal class}.

\begin{dfn}
\label{smoothdef}
Two Riemannian metrics $g$ and $g'$ on $S$ are \emph{simply conformally equivalent} if for a smooth function $u: S \rightarrow \R$ and a diffeomorphism $\phi: S \rightarrow S$ we have $g'=e^{2u}\phi^*(g).$

They are called \emph{conformally equivalent} if $\phi$ is the identity.
\end{dfn}

Note that some authors say conformal equivalence for the former definition, and say pointwise conformal equivalence for the latter.

The prescribed curvature problem in a given conformal class becomes equivalent to a resolution of a non-linear elliptic partial differential equation. When one works in a given simple conformal class, one may also greatly change the potential curvature function acting on it by diffeomorphisms. 
%It turned out to be productive to pose the prescribed curvature problem in a given conformal or pointwise conformal class. %When a pointwise conformal class is given by a representative $g$, the existence of a metric $g'=e^u g$ realizing a given function $\kappa$ as its curvature is equivalent to the resolution of the equation
%$$\frac{1}{2}\Delta_g u=\kappa-\kappa_g e^u.$$

A fundamental special case is given by the celebrated Koebe--Poincar\'e uniformization theorem~\cite{SG}, which states in our language that every Riemannian metric is simply conformally equivalent to a unique up to scaling metric of constant curvature. Due to~(\ref{gb}), the sign of the curvature is the same as the sign of the Euler characteristic. Moreover, the claim can be strengthened to the conformal equivalence except the uniqueness claim in the case of 2-sphere. The 2-sphere is the only surface, for which the conformal group is bigger than the isometry group of a constant curvature metric, so the uniqueness in a conformal class holds up to M\"obius transformation.

The prescribed curvature problem in a given simple conformal class for surfaces was fully solved by Kazdan and Warner~\cite{KW1, KW2}, who showed that the sign condition dictated by (\ref{gb}) is the sufficient condition. They also gave necessary and sufficient conditions for a resolution of this problem in a given conformal class for non-positive Euler characteristic. 
The case of the conformal class for the standard 2-sphere is known as \emph{the Nirenberg problem} and remains to be unresolved in general despite many partial progress, we refer to~\cite{And} as to one of the most recent advancements. 
The uniqueness of a solution in a conformal class is an interesting problem. We mention that in the case of a negative Euler characteristic and a negative curvature function, the solution is unique due to the maximum principle~\cite{KW1}.

A significant amount of work was done to understand the prescribed curvature problem for so-called \emph{cone-metrics} on surfaces. We do not give a general intrinsic definition, but they can be thought as gluings of triangles, which are endowed with a Riemannian metric and have geodesic boundary away from the vertices. The resulting surface may have isolated singularities at the vertices if the total angle turns out to be different from $2\pi$. Such a singular point is called a \emph{cone-point}. The metric is smooth away from cone-points and one can define its (simple) conformal class in the same way as above. Cone-points are atomic points of the curvature measure with the value $2\pi$ minus the total angle. For a cone-metric $d$ with the set of cone-points $V$ we say that its \emph{discrete curvature} $\kappa(d) \in (-\infty, 2\pi)^V$ is the data of the curvatures at the cone-points.

Of the main interest are cone-metrics of constant curvature outside the cone-points, which were actively studied by Alexandrov in order to describe intrinsically the geometry of polyhedral surfaces in 3-dimensional space-forms, see~\cite{Ale}. Since then they became a central object in the field of \emph{discrete differential geometry}. After scaling we may assume that the curvature outside the cone-points is 1, 0 or $-1$. We naturally call such cone-metrics \emph{spherical, Euclidean} and \emph{hyperbolic} respectively.

Most of the study of the prescribed curvature problem for cone-metrics deals with prescribing the discrete curvature and looking for constant curvature outside the singularities. If we fix a conformal structure outside of singularities (but extending over the singularities), then this can be seen as the uniformization of punctured surfaces with prescribed behaviour at the punctures. The interest to this setting goes back to Picard (see, e.g.,~\cite{Pic}), who investigated some cases of the prescribed discrete curvature problem in a conformal class of hyperbolic cone-metrics. A complete result (see~\cite{Hei, McO}) states that if $V \subset S$ is a finite set and $\kappa: V \rightarrow (-\infty, 2\pi)$ is a function such that
$$\sum_{v \in V}\kappa_v>2\pi\chi(S),$$
then in each conformal class on $S \backslash V$, extending to a conformal structure over $S$, there exists a unique hyperbolic cone-metric realizing $\kappa$ as its discrete curvature. A similar result for Euclidean cone-metrics was established by Troyanov~\cite{Tro1}. In~\cite{Tro2} Troyanov also studied some cases when the curvature varies away from cone-points.

The curvature prescription problem for spherical cone-metrics exhibits curious difficulties, especially when $S$ is the 2-sphere. In the latter case it turns out that the Gauss--Bonnet condition is not the only necessary condition. For instance, in the case of \emph{convex} spherical cone-metrics (i.e., when the discrete curvature $\kappa \in (0, 2\pi)^V$) 
Luo and Tian~\cite{LT} proved the following result:

\begin{thm}
\label{LT}
Let $S$ be the 2-sphere, $V \subset S$ be a finite set of at least three points, $S$ be endowed with a conformal structure and $\kappa$ be in $(0,2\pi)^V$. Then there exists a spherical cone-metric $d$ on $(S,V)$ such that

(a) the conformal structure induced by $d$ on $S\backslash V$ coincides with the given one, and

(b) $d$ has the discrete curvature $\kappa$,

if and only if
\begin{equation}
\label{e1}
\sum_{v\in V}\kappa_v<4\pi
\end{equation}
and
\begin{equation}
\label{e2}
\kappa_v<\sum_{w\neq v}\kappa_w\text{, for any $v\in V$}.
\end{equation}
Further, such $d$ is unique if it exists.
\end{thm}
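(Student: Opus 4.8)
The plan is to treat the two implications separately: the necessity of (\ref{e1})--(\ref{e2}) is metric geometry, whereas existence and uniqueness are proved together by showing that a natural ``curvature and conformal class'' map is a homeomorphism. For necessity, equation (\ref{e1}) is the Gauss--Bonnet theorem for the cone-metric, $\sum_{v\in V}\kappa_v+\area(S,d)=2\pi\chi(S)=4\pi$, together with positivity of the area. For (\ref{e2}) I would use that a spherical cone-metric with all cone angles in $(0,2\pi)$ is an Alexandrov surface of curvature $\ge 1$, so $\diam(S,d)\le\pi$. Fixing $v\in V$ with cone angle $\theta_v=2\pi-\kappa_v$, the tangent cone at $v$ is the Euclidean cone of angle $\theta_v$, whose spherical suspension is the ``spindle'' of area $2\theta_v$; a Bishop--Gromov comparison based at $v$ with this spindle as model gives $\area(B(v,r))\le\theta_v(1-\cos r)$ for all $r\le\pi$, hence $\area(S,d)\le 2\theta_v$, with equality only if $(S,d)$ is the spindle itself. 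Since $|V|\ge 3$ the inequality is strict, and $\area(S,d)<2\theta_v$ is, via Gauss--Bonnet, exactly $\kappa_v<\sum_{w\ne v}\kappa_w$.

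For existence and uniqueness, let $\mathcal A$ be the set of convex spherical cone-metrics on $S$ with $n$ labelled cone-points, up to label-preserving isometry. By Alexandrov's realization and uniqueness theorem $\mathcal A$ is identified with the space of convex polytopes in $\S^3$ with $n$ labelled vertices modulo $SO(4)$, hence is a manifold of dimension $3n-6$. Let $\Phi\colon\mathcal A\to\mathcal M_{0,n}\times\R^n$ send a metric to its conformal class on $S\backslash V$ (extended over $V$) and to its curvature. By the necessity part $\Phi$ takes values in $\mathcal M_{0,n}\times\mathcal K$, where $\mathcal K\subset(0,2\pi)^n$ is the convex, hence connected, region cut out by (\ref{e1})--(\ref{e2}), and $\dim(\mathcal M_{0,n}\times\mathcal K)=2(n-3)+n=3n-6=\dim\mathcal A$. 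Theorem~\ref{LT} amounts to the assertion that $\Phi\colon\mathcal A\to\mathcal M_{0,n}\times\mathcal K$ is a homeomorphism, which I would obtain by showing that $\Phi$ is (i) a local homeomorphism, (ii) proper, and (iii) of degree one.

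For (i): a vector in $\ker d\Phi$ is a deformation $\dot d=2\dot u\,d$ fixing the conformal class and all $\kappa_v$, so $\dot u$ is a bounded function solving the linearized Liouville equation $(\Delta_d+2)\dot u=0$; integrating shows $\dot u\perp 1$, so $2$ would be a nonzero eigenvalue of $-\Delta_d$, whereas by the Lichnerowicz--Obata estimate (still valid through conical singularities of angle $\le 2\pi$) the first nonzero eigenvalue of $(S,d)$ exceeds $2$, since $(S,d)$ is not the round sphere. For (ii): if $\Phi(d_k)$ converges in $\mathcal M_{0,n}\times\mathcal K$ then $d_k$ has curvature $\ge 1$, diameter $\le\pi$, and area bounded away from $0$ and $4\pi$, so it Gromov--Hausdorff subconverges; the only degeneration to exclude is a collision of cone-points through a conformally long neck, but such a neck would carry a convex spherical metric with no cone-points, while spherical cylinders of curvature $\ge 1$ are uniformly short, so the conformal class would degenerate, contradicting convergence in $\mathcal M_{0,n}$ (on the polytope side this is Blaschke selection, with the limit still an $n$-vertex polytope). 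For (iii): $\Phi$ is then a covering of some finite degree $k$ over a connected manifold, so it suffices to find a point with one preimage. Fixing the points $p_v$ and letting $\kappa\to 0$, fibre metrics have area $\to 4\pi$ with curvature $\ge 1$, hence converge to the round sphere; writing such a metric as $e^{2u}g_*$ with $g_*$ round and running a Lyapunov--Schmidt reduction along the $3$-dimensional kernel of $\Delta_{g_*}+2$ (the infinitesimal M\"obius directions), the limiting reduced equation is the balancing condition $\sum_v\kappa_v\,p_v=0$ for a M\"obius normalization of the configuration, which by the Douady--Earle theorem has a unique nondegenerate solution exactly when no $\kappa_v$ exceeds $\frac{1}{2}\sum_w\kappa_w$, i.e.\ exactly under (\ref{e2}). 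Hence the fibre over a small $\kappa$ is a single point, $k=1$, and $\Phi$ is a homeomorphism, yielding both existence and uniqueness.

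The main obstacle is step (ii): the compactness of convex spherical cone-metrics when the conformal class and curvature stay in a fixed compact set, and in particular the neck/bubble analysis, where the curvature lower bound (convexity) is precisely what rules out a conformally macroscopic bubble. The fine Lyapunov--Schmidt analysis near the round sphere in step (iii), and its identification with the conformal barycenter, is the other delicate point.
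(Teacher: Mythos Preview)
The paper does not contain a proof of Theorem~\ref{LT}: it is quoted as a result of Luo and Tian (with existence also attributed to Troyanov), and the paper's own contribution is its discrete analogue, Theorem~\ref{main}. The only argument the paper supplies for Theorem~\ref{LT} is a short sketch of the necessity of~(\ref{e2}): for a spherical triangle the exterior angles are the side-lengths of the polar triangle and hence satisfy the triangle inequality, and the general case follows by induction on~$|V|$, the induction step being to cut along a geodesic joining two cone-points and glue in a bigon containing a single interior cone-point, reducing~$|V|$ by one.

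Your necessity argument for~(\ref{e2}) via a Bishop--Gromov comparison based at a cone-point is genuinely different from the paper's inductive sketch, and it is attractive: it yields the inequality in one stroke and makes the equality case (the spindle) transparent. The only care needed is that the comparison you invoke is the \emph{relative} one, where the model at~$v$ is the spherical suspension of the space of directions (a circle of length~$\theta_v$) rather than the round sphere; this does hold for Alexandrov spaces of curvature~$\ge 1$, but it is worth stating precisely rather than calling it ``Bishop--Gromov''.

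For existence and uniqueness there is nothing in the paper to compare against. Your outline --- a continuity/degree argument with local injectivity from a spectral gap, properness from compactness of curvature~$\ge 1$ metrics, and a degree computation via Lyapunov--Schmidt near the round sphere --- is broadly in the spirit of the original Luo--Tian argument and is a reasonable plan. As written, however, several steps are placeholders rather than proofs: the Lichnerowicz--Obata inequality on cone-surfaces (with the correct self-adjoint extension of~$\Delta$ compatible with fixing the cone angles) is not automatic and needs a reference or an argument; your properness step is heuristic (``spherical cylinders of curvature~$\ge 1$ are uniformly short'' does not by itself rule out cone-points colliding while the conformal modulus stays bounded, and the appeal to Blaschke selection on the polytope side needs the vertex count to be preserved in the limit); and the identification of the reduced equation with the conformal barycenter condition, while plausible, requires an actual computation. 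None of these is an error of strategy, but each would need substantial work to make into a proof.
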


The existence part of Theorem~\ref{LT} was also established in~\cite{Tro2}. The current paper is devoted to a discrete version of Theorem~\ref{LT} formulated in the next subsection. 

The necessity of condition~(\ref{e1}) is given by the Gauss--Bonnet theorem. We can, however, see that it is not the only condition already on the example of a spherical (geodesic) triangle: together with the Gauss-Bonnet condition, its exterior angles also obey the triangle inequality, as they are the side-lengths of the polar triangle. (It is interesting to observe how this situation contrasts the Euclidean and the hyperbolic settings.) The condition~(\ref{e2}) is a manifestation of this effect, and can be easily proven by induction on the number of cone-points. (The induction step can be done by cutting the metric along a geodesic segment connecting two cone-points, and gluing there a bigon with one cone-point in its interior.)

In the non-convex case it was not until recently when Mondello and Panov~\cite{MP1} obtained nearly complete necessary and sufficient conditions on the curvatures of spherical cone-metrics on the 2-sphere (the study of last remaining cases was completed by Eremenko~\cite{Ere1}). They also treated the case of surfaces of higher genus in~\cite{MP2}, where they interestingly showed that the Gauss--Bonnet condition is actually sufficient.

It is worth to mention that a solution of the prescribed discrete curvature problem in a simply conformal class of spherical cone-metrics on the 2-sphere is non-unique in general. It is conjectured, however, that the number of solutions is finite. We refer to the recent survey of Eremenko~\cite{Ere}. A well-studied case is when all $\kappa_v(d)$ are multiples of $2\pi$, so the metric is a branched covering of the standard 2-sphere. The answer on the number of solutions for a generic position of points was given by Scherbak~\cite{Sch}. 

\subsection{Discrete conformality of cone-metrics}

The purpose of discrete differential geometry is to discretize the notions from smooth differential geometry in order to obtain analogues of classical results. Ideally, the existence results in the discrete setting should imply the existence results in the smooth setting by means of some kind of approximation. It frequently happens that the discretization of various equivalent notions gives rise to non-equivalent discrete objects.

Discretization of conformal geometry has a long history mainly focused on conformal maps between domains in the complex plane, especially on establishing a discrete version of the Riemann mapping theorem. The most well-known approach is via \emph{circle packings} as proposed by Thurston. However, 
circle packings do not work that well for discretizing the notion of conformal equivalence between Riemannian metrics on surfaces, since it was often unclear how to construct a circle packing on a general Riemannian surface.

Let $S$ be a closed surface, $d$ be a Euclidean cone-metric on $S$, $V$ be a finite point set containing all cone-points of $d$ and $\mathcal T$ be a geodesic triangulation of $d$ with vertices at $V$. (In what follows we will say that $d$ is a Euclidean cone-metric on $(S, V)$ realizing $\mathcal T$.) Then $d$ is uniquely determined by the lengths of the edges of $\mathcal T$. One can attempt to discretize the notion of pointwise conformal equivalence by considering metrics $d'$ defined by the same data $(S, V, \mathcal T)$ and satisfying
\begin{equation}
\label{conftemp}
l_e(d')=\exp\left(\frac{u_v+u_w}{2}\right)l_e(d),
\end{equation}
for each edge $e$ of $\mathcal T$ with endpoints $v$ and $w$, where $l_e(d)$, $l_e(d')$ are the lengths of $e$ in $d$, $d'$ and $u: V \rightarrow \R$ is some function. One can say that such $d'$ is \emph{discretely conformally equivalent} to $d$ \emph{with respect to $\mathcal T$}. This is motivated by the fact that if a Riemannian metric tensor changes by $e^{2u}$, then the distances change due to~(\ref{conftemp}) up to the third order.

This definition of discrete conformality first occurred in~\cite{RW} in an attempt to discretize some aspects of the theory of relativity. It was rediscovered by Luo~\cite{Luo} in his development of a combinatorial Yamabe flow. In~\cite{BPS} Bobenko, Pinkall and Springborn showed a surprising connection of this discrete conformal change with the change of decorations on some associated hyperbolic cusp-metric. Finally, in~\cite{GLSW} Gu, Luo, Sun and Wu gave a novel definition of discrete conformality, which extends (\ref{conftemp}), but does not employ an arbitrary triangulation. 

Each Euclidean cone-metric has a canonical \emph{Delaunay decomposition} into convex polygons, which is generically a triangulation. The \emph{discrete conformal class} in the sense of~\cite{GLSW} is generated by conformal changes of the type (\ref{conftemp}) performed on Delaunay triangulations, and by diagonal flips among Delaunay triangulations for metrics, which admit several of them. (An exact definition is given in Definition~\ref{confdefe}.) An evidence of the validity of this definition is given by the fact that it is coherent with the connection developed in~\cite{BPS}: all metrics obtained this way have the same associated hyperbolic cusp-metric. In the present paper we work in the setting of discrete conformality proposed in~\cite{GLSW}.

In~\cite{GLSW} the authors resolved completely the discrete curvature prescription problem in the new setting, demonstrating the same behaviour as in the smooth setting. They showed

\begin{thm}
\label{pce}
Let $S$ be a closed surface, $V \subset S$ be a finite set of points, $d$ be a Euclidean cone-metric on $(S, V)$ and $\kappa \in (-\infty, 2\pi)^V$ be a function satisfying 
%\begin{equation}
$$\sum_{v \in V}\kappa_v=2\pi\chi(S).$$
%\end{equation}
Then there exists a unique up to scaling Euclidean cone-metric $d'$ on $(S, V)$ that is discretely conformal to $d$ and has the discrete curvature $\kappa$.
\end{thm}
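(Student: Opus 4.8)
The plan is to recast Theorem~\ref{pce} as a variational problem, following Luo's combinatorial Yamabe program and the hyperbolic reinterpretation of \cite{BPS} systematized in \cite{GLSW}. Normalize $d$ to have $\area(d)=1$ and put $n=|V|$. By the definition of the discrete conformal class (Definition~\ref{confdefe}), a discretely conformal metric is built from a Delaunay triangulation of a discretely conformal metric via the length rule \eqref{conftemp}, with diagonal flips whenever the Delaunay condition forces them; \cite{GLSW} show that this yields, for \emph{every} $u\in\R^V$, a well-defined Euclidean cone-metric $d_u$ on $(S,V)$ with $d_{u+c\mathbf 1}$ a rescaling of $d_u$. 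Hence, up to scaling, the discrete conformal class of $d$ is parametrized by $u$ in the hyperplane $\mathcal W:=\{u\in\R^V:\sum_v u_v=0\}$. Writing $K(u)=(K_v(d_u))_{v\in V}$ for the discrete curvature, discrete Gauss--Bonnet gives $\sum_v K_v(u)=2\pi\chi(S)$ and positivity of cone angles gives $K_v(u)<2\pi$, so $K$ maps $\mathcal W$ into the open convex polyhedron $\Omega:=\{\kappa\in\R^V:\kappa_v<2\pi\ \forall v,\ \sum_v\kappa_v=2\pi\chi(S)\}$, and Theorem~\ref{pce} is exactly the statement that $K\colon\mathcal W\to\Omega$ is a bijection.

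Next I would invoke the variational principle: there is a $C^2$ function $\mathcal E\colon\R^V\to\R$ with $\nabla\mathcal E(u)=K(u)$. On a fixed triangulation this is Luo's explicit function assembled from the Milnor--Lobachevsky function, and \cite{GLSW} prove that the two expressions attached to the triangulations on either side of a Delaunay wall agree to second order, so $\mathcal E$ is globally $C^2$; its Hessian is a positive multiple of the discrete conformal Laplacian $\mathcal L(u)$, the weighted graph Laplacian of the current Delaunay triangulation with the nonnegative cotangent weights forced by the Delaunay property, hence $\mathcal E$ is convex and strictly convex along $\mathcal W$. Since $K(u+c\mathbf 1)=K(u)$, the function $\mathcal E$ has constant slope $2\pi\chi(S)$ in the $\mathbf 1$-direction, so the Gauss--Bonnet hypothesis is precisely what makes $E_\kappa(u):=\mathcal E(u)-\sum_v\kappa_v u_v$ descend to a well-defined function on $\mathcal W$. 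This $E_\kappa$ is strictly convex on $\mathcal W$ and its critical points are exactly the $u$ with $K(u)=\kappa$; so a solution, if it exists, is unique, which disposes of the uniqueness half of the theorem and, since strict convexity also makes $K$ a local diffeomorphism, will promote the final bijection to a diffeomorphism.

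There remains existence for every $\kappa\in\Omega$: that the strictly convex $E_\kappa$ attains its infimum on $\mathcal W$, equivalently, by convex analysis, that its recession function $E_\kappa^\infty(w)=\mathcal E^\infty(w)-\sum_v\kappa_v w_v$ is strictly positive on $\mathcal W\setminus\{0\}$. One computes $\mathcal E^\infty(w)=\lim_{t\to\infty}t^{-1}\mathcal E(tw)$ by analyzing the degeneration of $d_{tw}$: ordering the distinct entries of $w$ into ``levels'', triangles joining different levels collapse into thin slivers while each level carries a rescaled limiting Euclidean cone-metric, and feeding this in recursively produces a piecewise-linear formula for $\mathcal E^\infty$. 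Along the extremal direction where a single entry $w_v$ is positive and all others are equal and negative, $d_{tw}$ simply pushes the vertex $v$ off to infinity, its incident triangles degenerate to slivers, $K_v(d_{tw})\to 2\pi$, and $\mathcal E$ grows like $2\pi u_v$; the corresponding asymptotic inequality $E_\kappa^\infty(w)>0$ is, after unwinding, exactly $\kappa_v<2\pi$, and the remaining directions only yield weaker inequalities (group analogues $\sum_{v\in A}\kappa_v<2\pi|A|$ and the like). Thus $E_\kappa^\infty>0$ on $\mathcal W\setminus\{0\}$ precisely when $\kappa\in\Omega$; the unique minimizer $u^*$ then exists and $d_{u^*}$ is the metric sought.

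The main obstacle is carrying out this recession-function computation honestly over all of $\mathcal W$, that is, controlling every stratum of degenerations of the conformal factor and verifying that no stratum produces an obstruction stronger than $\kappa\in\Omega$. This is exactly the point where the hyperbolic-geometry picture of \cite{BPS} is most useful: there $\mathcal E$ becomes a sum of volumes of ideal hyperbolic tetrahedra, a degeneration of $d_u$ corresponds to a pinching of the associated decorated cusped hyperbolic surface, and the recession behavior can be read off from how the cusp neighbourhoods shrink; the estimate $K_v\to 2\pi$ for a vertex pushed to infinity becomes the statement that a cusp horocycle is driven out to the ideal boundary.
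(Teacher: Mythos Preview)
The paper does not prove Theorem~\ref{pce}; it is quoted verbatim as the main result of \cite{GLSW} and used later (via its restatement as Theorem~\ref{glsw}) as a black box in the proofs of Lemma~\ref{connects} and Lemma~\ref{connect}. So there is no ``paper's own proof'' to compare against.

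Your outline is essentially the strategy of \cite{GLSW} itself: parametrize the discrete conformal class by $u\in\R^V$ (this is the nontrivial cell-decomposition theorem of \cite{GLSW}, which you invoke rather than prove), exhibit a $C^2$ potential $\mathcal E$ with $\nabla\mathcal E=K$ whose Hessian is the cotangent Laplacian with nonnegative Delaunay weights, conclude strict convexity on the hyperplane $\mathcal W$, and then show coercivity of $E_\kappa$ to get existence. You correctly flag the genuine difficulty: the degeneration/recession analysis showing that $E_\kappa$ is proper on $\mathcal W$ exactly when $\kappa\in\Omega$. Your sketch of this step is heuristic rather than complete (the actual argument in \cite{GLSW} requires controlling how Delaunay triangulations degenerate along rays in $\R^V$ and is the technical heart of that paper), but the overall architecture is right and matches the cited source.
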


In a sequel paper~\cite{GGLSW} a similar result for hyperbolic cone-metrics was also established:

\begin{thm}
\label{pch}
Let $S$ be a closed surface, $V \subset S$ be a finite set of points, $d$ be a hyperbolic cone-metric on $(S, V)$ and $\kappa \in (-\infty, 2\pi)^V$ be a function satisfying 
%\begin{equation}
$$\sum_{v \in V}\kappa_v>2\pi\chi(S).$$
%\end{equation}
Then there exists a unique hyperbolic cone-metric $d'$ on $(S, V)$ that is discretely conformal to $d$ and has the discrete curvature $\kappa$.
\end{thm}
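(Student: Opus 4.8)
The plan is to adapt the variational strategy behind Theorem~\ref{pce} to the hyperbolic setting. Fix a geodesic triangulation $\mathcal T$ of $d$ with vertex set $V$, so that $d$ is recorded by the lengths $(l_{ij})$ of its edges. The discrete conformal deformations of $d$ carried by $\mathcal T$ are parametrized by a conformal factor $u\in\R^V$ via the hyperbolic length-change rule, which assigns to $u$ a collection of triangle side-lengths $\tilde l_{ij}(u)$; wherever these satisfy all triangle inequalities they assemble into a hyperbolic cone-metric $d_{\mathcal T}(u)$, with angle $\theta_i^{ijk}(u)$ of the hyperbolic triangle $ijk$ at $i$ and discrete curvature $\kappa_v(u)=2\pi-\sum\theta_v$. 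By Gauss--Bonnet one has $\sum_v\kappa_v(u)=2\pi\chi(S)+\area(d_{\mathcal T}(u))>2\pi\chi(S)$ for every $u$, which simultaneously establishes the necessity of the stated inequality and shows that it holds automatically along every deformation. The first substantive step is to check that on each triangle the $1$-form $\sum_i\theta_i^{ijk}\,du_i$ is closed, i.e. $\partial\theta_i^{ijk}/\partial u_j=\partial\theta_j^{ijk}/\partial u_i$, by explicit differentiation of the hyperbolic law of cosines (a Schl\"afli-type identity); this provides a smooth local potential $F_{ijk}(u_i,u_j,u_k)$ with $\partial F_{ijk}/\partial u_i=-\theta_i^{ijk}$.

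Fix the target $\kappa^{*}\in(-\infty,2\pi)^V$ and set
$$\mathcal E_{\mathcal T}(u)=\sum_{ijk}F_{ijk}(u)+\sum_{v\in V}(2\pi-\kappa^{*}_v)\,u_v,$$
so that $\nabla\mathcal E_{\mathcal T}(u)=(\kappa_v(u)-\kappa^{*}_v)_{v\in V}$ and the metrics we are after correspond exactly to the critical points of $\mathcal E_{\mathcal T}$. The analytic core is the convexity lemma: the Hessian of $\mathcal E_{\mathcal T}$, equal to the Jacobian $(\partial\kappa_v/\partial u_w)$, is symmetric and positive definite wherever the triangulation is nondegenerate. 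This reduces to a statement about a single hyperbolic triangle under conformal scaling of its three vertices, proven by an explicit computation in the spirit of the Euclidean case, and geometrically it expresses the fact that enlarging a hyperbolic triangle strictly decreases its angles. Consequently $\mathcal E_{\mathcal T}$ is strictly convex on its natural domain, which already yields uniqueness within a single triangulation chart.

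The essential difficulty — and the part I expect to be the main obstacle — is that a single triangulation covers neither the whole discrete conformal class nor all of $\R^V$: its admissible $u$-domain is a proper subset of $\R^V$. Following~\cite{BPS,GLSW} and its hyperbolic counterpart~\cite{GGLSW}, one extends the angle functions continuously beyond this domain by assigning to a triangle that violates the triangle inequality the degenerate angles $(\pi,0,0)$, and resolves configurations failing the Delaunay condition by diagonal flips, the two chart formulas agreeing across a flip by the hyperbolic Penner--Ptolemy relation (the analog of the decorated-hyperbolic correspondence of~\cite{BPS}). This produces a single convex $C^1$ function $\mathcal E$ on all of $\R^V$, independent of the auxiliary triangulations, restricting to the smooth strictly convex $\mathcal E_{\mathcal T}$ on the interior of each chart.

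It remains to prove existence, i.e. that $\mathcal E$ is proper, $\mathcal E(u)\to+\infty$ as $\|u\|\to\infty$. In contrast with the Euclidean functional of Theorem~\ref{pce}, here there is no scaling direction along which the gradient vanishes, and this is precisely where the hypothesis $\sum_v\kappa^{*}_v>2\pi\chi(S)$ enters: analyzing the asymptotics of the $F_{ijk}$ (equivalently of the angles) along a ray $u=t\,x$, $t\to+\infty$, and splitting $V$ according to the sign of the entries of $x$, the surplus $\sum_v\kappa^{*}_v-2\pi\chi(S)$ forces the linear term $\sum_v(2\pi-\kappa^{*}_v)u_v$ to dominate, so $\mathcal E(tx)\to+\infty$. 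Given properness, the convex $\mathcal E$ attains its minimum; one checks, as in the Euclidean case, that the minimizer $u^{*}$ lies in the interior of some chart and hence corresponds to a genuine hyperbolic cone-metric $d':=d_{\mathcal T}(u^{*})$ with $\kappa(d')=\kappa^{*}$, and strict convexity on that chart gives uniqueness. Necessity of the inequality is, as noted, immediate from Gauss--Bonnet.
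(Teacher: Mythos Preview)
The paper does not prove Theorem~\ref{pch}; it is quoted from~\cite{GGLSW} as background (with an alternative 3-dimensional variational proof mentioned in~\cite{Pro1}), so there is no ``paper's own proof'' to compare against. Your outline is essentially the strategy of~\cite{GGLSW}: build a convex $C^1$ potential on all of $\R^V$ by gluing the per-triangulation Hilbert--Einstein functionals across Delaunay flips via the hyperbolic Ptolemy relation, and then argue existence from properness and uniqueness from strict convexity.

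That said, two steps of your sketch are where the real work lies and, as written, are not yet arguments. First, the properness claim: saying that ``the surplus $\sum_v\kappa^*_v-2\pi\chi(S)$ forces the linear term to dominate'' only handles the direction $x=\mathbf 1$; for a general ray $u=tx$ one must actually compute the asymptotic angle sums at vertices where $x_v$ is maximal, minimal, or intermediate (degenerate triangles contribute $0$ or $\pi$ in specific patterns), and combine these with the combinatorial Gauss--Bonnet identity to see that the directional derivative of $\mathcal E$ is eventually positive in \emph{every} direction. The condition $\kappa^*_v<2\pi$ for each $v$ is what is used for rays with some $x_v$ strictly maximal, not the global surplus. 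Second, ``one checks that the minimizer lies in the interior of some chart'' hides a genuine lemma: you must rule out that the minimum occurs at a degenerate configuration (a triangle with a zero angle), which in~\cite{GGLSW} is done by showing the extended functional is not differentiable there in a way compatible with being a minimum. Without these two pieces the proof is a plan rather than a proof.
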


In particular, the discrete uniformization theorems follow for surfaces of non-positive Euler characteristic: in each discrete conformal class of hyperbolic (resp. Euclidean) cone-metrics on a surface of genus at least two (resp. genus one) there exists a unique metric (resp. unique up to scaling) without cone-singularities.

In subsequent papers~\cite{GLW, LSW, LWZ, WZ} various types of convergence were examined. It was shown that this version of discrete uniformization allows to compute conformal maps in the Riemannian mapping theorem and the uniformization factors for Riemannian surfaces provided that the approximating triangulations are chosen carefully enough.

As outlined in~\cite{BPS}, the discrete uniformization in this setting is connected with the geometry of convex ideal polyhedral surfaces in the hyperbolic 3-space. More generally, it turns out that this notion of discrete conformality has a nice geometric interpretation in terms of hyperbolic cone-polyhedra. This viewpoint was explored in~\cite{Pro1}, where an alternative variational proof of Theorem~\ref{pch} was given. It was based on some previous papers~\cite{BI, Izm2, FI1, FI2}, which used cone-polyhedra to obtain results on convex realizations of cone-metrics and their rigidity. One can see a deep connection between these topics, which is further developed in the present paper. Another paper working out this connection is the recent paper~\cite{Spr} of Springborn. Its results imply, in particular, the discrete uniformization theorem for spherical cone-metrics on the 2-sphere, although it is not explicitly stated in the text. In~\cite{Nie2} Nie gives another 3-dimensional interpretation of discrete conformality by generalizing and exploring the classical Epstein--Penner convex hull construction.

Except the discrete uniformization nothing is currently known about discrete conformality of spherical cone-metrics. By analogy with the smooth setting, one should naturally expect a more complicated behavior from this case. In this paper we make a step further and establish a discrete analogue of Theorem~\ref{LT}.

\begin{thm}
\label{main}
Let $S$ be the 2-sphere, $V \subset S$ be a finite set of at least three points, $ d$ be a spherical cone-metric on $(S,V)$ that admits a triangulation into convex spherical triangles, and
$\kappa\in(0,2\pi)^V$.
Then there exists a spherical cone-metric $d'$ on $(S,V)$ that is discretely conformal to $d$ and has the discrete curvature $\kappa$, if and only if (\ref{e1}) and (\ref{e2}) hold.
Further, such $d'$ is unique if it exists.
\end{thm}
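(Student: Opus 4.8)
The plan is to follow the now-standard variational strategy used in~\cite{GLSW, GGLSW, Pro1}, adapted to the spherical setting via a three-dimensional interpretation. First I would fix a discrete conformal class of the metric $d$ and parametrize its members by the conformal factor $u \in \R^V$ (modulo the Delaunay flips, which one handles by checking that the relevant energy functional is $C^1$ across flips and concave in each Delaunay chart, so that the pieces assemble into a globally well-defined concave function — this is the technical device inherited from~\cite{GLSW}). The necessity of~(\ref{e1}) and~(\ref{e2}) is already explained in the excerpt (Gauss--Bonnet for~(\ref{e1}); the bigon-gluing induction for~(\ref{e2})), so the real content is existence and uniqueness.

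For existence, I would introduce a concave functional $\mathcal E: \mathcal U \to \R$ on the space $\mathcal U$ of admissible conformal factors whose gradient is $\kappa(u) - \kappa$, where $\kappa(u)$ is the discrete curvature of the metric with factor $u$; critical points are exactly the desired metrics. The natural candidate is a Hilbert--Einstein-type functional coming from the volume of an associated convex hyperbolic (or anti-de Sitter / Minkowski-type) cone-polyhedron over the spherical cone-metric, in the spirit of~\cite{Pro1, Spr}; its concavity follows from the infinitesimal rigidity of such polyhedra (the Hessian is related to a second-variation-of-volume quadratic form controlled by the results of~\cite{BI, Izm2, FI1, FI2}). Once concavity is in hand, existence of a maximizer is reduced to showing that $\mathcal E$ is proper, i.e. that along any sequence escaping to infinity in $\mathcal U$ — either some $u_v \to -\infty$ (a cone-point "collapses") or the metric degenerates toward the boundary of the discrete conformal class — the functional tends to $-\infty$ unless~(\ref{e1}) and~(\ref{e2}) are satisfied. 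This is where the spherical obstructions enter: the boundary behavior must exactly reproduce the two inequalities, so one needs a careful analysis of the limiting spherical cone-metrics, presumably by showing that a degenerating family either splits off a bigon (forcing~(\ref{e2})) or develops total area $\to 4\pi$ (forcing~(\ref{e1})).

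The hardest part, I expect, is precisely this properness/compactness analysis — controlling the boundary of the moduli space of convex spherical cone-metrics within a fixed discrete conformal class and matching each type of degeneration to one of the inequalities~(\ref{e1})–(\ref{e2}). Unlike the Euclidean and hyperbolic cases, where the target curvature condition is a single linear (in)equality and the energy is automatically coercive in the right directions, here the spherical case has genuinely two competing constraints and the relevant convex bodies (spherical polyhedra, or their hyperbolic/de Sitter duals) can degenerate in several geometrically distinct ways. I would handle this by a dimension-reduction argument: cut along a geodesic joining two cone-points to realize the inductive structure already flagged for~(\ref{e2}), and separately bound the area via Gauss--Bonnet to get~(\ref{e1}); the subtle point is ensuring these cuts and limits stay compatible with discrete conformality (Delaunay decompositions may change in the limit).

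For uniqueness, concavity of $\mathcal E$ already gives uniqueness of the maximizer in a fixed discrete conformal class; strict concavity transverse to the trivial scaling direction (absent here, since spherical metrics have no scaling freedom) should follow from infinitesimal rigidity of the associated cone-polyhedra, so I would deduce that the critical point, hence the metric $d'$, is unique. Finally I would note that Theorem~\ref{LT} of Luo--Tian, combined with the fact that each classical conformal class contains metrics from many discrete conformal classes, is not what is being proved here — the statement is genuinely "within one discrete conformal class" — so no appeal to the smooth result is needed, though it serves as a consistency check.
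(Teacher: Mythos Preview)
Your proposal has a genuine gap at its very foundation: you assume the Hilbert--Einstein-type functional is concave on the space of conformal factors, and you derive both existence (via properness of a concave functional) and uniqueness (via strict concavity) from this. In the spherical setting this is simply false. The paper states this explicitly: ``in the hyperbolic and Euclidean settings the respective functional is strictly concave, which immediately implies the non-degeneracy. This behavior of the functional no longer holds in the spherical case.'' The icosahedron computation in Table~\ref{icosah} exhibits Hessians with eigenvalues of both signs, so no global concavity is available and your variational scheme collapses. Infinitesimal rigidity of the associated cone-polyhedra gives at best non-degeneracy of the Hessian, not definiteness; conflating the two is exactly what goes wrong.

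Because of this, the paper does \emph{not} prove the theorem by maximizing a functional. Instead it runs Alexandrov's continuity method: one shows that the curvature map $\kappa: U_C(d) \to K$ is a proper local diffeomorphism from a connected domain to a simply connected target, hence a global homeomorphism. Non-degeneracy of $d\kappa$ (the Hessian of $H$) is obtained by a dimensional-reduction argument: one rewrites $\ddot H(x,x)$ as a sum over horospherical links, each of which is a Euclidean cone-polygon whose associated bilinear form has signature $(+,-,\ldots,-)$ by Lemma~\ref{signature}; on the subspace $\dot\omega=0$ this form is negative semidefinite (Lemma~\ref{link}), and one concludes that $\ddot H(x,x)=0$ together with $\dot\kappa(x)=0$ forces $x=0$. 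Properness is a separate case analysis of degenerations (using Alexandrov's realization and rigidity theorems via Lemma~\ref{weakangle}), and connectivity is deduced from the Euclidean theory via the map $\Phi$ and a monotonicity trick along the direction $\mathbf 1$. None of these steps can be replaced by ``the functional is concave, so maximize it.''
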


By a convex spherical triangle we mean isometric to a geodesic triangle in the standard sphere with all angles in $(0, \pi)$. The condition on admitting a triangulation into convex spherical triangles is required to well-define a discrete conformal class. See the next section for more discussion on geodesic triangulations of spherical cone-metrics. We also note that due to the Gauss--Bonnet theorem, the 2-sphere is the only orientable surface admitting convex spherical cone-metrics.

Theorem~\ref{main} is proven by a continuity method popularized by Alexandrov, see~\cite{Ale}. We parametrize the discrete conformal class of $d$ by some domain $U(d) \subset \R^V$ and consider the curvature as a $C^1$-map $\kappa: U(d) \rightarrow (-\infty, 2\pi)^V$. Let $U_C(d) \subset U(d)$ be the subset of convex cone-metrics. We show that, when restricted to $U_C(d)$, the differential of $\kappa$ is non-degenerate, the map is proper and the domain is connected. Since the target is clearly simply connected, we obtain that $\kappa|_{U_C(d)}$ is a homeomorphism onto its image, which is the set of $\kappa\in(0,2\pi)^V$ satisfying (\ref{e1}) and (\ref{e2}).

This method was also used in the proofs of Theorems~\ref{pce} and~\ref{pch} in~\cite{GLSW, GGLSW}. The non-degeneracy of the differential is frequently based on the fact that it is equal to the Hessian of an auxiliary functional $H$. The main difference of our case from the previously resolved ones lies in the fact that in the hyperbolic and Euclidean settings the respective functional is strictly concave, which immediately implies the non-degeneracy. This behavior of the functional no longer holds in the spherical case, which brings many complications and is responsible for some flexibility phenomena specific to this case. 

In order to obtain the non-degeneracy in our situation, we give $H$ a geometric interpretation in terms of ideal cone-polyhedra, similarly as it was done in~\cite{Pro1}. This interpretation allows us to use dimensional-reduction arguments inspired by~\cite{BI, Izm2}.

A very interesting question is what is going on without the convexity assumption. First of all, if all $\kappa_v(u)$ are zero, that is, we have just the standard sphere with marked points, then $d\kappa$ has a 3-dimensional kernel. It comes from the possibility to move the interior point inside our cone-polyhedron. The kernel consists of the linear functions on the associated fan. This actually corresponds to a local non-rigidity of such ``metrics'' caused by M\"obius transformations that are not isometries. The same happens with branched coverings of the standard sphere, i.e., the case when all $\kappa_v$ are integer multiples of $2\pi$. 

We were unable to construct more non-trivial examples of global or local non-rigidity, although we conjecture that they exist, similarly to the smooth setting. We, however, detected an infinitesimal non-rigidity, which is not caused by the effects above.

\begin{table}
\centering
\scalebox{0.8}{
\begin{tabular}{|c||c|c|c|c|c|c|c|}
\hline
Inner angle & $2\pi/5$ & $2.4\pi/5$ & $2.8\pi/5$ & $3.2\pi/5$ & $3.6\pi/5$ & $4\pi/5$ & $4.4\pi/5$ \\
\hline
\hline
\multirow{12}{*}{Eigenvalues} & $-2.7751$ & $-8.2113$ & $-16.6180$ & $-29.1759$ & $-48.3430$ & $-80.5748$ & $-149.9030$ \\ 
\cline{2-8}
& $0.0000$ & $-3.3463$ & $-8.6839$ & $-16.7566$ & $-29.1077$ & $-49.7980$ & $-93.9818$ \\ 
\cline{2-8}
& $0.0000$ & $-3.3463$ & $-8.6839$ & $-16.7566$ & $-29.1077$ & $-49.7980$ & $-93.9818$ \\ 
\cline{2-8}
& $0.0000$ & $-3.3463$ & $-8.6839$ & $-16.7566$ & $-29.1077$ & $-49.7980$ & $-93.9818$ \\ 
\cline{2-8}
& $3.2492$ & $2.3498$ & $0.6055$ & $-2.2159$ & $-6.5865$ & $-13.7638$ & $-28.5081$ \\ 
\cline{2-8}
& $3.2492$ & $2.3498$ & $0.6055$ & $-2.2159$ & $-6.5865$ & $-13.7638$ & $-28.5081$ \\ 
\cline{2-8}
& $3.2492$ & $2.3498$ & $0.6055$ & $-2.2159$ & $-6.5865$ & $-13.7638$ & $-28.5081$ \\ 
\cline{2-8}
& $3.2492$ & $2.3498$ & $0.6055$ & $-2.2159$ & $-6.5865$ & $-13.7638$ & $-28.5081$ \\ 
\cline{2-8}
& $3.2492$ & $2.3498$ & $0.6055$ & $-2.2159$ & $-6.5865$ & $-13.7638$ & $-28.5081$ \\ 
\cline{2-8}
& $4.4903$ & $4.5256$ & $4.1538$ & $3.3382$ & $2.0158$ & $0.0000$ & $-3.4994$ \\
\cline{2-8}
& $4.4903$ & $4.5256$ & $4.1538$ & $3.3382$ & $2.0158$ & $0.0000$ & $-3.4994$ \\
\cline{2-8}
& $4.4903$ & $4.5256$ & $4.1538$ & $3.3382$ & $2.0158$ & $0.0000$ & $-3.4994$ \\
\hline
\end{tabular}
}
\caption{Eigenvalues of the Jacobian of $d\kappa$ for regular icosahedron-type metrics.}
\label{icosah}
\end{table}

Let $(S, V, \mathcal T)$ be the combinatorial structure of an icosahedron. Consider a family of spherical cone-metrics $d_t$ assigning equal lengths to all edges. We parametrize them by the value of an inner angle in a triangle of $\mathcal T$. We computed the eigenvalues of the Jacobian matrix of $d\kappa$ for some $d_t$, see Table~\ref{icosah}. We see the expected 3-dimensional kernel for the angles $2\pi/5$ and $4\pi/5$. Except this, a 5-dimensional kernel appears for some angle between $2.8\pi/5$ and $3.2\pi/5$. It is curious if there is a geometric explanation to this phenomenon.

If there are actual examples of non-uniqueness not coming from M\"obius transformations, we conjecture, similarly to the smooth setting, that the number of such solutions is finite. Finally, we emphasize that nothing is currently known about discrete conformality of spherical cone-metrics on closed surfaces other than the 2-sphere or the projective plane (which then necessarily contain a cone-point of negative discrete curvature). This is an interesting direction of future research.

We also would like to mention the related recent works~\cite{Nie, GHZ} considering the prescription problem of the so-called \emph{geodesic curvature} for circle patterns in spherical cone-metrics. It is interesting to note that the functional appearing in that problem is strictly convex, similarly to the Euclidean and hyperbolic cases. 

Another current direction of research in discrete conformality is developing the theory of \emph{decorated discrete conformal equivalence}, which incorporates together the vertex-scaling approach with the inversive distance circle packings. The work~\cite{BL} particularly establishes this theory in the context of spherical geometry.
\vskip+0.2cm

\textbf{Acknowledgments.} We thank the anonymous referees for useful comments.

\section{Preliminaries}

\subsection{Cone-metrics and Delaunay triangulations}

Suppose that $S$ is a closed orientable surface and $V \subset S$ is a finite point-set.

\begin{dfn}
\label{conedfn}
A \emph{spherical (resp. Euclidean) cone-metric} $d$ on $(S, V)$ is a metric on $S$ locally isometric to the standard sphere (resp. the Euclidean plane) except points of $V$. At $v \in V$ the metric $d$ is locally isometric to the metric of a spherical (resp. Euclidean) cone with total angle $\omega_v(d)$, which might be distinct from $2\pi$. A cone-metric $d$ is called \emph{convex} if for every $v \in V$ we have $\omega_v(d) \leq 2\pi$. 
Two cone-metrics on $(S, V)$ are considered equivalent if there is an isometry fixing $V$ and isotopic to the identity with respect to $V$.
\end{dfn}

The number $2\pi-\omega_v(d)$ is called the \emph{curvature} of $v$ in $d$. We will denote it by $\kappa_v(d)$ in the case of spherical cone-metrics and by $\delta_v(d)$ in the case of Euclidean cone-metrics. In this paper we are primarily interested in spherical cone-metrics, but at some steps we need also to use their connection with Euclidean cone-metrics. 

By $\mathfrak M_+(S,V)$ and $\mathfrak M_0(S, V)$ we denote the sets of equivalent classes of spherical and Euclidean cone-metrics respectively. In what follows by a cone-metric we sometimes mean an equivalence class of cone-metrics. 

We say that a spherical triangle is convex if any inner angle is in $(0,\pi)$ or, equivalently, the triangle is contained in an open hemisphere. Under this assumption, a spherical triangle is determined up to isometry by its edge lengths, and the space of possible edge lengths is
$$
\{(l_1,l_2,l_3)\in (0, \pi)^3 :l_1+l_2+l_3<2\pi, \text{$(l_1,l_2,l_3)$ satisfies the triangle inequality}\}.
$$
In particular, the length of each edge is less than $\pi$.

\begin{dfn}
\label{celldef}
A \emph{polygonal decomposition} of $(S, V)$ is a collection of simple disjoint arcs (called  \emph{edges}) with endpoints in $V$ that cut $S$ into topological polygons (called \emph{faces}) without points of $V$ in the interior. We require that each polygon has at least three vertices after being cut (although some of them may coincide as actual points of $V$). Two cell decompositions on $(S, V)$ are considered equivalent if they are isotopic with respect to~$V$.

A \emph{triangulation} of $(S, V)$ is a polygonal decomposition such that all faces are triangles.
\end{dfn}

Note that the definition above allows loops and multiple edges. We denote the set of edges of a triangulation $\mathcal T$ by $E(\mathcal T)$. In what follows by a polygonal decomposition or by a triangulation we sometimes mean an equivalence class of them. 

Given a cone-metric $d$ on $(S,V)$, we say that a triangulation $\mathcal T$ of $(S,V)$ is \emph{realized} by $d$ if there exists a geodesic triangulation of $(S, V)$ equivalent to $\mathcal T$ (which is then clearly unique). In the case of spherical cone-metrics we additionally assume that the geodesic triangulation consists only of convex triangles. 

For a triangulation $\mathcal T$ of $(S, V)$ by $\mathfrak M_+(\mathcal T) \subset \mathfrak M_+(S,V)$ and $\mathfrak M_0(\mathcal T) \subset \mathfrak M_0(S,V)$ we denote the subsets of cone-metrics realizing $\mathcal T$. Any cone-metric in $\mathfrak M_+(\mathcal T)$, $\mathfrak M_0(\mathcal T)$ is uniquely determined by its type (spherical or Euclidean), $\mathcal T$ and the edge lengths. Thus, $\mathfrak M_+(\mathcal T)$, $\mathfrak M_0(\mathcal T)$ can be parametrized as open convex polyhedra in $\R^{E(\mathcal T)}$:
\begin{multline*}
\mathfrak M_+(\mathcal T)=\{
l\in \mathbb (0,\pi)^{E(\mathcal T)}:\text{$l$ satisfies the triangle inequalities},\\ \text{the perimeter of each triangle is $<2\pi$}
\},
\end{multline*}
\begin{equation*}
\mathfrak M_0(\mathcal T)=\{
l\in \mathbb (0, \infty)^{E(\mathcal T)}:\text{$l$ satisfies the triangle inequalities}
\}.
\end{equation*}

\begin{rmk}
Not any spherical cone-metric $d$ on $(S,V)$ has a geodesic triangulation with vertex set $V$. For example, if $(S,d)$ is the unit sphere and $V$ is contained in an open hemisphere, then $(S,V,d)$ does not admit a geodesic triangulation. If $V$ belongs to a great circle, then $(S, V, d)$ admits a triangulation, but not into convex triangles. On the other hand, it is not hard to show that if for every point of $S$ there exists a point of $V$ at distance less than $\pi/2$, then $d$ admits a triangulation into convex triangles. For this one can adapt arguments from \cite[Proposition 3.1]{Thu}, \cite[Section 3]{Riv}.
\end{rmk}

By $\mathfrak M_{+, T}(S, V) \subset \mathfrak M_+(S, V)$ we denote the subset of spherical cone-metrics on $(S, V)$ admitting a geodesic triangulation into convex triangles. Due to the remark above,  $$\mathfrak M_{+, T}(S, V) \subsetneq \mathfrak M_+(S, V).$$ On the other hand, any Euclidean triangle is considered to be convex, and it is well-known that any Euclidean cone-metric on $(S, V)$ admits a geodesic triangulation. 

The families $\{\mathfrak M_+(\mathcal T)\}_{\mathcal T}$ and $\{\mathfrak M_0(\mathcal T)\}_{\mathcal T}$ provide open coverings of the spaces $\mathfrak M_{+, T}(S, V)$ and $\mathfrak M_0(S, V)$ respectively, and endow them with the structure of smooth manifolds. The discrete curvatures $\kappa$ and $\delta$ can be considered as smooth $\mathbb R^V$-valued functions on $\mathfrak M_{+, T}(S, V)$ and $\mathfrak M_0(S, V)$ respectively. 

We note that there are infinitely many isotopy classes of triangulations of $(S, V)$. Moreover, a Euclidean cone-metric may realize infinitely many triangulations: consider, e.g., the surface of the Euclidean unit cube. However, for spherical metrics the convexity condition does not allow this.

\begin{lm}
\label{finite}
Each metric $d \in \mathfrak M_{+, T}(S, V)$ realizes finitely many triangulations. 
\end{lm}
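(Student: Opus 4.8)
The plan is to bound the number of realized triangulations by first bounding the number of realized edges, i.e., the number of isotopy classes of geodesic arcs in $d$ between points of $V$ that can occur as an edge of some convex geodesic triangulation realized by $d$. Since every edge of such a triangulation has length less than $\pi$ (this is forced by convexity of the triangles, as recorded just before Definition~\ref{celldef}), it suffices to show that for each pair $v, w \in V$ there are only finitely many homotopy classes of arcs from $v$ to $w$ whose geodesic representative has length $< \pi$, and likewise only finitely many such loops at each vertex. Once the set of possible edges is finite, the number of triangulations built from them is trivially finite (a triangulation is determined by a subset of this edge set).

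First I would fix $v, w \in V$ and suppose, for contradiction, that infinitely many distinct isotopy classes of arcs from $v$ to $w$ have geodesic length $< \pi$. Let $L_0 < \pi$ be a length bound. The key geometric input is a lower bound on the total angle: $\omega_v(d) = 2\pi - \kappa_v(d)$, and since we are in $\mathfrak M_{+,T}(S,V)$ and the metric admits a convex triangulation, each cone angle is positive, but more usefully I want a bound on how many geodesics of bounded length can emanate from $v$ in distinct directions. The clean way is to pass to the universal-cover-type picture: develop a neighborhood of a shortest representative into the standard sphere $\S^2$. Two non-isotopic geodesic arcs from $v$ to $w$ of length $< \pi$ together bound a geodesic bigon (possibly after cutting along them); developing this bigon into $\S^2$, its two sides are geodesic arcs of length $< \pi$ with common endpoints. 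Two distinct great-circle arcs of length $< \pi$ in $\S^2$ with the same endpoints must actually coincide unless the endpoints are antipodal — so the bigon cannot be embedded and must wrap around cone-points, each contributing a definite amount of curvature. The standard Gauss--Bonnet accounting for such a bigon (area $= $ sum of two angles $-$ enclosed curvature, up to the usual signs) then shows that each ``new'' arc forces the developed picture to enclose extra curvature; since the total curvature $\sum_v \kappa_v(d) < 4\pi$ is finite and each cone angle is bounded away from $0$... here I need to be a little careful, because cone angles can in principle be small.

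The cleaner and more robust argument, which I expect to be the one that works, is a compactness/packing argument: arcs of length $< \pi$ from $v$ stay inside the closed metric ball $B(v, \pi)$. If infinitely many pairwise non-isotopic such arcs existed, one could extract a convergent subsequence of their initial directions at $v$ (the space of directions at $v$ is a circle of circumference $\omega_v(d)$, hence compact) and of their terminal directions at $w$; geodesics with nearby endpoints-data and bounded length that are close in this sense must eventually coincide as geodesics, hence be isotopic — because a geodesic in a cone-metric is locally determined by its initial point and direction, and can be continued uniquely until it hits a cone-point, and a length-$<\pi$ geodesic between two points of $V$ with no cone-point in its interior, being shorter than any ``first-return'' scale, is rigid under small perturbation of endpoint data. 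This contradiction gives finiteness of the edge set. The main obstacle, and the step deserving the most care, is precisely this local rigidity/uniqueness claim for geodesics between cone-points of length $< \pi$: one must rule out that a short perturbation of a geodesic arc can jump to a non-isotopic arc of still-bounded length by passing near a cone-point, and this is where the bound $\pi$ and convexity (which prevents cone-points of angle $\geq 2\pi$, so that geodesics do not ``focus'' pathologically) are used.

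Having bounded the edge set, I would finish by noting that any convex geodesic triangulation realized by $d$ has all its edges in this finite set $\mathcal{E}$, and is determined by the collection of its edges (as an isotopy class of triangulation), of which there are at most $2^{|\mathcal{E}|}$; hence finitely many. This completes the proof of Lemma~\ref{finite}.
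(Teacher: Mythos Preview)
Your approach is essentially the paper's: reduce to showing that between any two points of $V$ there are only finitely many simple geodesic arcs of length $<\pi$, then argue by compactness. The paper packages the compactness step via Arzel\`a--Ascoli on the arcs themselves (constant-speed parametrizations on $[0,1]$ are uniformly Lipschitz), and the key input is exactly the one you isolate: two distinct simple geodesic arcs of length $<\pi$ between the same points of $V$ are non-isotopic rel $V$, since otherwise they would bound a disk free of cone-points, which develops into $\S^2$ where two arcs of length $<\pi$ with common endpoints coincide. A convergent subsequence of such arcs would then have its terms eventually isotopic to the limit and hence to each other, a contradiction.

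One correction: you invoke ``convexity (which prevents cone-points of angle $\geq 2\pi$)'', but membership in $\mathfrak M_{+,T}(S,V)$ does \emph{not} assume convexity of the cone-metric---it only requires a triangulation into convex spherical triangles, and cone angles may well exceed $2\pi$. Fortunately this hypothesis is not needed anywhere in the argument: the non-isotopy of distinct short arcs and the compactness step go through for arbitrary cone angles, so you should simply drop that clause.
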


\begin{proof}
It is enough to show that between any two points $v, w \in V$ there are finitely many simple geodesic arcs of length $< \pi$. To this purpose we adapt the argument from~\cite[Proposition 1]{ILTC}. 

Note that any two different simple geodesic arcs of length $<\pi$ are non-isotopic on $S \backslash V$, so they must bound together a region with at least one point of $V$. Similarly, none of them is isotopic to an arc of length $\pi$. The family of all simple geodesic arcs from $v$ to $w$ of length at most $\pi$ is uniformly Lipschitz. Thus, by the Arzel\`a--Ascoli theorem it is compact. Hence, if there are infinitely many arcs of length $<2\pi$, we can extract a convergent subsequence. But we saw that no arc can appear in the limit, so we get a contradiction. 
\end{proof}

Of special importance among triangulations are the well-known \emph{Delaunay triangulations}.

\begin{dfn}
We say that a triangulation $\mathcal T$ realized by a spherical (resp. Euclidean) cone-metric $d$ is \emph{Delaunay} in $d$ if when we develop each pair of the adjacent triangles to the standard sphere (resp. the Euclidean plane), then the circumscribed disk of each triangle does not contain the opposite vertex of the other triangle in the interior.
\end{dfn}

A Delaunay triangulation is a refinement of a more general \emph{Delaunay decomposition}. 

\begin{dfn}
A \emph{Delaunay decomposition} of a spherical (resp. Euclidean) cone-metric on $(S, V)$ is a polygonal decomposition such that each cell is isometric to a circumscribed convex polygon in the standard sphere $\S^2$ (resp. in the Euclidean plane $\E^2$) and for each pair of adjacent cells after we develop them in $\S^2$ (resp. $\E^2$) the closed circumscribed disk of each polygon does not contain the vertices of the other one except the vertices of the adjacent edge. In the spherical case by a \emph{convex polygon} we mean that all angles are at most $\pi$ and it is contained in an open hemisphere.
\end{dfn}

So in particular, if we take a Delaunay triangulation and erase all the edges such that adjacent triangles form a cyclic quadrilateral, then we obtain a Delaunay decomposition. Conversely, any triangulation refining a Delaunay decomposition is Delaunay. It is well-known that a Delaunay decomposition exists for every Euclidean cone-metric and is actually unique, see~\cite{BS, ILTC}. We need to check that this is also true for spherical cone-metrics.

\begin{lm}
\label{delaun}
Let $d \in \mathfrak M_{+, T}(S, V)$. A Delaunay decomposition of $d$ exists and is unique.
\end{lm}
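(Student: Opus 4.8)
The plan is to mimic the classical argument for Euclidean cone-metrics, using the standard ``empty circumscribed disk'' flipping procedure, but being careful about the spherical constraints (convexity of polygons, the open-hemisphere condition, edge lengths $<\pi$). Since $d \in \mathfrak M_{+, T}(S, V)$, by definition $d$ realizes at least one triangulation $\mathcal T_0$ into convex spherical triangles, and by Lemma \ref{finite} it realizes only finitely many triangulations altogether. I would first establish \emph{existence} as follows. Starting from $\mathcal T_0$, define the \emph{local Delaunay} condition on an edge $e$ shared by two triangles: developing the two triangles into $\S^2$, the circumscribed disk of one does not contain the opposite vertex of the other in its interior. If some edge $e$ fails this condition, the union of the two adjacent triangles develops to a convex spherical quadrilateral contained in an open hemisphere (here one must check that the failure of the empty-disk condition forces the developed quadrilateral to be convex and to lie in an open hemisphere — this is the spherical analogue of the Euclidean fact and follows from the circumscribed-disk inequality together with the assumption that each triangle lies in an open hemisphere), so we may perform the diagonal flip, replacing $e$ by the other diagonal $e'$, and the result is again a geodesic triangulation into convex triangles. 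One shows this flip strictly decreases a suitable monotone quantity (e.g.\ the sum over triangles of some function of the circumradius, or — following the standard reference — the total ``Delaunay energy''), or, more cheaply, simply invokes Lemma \ref{finite}: there are only finitely many triangulations, and a flip never returns to a previously seen triangulation because it strictly changes the relevant energy; hence the flipping process terminates at a triangulation $\mathcal T$ in which every edge is locally Delaunay. A locally Delaunay triangulation is globally Delaunay (the empty-disk property for each triangle against \emph{all} vertices follows from the pairwise conditions by the usual local-to-global argument for Delaunay triangulations on a metric surface), and erasing every edge whose two adjacent triangles form a cyclic quadrilateral yields a Delaunay decomposition, as noted in the text.

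For \emph{uniqueness}, suppose $\mathcal D_1$ and $\mathcal D_2$ are two Delaunay decompositions of $d$. Refine each to a Delaunay triangulation $\mathcal T_1$, $\mathcal T_2$. It suffices to show any two Delaunay triangulations are related by flips \emph{within} the cyclic-quadrilateral edges, since then they determine the same decomposition. The key point is the \emph{empty-disk characterization}: an edge $e$ with endpoints $v,w$ lies in some Delaunay triangulation of $d$ if and only if there is a geodesic disk in the development whose boundary circle passes through $v$ and $w$ and which contains no vertex of $V$ in its interior; such an edge is ``forced'' (in every Delaunay triangulation) when the circle can be chosen with no vertex even on the boundary besides $v,w$, and is ``optional'' (flippable) exactly when some empty circle through $v,w$ also passes through two further vertices forming a cyclic quadrilateral. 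This dichotomy is metric, not combinatorial, so the set of forced edges is the same for $\mathcal T_1$ and $\mathcal T_2$; these forced edges cut $S$ into the cells of the common decomposition, each cell being a cyclic polygon, and within each cyclic polygon all triangulations are Delaunay and flip-equivalent. Hence $\mathcal D_1 = \mathcal D_2$.

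The main obstacle I anticipate is not the combinatorial bookkeeping but verifying the \emph{spherical geometric lemmas} underlying the flip step: namely (i) that when the local Delaunay condition fails on $e$, the developed union of the two triangles is a convex quadrilateral inside an open hemisphere, so that the flipped diagonal is again a legitimate convex-triangle edge with length in $(0,\pi)$ and perimeters $<2\pi$; and (ii) that a locally Delaunay triangulation is globally Delaunay in the spherical setting. Both are spherical analogues of facts that are standard in the Euclidean plane, and the cleanest route is to transfer them via central projection to a suitable affine chart, or to use the classical fact that on the sphere the ``empty circumscribed disk'' condition is again equivalent to a lifting condition (here, to convexity of the lift of the triangulation to the boundary of a convex body, e.g.\ by projecting to the paraboloid-type model adapted to $\S^2$), which makes monotonicity under flips transparent. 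Once these two geometric ingredients are in place, the argument is the same finite flipping/termination argument as in \cite{BS, ILTC}, with Lemma \ref{finite} guaranteeing termination.
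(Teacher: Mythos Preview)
Your approach is essentially the intrinsic route the paper explicitly declines to take: immediately after stating Lemma~\ref{delaun} the authors write ``Instead of adapting the arguments from~\cite{BS}, we will prove Lemma~\ref{delaun} \ldots\ with the help of our main tool: \emph{ideal cone-polyhedra}.'' So your outline is a legitimate alternative, not a wrong approach.

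The paper's proof differs from yours as follows. For \emph{existence} both arguments run a flip algorithm and invoke Lemma~\ref{finite} for termination; the difference is the monotone quantity. You leave this vague (``some function of the circumradius'' or an unspecified ``Delaunay energy''), whereas the paper uses the hyperbolic volume of the ideal cone-polyhedron $P(d,\mathcal T)$: by Lemma~\ref{delaunay} a non-Delaunay edge has dihedral angle $>\pi$, and flipping it strictly increases $\vol P(d,\mathcal T)$. This gives a concrete strictly monotone functional for free, and simultaneously handles your obstacle~(i): the fact that the two adjacent triangles form a convex spherical quadrilateral is read off from the dihedral angle being reflex. For \emph{uniqueness} the arguments diverge more substantially. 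You use the empty-disk dichotomy (forced versus optional edges) to show that two Delaunay triangulations refine the same decomposition; this requires making precise the notion of empty immersed circumscribed disks on a spherical cone-surface and checking the local-to-global step (your obstacle~(ii)). The paper instead compares the distance functions $\mu,\mu'$ from the boundary to the marked point in $P(d,\mathcal T)$ and $P(d,\mathcal T')$, uses convexity to obtain $\mu=\mu'$ at crossing points of edges, and concludes that the two polyhedra are marked isometric, so both triangulations refine the common face decomposition.

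What each buys: your route is more elementary and self-contained, but the two geometric lemmas you flag as obstacles are genuine work in the spherical setting and are not supplied. The paper's route requires building the ideal cone-polyhedron machinery, but since that machinery is needed throughout the paper anyway (for Lemmas~\ref{triang}, \ref{firstder}, \ref{secder}, and the infinitesimal rigidity), the proof of Lemma~\ref{delaun} comes essentially for free once Lemma~\ref{delaunay} and Lemma~\ref{face} are in place, and it avoids having to verify the spherical analogues of the Bobenko--Springborn lemmas from scratch.
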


Instead of adapting the arguments from~\cite{BS}, we will prove Lemma~\ref{delaun} in Section~\ref{conpol} with the help of our main tool: \emph{ideal cone-polyhedra}.

Note that although a Euclidean cone-metric may realize infinitely many triangulations, it is clear from the uniqueness of the Delaunay decomposition that it has only finitely many Delaunay triangulations. Moreover, both in Euclidean and in spherical cases any two Delaunay triangulations can be connected by a sequence of flips via Delaunay triangulations.

%\begin{proof}
%Let $\mathcal C$ be a Delaunay decomposition of $d$ and $C$ be its face. Realize $C$ on the standard sphere $\S^2$, let $D$ be its closed circumscribed disk. Now start develop other cells of $\mathcal C$ until $D$ gets fully covered and each developed cell has a common point with $\inter(D)$ in the image. Clearly, due to the compactness of $S$, $d$ is a complete metric and this is possible.
%
%It is easy to show by induction that due to the empty ball condition, $D$ does not contain any other vertices of the development except the initial vertices of $C$. Thus, we can pullback $D$ and get its isometric immersion in $(S, d)$ so that no vertices belong to the image except the vertices of $C$.
%\end{proof}

By $\mathfrak D_+(\mathcal T)\subset \mathfrak M_+(\mathcal T)$, $\mathfrak D_0(\mathcal T)\subset \mathfrak M_0(\mathcal T)$ we denote the spaces of cone-metrics realizing $\mathcal T$ as a Delaunay triangulation. We have the decompositions

$$
\mathfrak M_{+, T}(S,V)=\bigcup_{\mathcal T}\mathfrak D_+( \mathcal T),~~~~~\mathfrak M_0(S,V)=\bigcup_{\mathcal T}\mathfrak D_0( \mathcal T).
$$
One can see that these decompositions are locally finite, each cell $\mathfrak D_+( \mathcal T)$ or $\mathfrak D_0( \mathcal T)$ is closed, has non-empty interior and piecewise-smooth boundary, and any two cells have disjoint interiors. 

%The following lemma is standard
%
%\begin{lm}
%$$
%\mathfrak M_{+, T}(S,V)=\bigcup_{\mathcal T}\mathfrak D_+( \mathcal T),~~~~~\mathfrak M_0(S,V)=\bigcup_{\mathcal T}\mathfrak D_0( \mathcal T).
%$$
%\end{lm}
%
%\begin{proof}[Proof sketch in the spherical case.]
%Consider $d \in \mathfrak M_T(S, V)$ and its triangulation $\mathcal T$ into convex triangles. We need to show that $d$ admits a Delaunay triangulation. Suppose that $\mathcal T$ is not Delaunay. Then there exist two adjacent convex triangles $T_1$, $T_2$ from $\mathcal T$ such that a vertex $T_2$ belongs to the interior of the circumscribed disk of $T_1$ when developed to the standard sphere. Then they form a convex quadrilateral and we can flip their common edge creating two convex triangle satisfying the Delaunay condition. 
%
%One needs to show that a triangulation does not appear twice in this process. This can be shown with the help of the distance function from the next section. 
%
%As there are finitely many triangulations of $(S, V)$, the process terminates.
%\end{proof}

\subsection{Discrete conformal equivalence}

\begin{dfn}
\label{confdefs}
We say that $d$ and $d'$ in $\mathfrak M_{+, T}(S, V)$ are \emph{discretely conformally equivalent} if there are a sequence of metrics $d=d_1,d_2,\ldots,d_m=d'$ and a sequence of triangulations $ \mathcal T_1,\ldots, \mathcal T_m$ of $(S,V)$ such that

(a) $d_i\in\mathfrak D_+(\mathcal T_i)$, and

(b) if $\mathcal T_i= \mathcal T_{i+1}$, then there is a function $u: V \rightarrow \R$ such that for each edge $e$ of $\mathcal T$ with endpoints $v$ and $w$ we have
\begin{equation}
\label{conformals}
\sin\left(\frac{l_e(d_{i+1})}{2}\right)=\exp\left(\frac{u_v+u_w}{2}\right)\sin\left(\frac{l_e(d_i)}{2}\right),
\end{equation}
where $l_e(d)$ is the length of $e$ in $d$; and

(c) if $\mathcal T_i\neq \mathcal T_{i+1}$, then $d_i=d_{i+1}$.
\end{dfn}

We denote the discrete conformal class of $d \in \mathfrak M_{+, T}(S, V)$ by $\mathfrak C_+(d)$.

\begin{dfn}
\label{confdefe}
We say that $d$ and $d'$ in $\mathfrak M_0(S, V)$ are \emph{discretely conformally equivalent} if there are a sequence of metrics $d=d_1,d_2,\ldots,d_m=d'$ and a sequence of triangulations $ \mathcal T_1,\ldots, \mathcal T_m$ of $(S,V)$ such that

(a) $d_i\in\mathfrak D_0(\mathcal T_i)$, and

(b) if $\mathcal T_i= \mathcal T_{i+1}$, then there is a function $u: V \rightarrow \R$ such that for each edge $e$ of $\mathcal T$ with endpoints $v$ and $w$ we have
\begin{equation}
\label{conformale}
l_e(d_{i+1})=\exp\left(\frac{u_v+u_w}{2}\right)l_e(d_i),
\end{equation}
where $l_e(d)$ is the length of $e$ in $d$; and

(c) if $\mathcal T_i\neq \mathcal T_{i+1}$, then $d_i=d_{i+1}$.
\end{dfn}

We denote the discrete conformal class of $d \in \mathfrak M_0(S, V)$ by $\mathfrak C_0(d)$.

Let $d \in \mathfrak D_0(\mathcal T)$. Then clearly for any real $\lambda > 0$ the metric $d_{\lambda}$ obtained from $d$ by multiplying all the edge lengths of $\mathcal T$ by $\lambda$ belongs to $\mathfrak D_0(\mathcal T) \cap \mathfrak C_0(d)$. This defines an equivalence relation on $\mathfrak C_0(d)$. We denote the set of equivalence classes by $\tilde{\mathfrak C}_0(d)$. It is clear that for all $\lambda > 0$ we have $\delta(d_{\lambda})=\delta(d)$. Thus, we may consider $\delta$ as a function over $\tilde{\mathfrak C}_0(d)$.

Note that due to the Gauss--Bonnet theorem, the image of $\delta$ lies in the set
$$\Delta=\{\delta \in (-\infty, 2\pi)^V: \sum_{v \in V}\delta_v = 4\pi \} \subset \R^V.$$
The main result of~\cite{GLSW} is the following.
\begin{thm}
\label{glsw}
$\mathfrak C_0(d)$ is $C^1$-diffeomorphic to $\mathbb R^V$, and
the map
$$\delta: \tilde{\mathfrak C_0}(d) \rightarrow \Delta$$
is a $C^1$-diffeomorphism. 
\end{thm}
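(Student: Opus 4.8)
The plan is to reduce Theorem~\ref{glsw} to a known theorem about decorated ideal hyperbolic surfaces, following the approach of Bobenko--Pinkall--Springborn. The first step is to set up the Penner-type coordinates: to a Euclidean cone-metric $d \in \mathfrak M_0(\mathcal T)$ one associates a decorated complete hyperbolic metric on $S \setminus V$ with a cusp at each $v \in V$, where the decoration is a choice of horocycle at each cusp. Concretely, each Euclidean triangle of $\mathcal T$ is replaced by an ideal hyperbolic triangle, and the gluing data together with the decoration is recorded by the \emph{lambda-lengths} $\lambda_e$ of the edges; one checks that $\lambda_e^2 = \ell_e(d)^2 / (\text{something involving the horocyclic data at the two endpoints})$, so that a conformal change~(\ref{conformale}) with factor $u$ corresponds exactly to shifting the $v$-th horocycle by $u_v$ and leaves the underlying decorated hyperbolic surface unchanged. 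This identifies $\mathfrak C_0(d)$ with (a slice of) the Penner decorated Teichm\"uller space of the fixed topological punctured surface, and the flip move (c) corresponds to a Ptolemy relation among lambda-lengths.

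The second step is to identify which decorated hyperbolic surface arises and to show all of decorated Teichm\"uller space of the one underlying surface is reached, i.e. that $\mathfrak C_0(d)$ is exactly one fiber of the forgetful map from decorated to undecorated Teichm\"uller space. Here one uses the classical fact (Penner) that the lambda-lengths $(\lambda_e)_{e \in E(\mathcal T)}$ for a fixed ideal triangulation $\mathcal T$ give a global real-analytic coordinate system on decorated Teichm\"uller space, with the positive orthant $\R_{>0}^{E(\mathcal T)}$ as image; and that changing $\mathcal T$ by a flip is a real-analytic change of coordinates (the Ptolemy transformation). Since the hyperbolic cone-metric associated to $d$ is independent of the triangulation and of the conformal changes performed, and since every point of decorated Teichm\"uller space is represented by positive lambda-lengths in \emph{some} ideal triangulation, we get that $\mathfrak C_0(d)$, which is the union over $\mathcal T$ of the Delaunay cells $\mathfrak D_0(\mathcal T)$ decorated appropriately, maps bijectively onto the decoration fiber. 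The decoration fiber is $\R^V$ (one real parameter, the horocycle position, per cusp), and the transition maps being real-analytic (in fact smooth) diffeomorphisms gives the $C^1$-diffeomorphism $\mathfrak C_0(d) \cong \R^V$.

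The third step handles the curvature map. After passing to $\tilde{\mathfrak C}_0(d)$ we quotient by overall scaling, which on the decorated side is the simultaneous scaling of all lambda-lengths; the residual $\R^V$ becomes $\R^V / \R \cdot(1,\ldots,1) \cong \R^{V-1}$, matching the dimension of $\Delta$. That $\delta$ is a local diffeomorphism is the non-degeneracy of its differential, which one obtains by exhibiting $\delta$ (up to an affine change) as the gradient of a strictly convex function $H$ on $\tilde{\mathfrak C}_0(d)$: in the Euclidean case $H$ is, up to sign, the sum over triangles of the Milnor--Lobachevsky/Legendre-transform-type function, and its strict convexity follows from the strict concavity of the volume of a decorated ideal tetrahedron (or directly from the formula for the second derivative of the triangle's contribution, whose Hessian is negative semidefinite with the expected one-dimensional kernel killed by the scaling quotient). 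Properness of $\delta: \tilde{\mathfrak C}_0(d) \to \Delta$ is shown by a degeneration analysis: if a sequence in $\tilde{\mathfrak C}_0(d)$ leaves every compact set, then in whichever Delaunay triangulation one works some lambda-lengths (suitably normalized) tend to $0$ or $\infty$, and one checks the angles degenerate so that $\delta$ approaches $\partial \Delta$ (some $\delta_v \to 2\pi$, using $\sum \delta_v = 4\pi$ fixed). Combining: $\delta$ is a proper local diffeomorphism from a connected manifold onto the connected, simply connected $\Delta$, hence a diffeomorphism.

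The main obstacle I expect is the second step: verifying that the union of Delaunay cells $\bigcup_{\mathcal T} \mathfrak D_0(\mathcal T)$, with the BPS decoration attached, surjects onto the full decoration fiber and that the gluing of Penner coordinate charts along flip walls matches the gluing defining the smooth structure on $\mathfrak C_0(d)$. This requires knowing that every point of decorated Teichm\"uller space admits a Delaunay (``geometric'') ideal triangulation with the prescribed decoration — essentially the statement that the Delaunay cells tile decorated Teichm\"uller space — together with the compatibility of the two a priori different smooth structures (one from $\{\mathfrak M_0(\mathcal T)\}$, one from Penner charts) across the codimension-one flip loci; both are standard but somewhat technical, and would occupy the bulk of a careful write-up.
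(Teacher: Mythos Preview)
The paper does not prove Theorem~\ref{glsw}; it is quoted verbatim as ``the main result of~[GLSW]'' and used as a black box. So there is no in-paper proof to compare against. Your sketch is broadly in line with the strategy of~[GLSW] and the BPS/Springborn interpretation the paper alludes to: identify the discrete conformal class with the space of decorations on a fixed hyperbolic cusp-metric (so $\mathfrak C_0(d)\cong\R^V$ via horocycle heights), realize $\delta$ as the gradient of a convex functional to get local injectivity, and prove properness by a degeneration analysis. For the purposes of this paper, simply citing~[GLSW] is what is expected; a full write-up of your outline would essentially reproduce that paper.
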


Let $\mathcal T$ be a triangulation of $(S, V)$. Given $l\in\mathbb R^{E(\mathcal T)}$, we define $\sin(l/2)\in\mathbb R^{E(\mathcal T)}$ as
$\sin(l/2)_e=\sin (l_e/2)$ for each $e \in E(\mathcal T)$. Let $T$ be a convex spherical triangle and $l \in (0, \pi)^{E(T)}$ be the 3-tuple of its side lengths. Consider it on the unit 2-sphere $\S^2$ embedded in the Euclidean 3-space. Then the Euclidean triangle subtending $T$ on $\S^2$ has lengths $2\sin(l/2)$. This means that if $d \in \mathfrak D_+(\mathcal T)$ and $l \in (0, \pi)^{E(\mathcal T)}$ is the vector of its edge lengths, then $2\sin(l/2)$ determines a Euclidean cone-metric, which we denote by $\Phi_{\mathcal T}(d)$. This defines a map
$$\Phi_{\mathcal T}: \mathfrak D_+(\mathcal T) \rightarrow \mathfrak M_0(\mathcal T).$$
The image of $\Phi_{\mathcal T}$ consists exactly of those cone-metrics in $\mathfrak M_0(\mathcal T)$ having the circumradius of each triangle of $\mathcal T$ less than 1.

By this construction, it is easy to see that if $d\in\mathfrak M_{+,T}(S,V)$ realizes more than one Delaunay triangulations, the image of $d$ 
does not depend on the choice of the map $\Phi_{\mathcal T}$. So the maps $\Phi_{\mathcal T}$ give an injective map
$$\Phi: \mathfrak M_{+,T}(S, V) \rightarrow \mathfrak M_0(S, V).$$

Consider two adjacent triangles in the unit sphere. They satisfy the Delaunay condition if and only if the dihedral angle between the subtending Euclidean triangles is at most $\pi$ when measured inwards the sphere. By unbending them we see that this corresponds to the Delaunay condition between the Euclidean triangles. This shows

\begin{lm}
\label{phi1}
The metric $d \in \mathfrak D_+(\mathcal T)$ if and only if $\Phi(d) \in \mathfrak D_0(\mathcal T)$.
\end{lm}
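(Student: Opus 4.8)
The plan is to show both directions of the equivalence by reducing the Delaunay condition on the spherical side to the Delaunay condition on the Euclidean side through an explicit three-dimensional picture. The key observation, already set up in the paragraph preceding the statement, is that a convex spherical triangle $T$ sitting on the unit sphere $\S^2 \subset \R^3$ subtends a Euclidean triangle (the planar triangle through its three vertices) with edge lengths $2\sin(l/2)$, and this planar triangle is exactly one face of the Euclidean cone-metric $\Phi(d)$. So $\Phi$ is nothing but the operation of replacing each spherical triangular face by its chordal Euclidean triangle.

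First I would fix an edge $e$ of $\mathcal T$ and the two triangles $T_1, T_2$ of $\mathcal T$ adjacent along $e$, and develop them into $\S^2$; call $T_1', T_2'$ the corresponding chordal Euclidean triangles, which are the two faces of $\Phi(d)$ adjacent along the image edge $e'$. The Delaunay condition for $\mathcal T$ in $d$ at $e$ says that the spherical circumscribed disk of $T_1$ on $\S^2$ does not contain in its interior the vertex of $T_2$ opposite to $e$, and symmetrically. The Delaunay condition for $\mathcal T$ in $\Phi(d)$ at $e'$ says the same with spherical disks replaced by Euclidean disks (circumscribed to $T_1', T_2'$ in their common developed plane). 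The core geometric claim I need is: these two conditions are equivalent. I would prove this via the intermediate quantity of the dihedral angle. Unfold the two chordal triangles $T_1', T_2'$ about their common edge $e'$ into the plane; as one bends them back to their positions on $\S^2$, the dihedral angle along $e'$, measured on the side facing the center of the sphere, is some angle $\theta \in (0, 2\pi)$. Bending stops exactly when both apex vertices land on $\S^2$. The standard fact (the same one invoked in the remark just before Lemma~\ref{phi1} about ``unbending'') is that the four coplanar points $T_1' \cup T_2'$ form a convex cyclic quadrilateral inscribed in a circle — equivalently the Euclidean Delaunay condition holds in both directions — precisely when $\theta = \pi$; that $\theta < \pi$ (the bent configuration is ``convex toward the center'', forming part of the boundary of the convex hull of the four points together with the center) corresponds to the strict Euclidean Delaunay inequality; and that the spherical Delaunay condition at $e$ is likewise governed by the sign of $\theta - \pi$, since a vertex lies inside/on/outside the spherical circumcircle of the opposite chordal triangle according to which side of the plane of that triangle it falls on, which is controlled by whether the dihedral angle exceeds $\pi$.

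Concretely, the cleanest route: a point $p \in \S^2$ is inside, on, or outside the spherical circumscribed disk of a spherical triangle $\sigma$ (with chordal plane $\Pi$) if and only if $p$ lies on the same side of $\Pi$ as the relevant cap, on $\Pi$, or on the far side — because the spherical circumcircle of $\sigma$ is exactly $\Pi \cap \S^2$. Applying this with $\sigma = T_1$, $p$ the apex of $T_2$, and $\Pi$ the plane of $T_1'$, the spherical Delaunay inequality at $e$ becomes a statement about which halfspace of $\mathrm{plane}(T_1')$ contains $\mathrm{apex}(T_2')$; unbending $T_1' \cup T_2'$ flat, this is manifestly the Euclidean in-circle test for $T_1'$ against $\mathrm{apex}(T_2')$. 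Doing this symmetrically for $T_2$ gives both inequalities, and hence $d \in \mathfrak D_+(\mathcal T) \iff \Phi(d) \in \mathfrak D_0(\mathcal T)$. I should also record that $\Phi(d)$ genuinely lies in $\mathfrak M_0(\mathcal T)$ to begin with — that each $T_i'$ is a nondegenerate Euclidean triangle — which is immediate since $T_i$ is a convex spherical triangle (contained in an open hemisphere, so its three vertices are affinely independent). The main obstacle, and the only place where care is needed, is pinning down the equivalence ``spherical in-circle test at $e$ $\iff$ sign of (dihedral angle $-\pi$) $\iff$ Euclidean in-circle test at $e'$'' without hand-waving — in particular making sure the ``side of the sphere'' and ``side of the plane'' conventions line up so that the inequalities point the same way on both sheets, and handling the boundary (equality) case so that the closed-disk formulations in the two Delaunay definitions correspond correctly. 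Everything else is the bookkeeping of developing adjacent triangles, which I would not spell out in full.
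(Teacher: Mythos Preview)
Your proposal is correct and follows essentially the same approach as the paper: both reduce the spherical Delaunay condition at an edge to the condition that the dihedral angle between the two chordal Euclidean triangles (measured on the side of the center of the sphere) is at most $\pi$, and then observe that after unbending this is precisely the Euclidean Delaunay condition for $\Phi(d)$. Your write-up is considerably more explicit about why the spherical in-circle test is equivalent to a half-space test relative to the chordal plane (via $\Pi \cap \S^2$ being the spherical circumcircle) and about the sign conventions and equality case, whereas the paper dispatches the whole thing in two sentences, but the underlying argument is the same.
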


From this and the definitions we get

\begin{lm}
\label{phi2}
Two metrics $d$ and $d'$ are discretely conformally equivalent if and only if $\Phi(d)$ is discretely conformally equivalent to $\Phi(d')$.
\end{lm}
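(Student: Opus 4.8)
The approach is to transport discrete conformal chains through the map $\Phi$ in both directions, using Lemma~\ref{phi1} and the defining identity $l_e(\Phi_{\mathcal T}(d))=2\sin(l_e(d)/2)$ for $d\in\mathfrak D_+(\mathcal T)$.

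For the ``only if'' direction, given a chain $d=d_1,\dots,d_m=d'$ with triangulations $\mathcal T_i$ as in Definition~\ref{confdefs}, I would set $e_i:=\Phi_{\mathcal T_i}(d_i)$, which equals $\Phi(d_i)$ since $\mathcal T_i$ is a Delaunay triangulation of $d_i$ and $\Phi$ is independent of that choice. By Lemma~\ref{phi1} each $e_i$ lies in $\mathfrak D_0(\mathcal T_i)$; a type-(b) step for the $d_i$ becomes a type-(b) step for the $e_i$ with the same $u$, by multiplying~(\ref{conformals}) by $2$; and a type-(c) step ($d_i=d_{i+1}$) trivially gives $e_i=e_{i+1}$. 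Hence $(e_i,\mathcal T_i)$ witnesses $\Phi(d)\sim\Phi(d')$.

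The ``if'' direction is the same translation run backwards, but now a Euclidean chain $\Phi(d)=e_1,\dots,e_m=\Phi(d')$ may pass through metrics $e_i$ outside the image of $\Phi$ (some triangle having circumradius $\ge 1$), so the pullback is not immediately defined. The remedy is to rescale: for any $\lambda>0$ the data $(\lambda e_i,\mathcal T_i)$ is again a valid Euclidean chain, since rescaling preserves the Delaunay property, preserves type-(c) steps, and preserves type-(b) steps with the same $u$; it joins $\lambda\Phi(d)$ to $\lambda\Phi(d')$. As there are only finitely many $e_i$, each with finitely many triangles, one can choose $\lambda\in(0,1)$ small enough that every $\lambda e_i$ has all its $\mathcal T_i$-triangles of circumradius $<1$, hence lies in the image of $\Phi_{\mathcal T_i}$; write $\lambda e_i=\Phi(f_i)$ with $f_i\in\mathfrak D_+(\mathcal T_i)$, using Lemma~\ref{phi1}. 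Pulling this rescaled chain back through $\Phi$ (Lemma~\ref{phi1} again; injectivity of $\Phi$ for type-(c) steps; dividing~(\ref{conformals}) by $2$ for type-(b) steps) yields $\Phi^{-1}(\lambda\Phi(d))\sim\Phi^{-1}(\lambda\Phi(d'))$ in $\mathfrak M_{+,T}(S,V)$.

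Finally, I would observe that $d$ is spherically conformal to $f_1=\Phi^{-1}(\lambda\Phi(d))$: both realize $\mathcal T_1$ (for $d$ this follows from $e_1\in\mathfrak D_0(\mathcal T_1)$ via Lemma~\ref{phi1}), and their edge half-sines differ by the constant factor $\lambda$, which is exactly a type-(b) step with $u\equiv\log\lambda$; likewise $d'\sim f_m$ via $\mathcal T_m$. Concatenating $d\sim\Phi^{-1}(\lambda\Phi(d))\sim\Phi^{-1}(\lambda\Phi(d'))\sim d'$ and invoking transitivity of discrete conformal equivalence completes the argument. The only step requiring an idea beyond unwinding the definitions is the rescaling trick, which is forced on us precisely because the image of $\Phi$ is a proper (open) subset of $\mathfrak M_0(S,V)$.
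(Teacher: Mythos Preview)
Your proof is correct and fills in exactly the details the paper elides: the paper merely asserts that Lemma~\ref{phi2} follows ``from this and the definitions'' after Lemma~\ref{phi1}, without writing anything out. Your observation that the ``if'' direction requires the rescaling trick (to force the intermediate Euclidean metrics into the image of $\Phi$) is the one genuine point of content, and it is precisely the device the paper itself deploys a few lines later in the proof of Lemma~\ref{connects}; so your argument is fully in the spirit of the paper, just made explicit.
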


Now we show
\begin{lm}
\label{connects}
The space $\mathfrak C_+(d)$ is a connected manifold of dimension $|V|$ without boundary.
\end{lm}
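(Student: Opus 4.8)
The plan is to transport the question through the map $\Phi$ to the Euclidean setting, where the corresponding statement is essentially Theorem~\ref{glsw}, and then pull back the manifold structure. By Lemma~\ref{phi2}, $\Phi$ maps $\mathfrak C_+(d)$ bijectively onto its image inside $\mathfrak C_0(\Phi(d))$, and by Lemma~\ref{phi1} this image consists exactly of those metrics in $\mathfrak C_0(\Phi(d))$ all of whose Delaunay triangles have circumradius $<1$. Since $\mathfrak C_0(\Phi(d))$ is $C^1$-diffeomorphic to $\R^V$ by Theorem~\ref{glsw}, the first task is to identify $\Phi(\mathfrak C_+(d))$ as an open subset of it: circumradius is a continuous (indeed piecewise-smooth) function on $\mathfrak C_0(\Phi(d))$ once we fix a local Delaunay triangulation, so the condition ``all circumradii $<1$'' is open, hence $\Phi(\mathfrak C_+(d))$ is an open subset of a manifold diffeomorphic to $\R^V$, thus a manifold of dimension $|V|$ without boundary. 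Transporting this structure back through the bijection $\Phi$ gives that $\mathfrak C_+(d)$ is a manifold of dimension $|V|$ without boundary; one should note that $\Phi$ restricted to each cell $\mathfrak D_+(\mathcal T)$ is the smooth map $l \mapsto 2\sin(l/2)$, so the smooth structures are genuinely compatible and $\Phi$ is a diffeomorphism onto its image.

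The remaining and main point is connectedness of $\mathfrak C_+(d)$, equivalently of the open set $\Phi(\mathfrak C_+(d)) \subset \R^V$. Here I would argue directly in the Euclidean model: parametrize $\mathfrak C_0(\Phi(d)) \cong \R^V$ and within a fixed Delaunay cell $\mathfrak D_0(\mathcal T)$ note that scaling all edge lengths by $\lambda \to 0$ shrinks all circumradii to $0$, hence stays in $\Phi(\mathfrak C_+(d))$ for $\lambda$ small; more globally, the discrete conformal factor $u \in \R^V$ acts so that sending all $u_v \to -\infty$ uniformly corresponds to a global scaling (within any given Delaunay cell, (\ref{conformale}) multiplies all lengths by $e^{(u_v+u_w)/2}$, and a uniform shift of $u$ is exactly a global scaling), which drives all circumradii to $0$. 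So from any point of $\Phi(\mathfrak C_+(d))$ we can follow the ray $u \mapsto u - t\mathbf{1}$, $t \ge 0$: along it the metric stays discretely conformal, and I claim it eventually and then permanently lies in the ``all circumradii $<1$'' region, because along such a ray circumradii are controlled by the scaling factor and decrease to $0$. This exhibits a path from any point to a fixed ``small'' region, and that small region (metrics obtained from $\Phi(d)$ by strongly negative uniform $u$) is itself connected — indeed it is reached from $\Phi(d)$ by the straight segment $t \mapsto$ (uniform shift), provided $\Phi(d)$ itself has all circumradii $<1$, which it does since $d$ is convex.

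A subtlety I would need to handle carefully: the identification of a uniform shift of $u$ with global scaling only holds within one Delaunay cell, and along the ray $u \mapsto u - t\mathbf 1$ the Delaunay triangulation may change (one crosses walls $\mathfrak D_0(\mathcal T) \cap \mathfrak D_0(\mathcal T')$). So the monotonicity ``circumradii decrease along the ray'' is not literally a statement about one smooth formula. The clean way around this is: the quantity ``maximal circumradius over all triangles of the Delaunay decomposition'' is a well-defined continuous function on $\mathfrak C_0(\Phi(d))$, invariant-ish under the cell structure, and under a global scaling by $\lambda$ it scales by $\lambda$; since a uniform shift $u \to u - t\mathbf 1$ takes the metric with conformal factor $u$ to the one with factor $u - t\mathbf 1$, and the latter equals (the metric with factor $u$) globally scaled by $e^{-t}$ — this identity does hold metric-wise, independent of triangulation, because it is just rescaling the underlying cone-metric — the max circumradius along the ray is $e^{-t}$ times its initial value and hence tends to $0$ monotonically. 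So the ray argument is valid at the level of metrics even though the combinatorial type jumps, and the max-circumradius function witnesses that we enter and stay in $\Phi(\mathfrak C_+(d))$. With connectedness of $\Phi(\mathfrak C_+(d))$ established, together with the openness argument above, Lemma~\ref{connects} follows. The hard part is precisely this last bookkeeping — making the scaling/conformal-shift identity and the circumradius monotonicity rigorous across changes of Delaunay triangulation — rather than any deep new idea.
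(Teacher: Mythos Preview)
Your overall strategy---push everything through $\Phi$, use Theorem~\ref{glsw} to identify $\mathfrak C_0(\Phi(d))$ with $\R^V$, show $\Phi(\mathfrak C_+(d))$ is open via the circumradius condition, and exploit global scaling for connectedness---is exactly the paper's approach, and the openness/manifold part is fine.

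The connectedness argument, however, has a genuine gap. You follow the ray $u\mapsto u-t\mathbf 1$ from an arbitrary point and say this lands in a ``fixed small region'', which you describe as ``metrics obtained from $\Phi(d)$ by strongly negative uniform $u$''. But that small region is just the single scaling ray through $\Phi(d)$. The scaling ray through $\Phi(d')$ and the scaling ray through $\Phi(d'')$ are \emph{different} rays (in the $u$-parametrization they are distinct translates of the line $\R\mathbf 1$), and none of them meets the ray through $\Phi(d)$: a uniform shift never changes the projective class of the metric, so two non-homothetic metrics stay non-homothetic forever along these rays. Thus your argument connects each point to its own ray, but gives no bridge between the rays. (Also, the reason $\Phi(d)$ has all circumradii $<1$ is the construction of $\Phi$, not the convexity of $d$.)

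The paper closes exactly this gap with one extra step you are missing: first take \emph{any} path $d_t$ in $\mathfrak C_0(\Phi(d))\cong\R^V$ from $\Phi(d')$ to $\Phi(d'')$, arranged to cross only finitely many Delaunay cells; then choose a single $\lambda>0$ small enough that the circumradii along the \emph{whole} scaled path $\lambda\cdot d_t$ are $<1$ (possible because the max circumradius is continuous and the path is compact); finally join $\Phi(d')$ to $\lambda\Phi(d')$ and $\Phi(d'')$ to $\lambda\Phi(d'')$ by your scaling rays. So the scaling idea is right, but it must be applied to a connecting path, not just to the endpoints.
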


\begin{proof}
Since $\mathfrak C_0(\Phi(d))$ is homeomorphic to $\mathbb R^V$ by Theorem \ref{glsw} and $\Phi$ is continuous and injective, 
it suffices to show that $\Phi(\mathfrak C_+(d))$ is open and connected in $\mathfrak C_0(\Phi(d))$. $\Phi(\mathfrak C_+(d))$ contains all the metric $d'\in \mathfrak C_0(\Phi(d))$ where
the radius of any circumcircle of the Delaunay decomposition of $d'$ is smaller than $1$. So the openness is obvious.
Let $d', d'' \in \mathfrak C_+(d)$, we can connect $\Phi(d')$ and $\Phi(d'')$ by a path $d_t \subset \mathfrak C_0(\Phi(d))$. 
As the decomposition of $\mathfrak C_0$ induced by cells  $\mathfrak D_0(\mathcal T)$ is piecewise-smooth and locally finite, we can make $d_t$ intersecting the boundaries of this decompostion only finitely many times.
If we multiply a Euclidean cone-metric by $\lambda>0$, we stay in the same discrete conformal class and in the same cells. Thus, we can multiply all $d_t$ by sufficiently small $\lambda$ so that all the circumscribed circles of the Delaunay decompositions of $\lambda\cdot d_t$ become smaller than 1. By linearly connecting the corresponding end points of $d_t$ and $\lambda\cdot d_t$, we
obtain a path in $\Phi(\mathfrak C_+(d))$ connecting $\Phi(d')$ and $\Phi(d'')$.

\end{proof}

\subsection{Cusp-metrics}
In the study of the discrete conformality, it was necessary to introduce hyperbolic cusp-metrics. See also \cite{BPS, GLSW, GGLSW, Spr} for employing this connection to study the discrete conformality of Euclidean and hyperbolic cone-metrics.

\begin{dfn}
A \emph{hyperbolic cusp-metric} $g$ on $(S, V)$ is a complete hyperbolic metric of finite area on $S \backslash V$. 
\end{dfn}

Points of $V$ are called \emph{cusps} in a hyperbolic cusp-metric.

\begin{dfn}
A \emph{decoration} $h$ of $(S, V)$ with a hyperbolic cusp-metric $g$ is a choice of a horocycle at each cusp.
\end{dfn}

After we chose a decoration, we can speak about the distance between two cusps: we consider the signed distance between the corresponding horocycles taken with the minus sign if they intersect.
Although $g$ is actually a metric on the surface with punctures $S \backslash V$, we can still speak about realizations of triangulations with vertices at $V$. The edges are infinite geodesics approaching the cusps like ideal points of $\overline \H^2$. It is well-known that

\begin{lm}[\cite{Mar}, Proposition 7.4.6]
For each hyperbolic cusp-metric $g$ on $(S, V)$ each triangulation $\mathcal T$ of $(S, V)$ is realized by $g$.
\end{lm}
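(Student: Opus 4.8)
This is a classical fact in hyperbolic geometry, so I will only indicate the standard argument. The plan is to realize each edge of $\mathcal T$ by its geodesic representative and then to check that the resulting arcs are pairwise disjoint and embedded, so that the complementary regions are exactly the (now ideal) triangular faces of $\mathcal T$.

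Write $S\setminus V=\H^2/\Gamma$ for a torsion-free non-elementary discrete $\Gamma\leq\mathrm{PSL}(2,\R)$, so that each point of $V$ corresponds to a $\Gamma$-orbit of parabolic fixed points on $\partial\H^2$. An edge $e$ of $\mathcal T$ is an isotopy class of simple arc between two (possibly equal) points of $V$; lifting a representative gives a properly embedded path in $\H^2$ from one parabolic fixed point $p$ to another $q$. Since $e$ is an edge of a genuine triangulation it is essential -- it is not homotopic rel endpoints to a constant path, nor does it cut off a once-punctured disk -- and hence $p\neq q$. Replacing the lifted path by the complete geodesic with endpoints $p,q$ and taking the $\Gamma$-orbit gives a $\Gamma$-invariant family whose quotient is a geodesic arc $e^*$ isotopic to $e$, unique because $p$ and $q$ determine the geodesic. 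Normalizing so that a cusp is at $\infty$ with stabilizer generated by $z\mapsto z+1$, each such geodesic appears as a vertical half-line $\{x=c\}$, hence is embedded in a horoball neighborhood of the cusp.

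The key step is to show that, over all edges of $\mathcal T$, the arcs $e^*$ are pairwise disjoint and individually embedded, i.e.\ that passing to geodesic representatives creates no intersections out of the disjoint family $\{e\}$. This is the usual bigon argument: put $\bigcup_e e^*$ in general position and, using the isotopies $e\simeq e^*$, pick an innermost embedded disk bounded either by two sub-arcs of geodesic edges (a bigon) or by one sub-arc of a geodesic edge (a monogon); by Gauss--Bonnet such a disk has area $-\alpha-\beta<0$ or $-\pi-\alpha<0$, which is absurd. The only subtlety is at the cusps: two distinct geodesic edges running into the same cusp appear in the normalization above as vertical lines $\{x=c\}$ and $\{x=c'\}$ with $c-c'\notin\mathbb Z$ (else the edges would be isotopic), so they are disjoint in a small horoball neighborhood; hence all intersections among the $e^*$ lie in a fixed compact subsurface, where the innermost-disk argument applies verbatim. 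Consequently $\bigcup_e e^*$ is an embedded graph isotopic to the $1$-skeleton of $\mathcal T$.

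It follows that the closures of the components of $(S\setminus V)\setminus\bigcup_e e^*$ are isotopic to the faces of $\mathcal T$, hence are topological triangles with vertices in $V$; lifting such a component to $\H^2$, its boundary consists of three complete geodesics that pairwise share an ideal endpoint, so the component is an ideal triangle (two of whose vertices may well project to the same cusp, which is allowed by Definition~\ref{celldef}). Thus the geodesic edges cut $S\setminus V$ into ideal triangles, i.e.\ $\mathcal T$ is realized by $g$. The main obstacle in this argument is the innermost-disk step together with its verification near the cusps; everything else is formal. For a complete treatment see \cite[Proposition 7.4.6]{Mar}.
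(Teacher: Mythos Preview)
The paper does not supply its own proof of this lemma; it simply quotes the result and defers to \cite[Proposition~7.4.6]{Mar}. Your sketch is correct and is precisely the standard argument one finds in that reference: realize each edge by the unique complete geodesic joining the corresponding parabolic fixed points in the universal cover, then use the Gauss--Bonnet obstruction to geodesic bigons and monogons (together with the vertical-line picture near each cusp) to see that these geodesic representatives remain embedded and pairwise disjoint, so that the complementary pieces are ideal triangles. There is nothing to compare against here, and no gap in your outline worth flagging.
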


The reader can find more details about cusp-metrics in~\cite{Pen}. The basic aspects are also treated well in~\cite[Chapter 7.4]{Mar}.

In the work~\cite{GLSW} a discrete conformal class in the Euclidean case $\mathfrak C_0(d)$ was associated with a hyperbolic cusp-metric $g$. It was shown that a Delaunay triangulation $\mathcal T$ of a metric $d' \in \mathfrak C_0(d)$ is a Delaunay triangulation of $g$ for some choice of decoration $h$, also called an \emph{Epstein--Penner triangulation}. Here we do not give the definition of a Delaunay triangulation of a decorated hyperbolic cusp-metric, but refer to~\cite{Spr, EP}. The work of Akiyoshi~\cite{Aki} implies then

\begin{lm}
Let $d \in \mathfrak M_0(S, V)$. There are finitely many triangulations $\mathcal T$ of $(S, V)$ such that $\mathfrak D_0(\mathcal T)\cap \mathfrak C_0(d)$ is non-empty.
\end{lm}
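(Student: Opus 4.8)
The plan is to transport the statement to the hyperbolic side and reduce it to Akiyoshi's finiteness theorem for Epstein--Penner decompositions. Recall from~\cite{GLSW} (see also~\cite{BPS}) that the discrete conformal class $\mathfrak C_0(d)$ is canonically associated with a single hyperbolic cusp-metric $g$ on $(S,V)$, in such a way that every metric $d'\in\mathfrak C_0(d)$ determines a decoration $h=h_{d'}$ of $g$ (the scaling $d'\mapsto\lambda d'$ corresponding to a simultaneous shift of all horocycles), and the Delaunay decomposition of $d'$ equals the Epstein--Penner decomposition $\mathcal D(g,h_{d'})$.

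Given this dictionary, the argument runs as follows. Suppose $\mathcal T$ is a triangulation of $(S,V)$ with $\mathfrak D_0(\mathcal T)\cap\mathfrak C_0(d)\neq\emptyset$, and pick $d'\in\mathfrak D_0(\mathcal T)\cap\mathfrak C_0(d)$. Then $\mathcal T$ is a Delaunay triangulation of $d'$, hence a refinement of the Delaunay decomposition of $d'$, which is $\mathcal D(g,h_{d'})$. So every triangulation $\mathcal T$ appearing in the statement refines $\mathcal D(g,h)$ for some decoration $h$ of the fixed cusp-metric $g$. Akiyoshi's theorem~\cite{Aki} asserts that as $h$ ranges over all decorations of $g$, only finitely many distinct decompositions $\mathcal D(g,h)$ occur. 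Since moreover any polygonal decomposition of $(S,V)$ has only finitely many refinements to a triangulation --- it has finitely many faces, and each face, being a polygon, has only finitely many triangulations --- we obtain finitely many possibilities for $\mathcal T$.

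The step that needs care is aligning the two dictionaries precisely: on the one hand ``Euclidean cone-metric $d'$ / its Delaunay decomposition'', on the other ``decoration $h$ of $g$ / its Epstein--Penner decomposition $\mathcal D(g,h)$'', and then invoking Akiyoshi's result in exactly the form ``finitely many canonical cell decompositions as the cusp weights vary over the whole positive orthant''. Both facts are available in the literature, so the remaining work is essentially bookkeeping; in particular we do not need every decoration of $g$ to come from some $d'\in\mathfrak C_0(d)$, as only an upper bound on the number of $\mathcal T$ is required. The one genuinely elementary addition is the observation that refining a fixed decomposition to triangulations yields only finitely many outcomes.
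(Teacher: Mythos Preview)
Your argument is correct and follows essentially the same route as the paper: the paper does not give a self-contained proof but simply cites \cite{GLSW} for the identification of Delaunay triangulations in $\mathfrak C_0(d)$ with Epstein--Penner triangulations of the associated cusp-metric $g$, and then invokes Akiyoshi~\cite{Aki} for finiteness. Your write-up is slightly more explicit in separating decompositions from their triangulation refinements, but this is the same approach.
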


Together with Lemmas~\ref{phi1} and~\ref{phi2} this implies

\begin{crl}
\label{fin}
Let $d \in \mathfrak M_{+, T}(S, V)$. There are finitely many triangulations $\mathcal T$ of $(S, V)$ such that $\mathfrak D_+(\mathcal T)\cap \mathfrak C_+(d)$ is non-empty.
\end{crl}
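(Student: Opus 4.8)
The plan is to transfer the statement to the Euclidean setting through the map $\Phi$ and then invoke the finiteness result for Euclidean cone-metrics stated just above. First I would fix a triangulation $\mathcal T$ with $\mathfrak D_+(\mathcal T)\cap\mathfrak C_+(d)\neq\emptyset$ and pick a witness metric $d'\in\mathfrak D_+(\mathcal T)\cap\mathfrak C_+(d)$. Since $d'\in\mathfrak D_+(\mathcal T)$, Lemma~\ref{phi1} gives $\Phi(d')\in\mathfrak D_0(\mathcal T)$; since $d'$ is discretely conformally equivalent to $d$, Lemma~\ref{phi2} gives that $\Phi(d')$ is discretely conformally equivalent to $\Phi(d)$, i.e. $\Phi(d')\in\mathfrak C_0(\Phi(d))$. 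Hence $\mathfrak D_0(\mathcal T)\cap\mathfrak C_0(\Phi(d))\neq\emptyset$. (Here one uses that $\Phi$ is well-defined on $\mathfrak M_{+,T}(S,V)$, as checked earlier in this section, and that $\Phi(d)\in\mathfrak M_0(S,V)$.)

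Next I would apply the lemma above — the consequence of Akiyoshi's work — to the Euclidean cone-metric $\Phi(d)\in\mathfrak M_0(S,V)$: it asserts that only finitely many triangulations $\mathcal T$ of $(S,V)$ satisfy $\mathfrak D_0(\mathcal T)\cap\mathfrak C_0(\Phi(d))\neq\emptyset$. Combining this with the implication established in the previous paragraph, namely that $\mathfrak D_+(\mathcal T)\cap\mathfrak C_+(d)\neq\emptyset$ forces $\mathfrak D_0(\mathcal T)\cap\mathfrak C_0(\Phi(d))\neq\emptyset$, we conclude that only finitely many triangulations $\mathcal T$ can satisfy $\mathfrak D_+(\mathcal T)\cap\mathfrak C_+(d)\neq\emptyset$, which is the claim.

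As for the main obstacle: there is essentially none at this level, since the corollary is a purely formal consequence of Lemmas~\ref{phi1} and~\ref{phi2} together with the cited Euclidean finiteness lemma; all the genuine difficulty is hidden in that lemma (via the Epstein--Penner and Akiyoshi theory of decorated hyperbolic cusp-metrics), which we are entitled to assume. The only point requiring a moment's care is that the correspondence is used in the correct direction: we need only the one-sided implication ``$\mathfrak D_+(\mathcal T)\cap\mathfrak C_+(d)\neq\emptyset$ implies $\mathfrak D_0(\mathcal T)\cap\mathfrak C_0(\Phi(d))\neq\emptyset$'', obtained by applying the two lemmas to a single witness metric $d'$; in particular injectivity of $\Phi$ is not even needed for this argument.
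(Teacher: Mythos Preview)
Your argument is correct and is exactly the approach the paper takes: the paper simply says ``Together with Lemmas~\ref{phi1} and~\ref{phi2} this implies'' Corollary~\ref{fin}, and you have spelled out precisely that implication. There is nothing to add.
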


\subsection{Semi-ideal triangles and tetrahedra}
\label{laws}

We will use some basic trigonometry of hyperbolic triangles having simultaneously ideal and non-ideal vertices. We consider ideal points in $\H^2$ or $\H^3$ always equipped with horocycles or horospheres respectively, called \emph{canonical}. In other words, we abuse the notation and by an ideal point we mean a pair (ideal point, horocycle/horosphere centered at this point). By a distance between two ideal points we mean the signed distance between their canonical horocycles/horospheres. Similarly, the distance from an ideal point to a non-ideal one means the signed distance from the canonical horocycle/horosphere to the non-ideal point.

\begin{dfn}
A triangle $BA_1A_2$ in $\H^2$ is called \emph{semi-ideal} if the vertices $A_1$ and $A_2$ are ideal and the vertex $B$ is not.
\end{dfn}

We denote the distances $BA_1$, $BA_2$, $A_1A_2$ by $u_1$, $u_2$, $a_{12}$ respectively. By $l_{12}$ we denote the angle at $B$. By $\rho_{12}$ and $\rho_{21}$ we denote the lengths of the parts of the canonical horocycles at $A_1$ and $A_2$ respectively cut out by the angles of the triangle. 

\begin{lm}[Cosine laws]
\label{coslaw}
We have
$$\rho^2_{12}=\exp(u_2-u_1-a_{12})-\exp(-2u_1),~~~~~\rho^2_{21}=\exp(u_1-u_2-a_{12})-\exp(-2u_2),$$
$$\cos(l_{12})=\frac{\exp(u_1+u_2)-2\exp(a_{12})}{\exp(u_1+u_2)}.$$
\end{lm}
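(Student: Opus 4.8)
The plan is to compute everything explicitly in the upper half-plane model of $\H^2$. After applying an isometry I would place the two ideal vertices at $A_1=\infty$ and $A_2=0$, so that the side $A_1A_2$ is the vertical line $\{x=0\}$, and write $B=(b,h)$ with $b,h>0$ (reflecting in $\{x=0\}$ if necessary). Then the side $BA_1$ is the vertical ray $\{x=b,\ y\ge h\}$, and $BA_2$ is the arc through $B$ of the Euclidean semicircle of radius $c=\frac{b^2+h^2}{2b}$ centered at $(c,0)$ — the unique geodesic joining $B$ and $0$. The canonical horocycle at $A_1$ is a horizontal line $\{y=\eta_1\}$, and the canonical horocycle at $A_2$ is the Euclidean circle through $0$ with topmost point $(0,\eta_2)$.

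Next I would extract three elementary relations. Since the model is conformal, $\cos l_{12}$ is the Euclidean angle at $B$ between the upward vertical direction $(0,1)$ and the tangent to $BA_2$ at $B$, which is perpendicular to the radius vector $(b-c,h)$ and points towards $0$, hence is proportional to $(-2bh,\,b^2-h^2)$; this gives $\cos l_{12}=\frac{b^2-h^2}{b^2+h^2}$. The signed distance from $B$ to $\{y=\eta_1\}$ measured along $\{x=b\}$ is $u_1=\log(\eta_1/h)$, so $\eta_1=h\,e^{u_1}$; and the sub-arc of $\{y=\eta_1\}$ between the sides $\{x=0\}$ and $\{x=b\}$ has hyperbolic length $\int_0^b \eta_1^{-1}\,dx=b/\eta_1$, whence $\rho_{12}=(b/h)\,e^{-u_1}$. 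Finally, the geodesic $\{x=0\}$ meets the two canonical horocycles at $(0,\eta_1)$ and $(0,\eta_2)$, so $a_{12}=\log(\eta_1/\eta_2)$; and parametrizing the semicircle $BA_2$ by the Euclidean angle $\phi$ (with $A_2$ at $\phi=\pi$), for which the hyperbolic arc length is $\log\tan(\phi/2)$ up to an additive constant and $y/x=\tan(\phi/2)$, one finds $e^{u_2}=\frac{2cb}{\eta_2 h}=\frac{b^2+h^2}{\eta_2 h}$. Combining these with the values of $\eta_1$ and $\eta_2$ yields the single clean identity $e^{u_1+u_2-a_{12}}=\frac{b^2+h^2}{h^2}$, equivalently $\frac{b^2}{h^2}=e^{u_1+u_2-a_{12}}-1$.

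It then only remains to assemble. Squaring $\rho_{12}=(b/h)e^{-u_1}$ and inserting the last identity gives $\rho_{12}^2=\frac{b^2}{h^2}e^{-2u_1}=e^{u_2-u_1-a_{12}}-e^{-2u_1}$, the first claimed formula; and substituting the last identity into the expression for $\cos l_{12}$ gives $\cos l_{12}=\frac{e^{u_1+u_2-a_{12}}-2}{e^{u_1+u_2-a_{12}}}=\frac{e^{u_1+u_2}-2e^{a_{12}}}{e^{u_1+u_2}}$. The second horocyclic formula follows from the first by the symmetry exchanging $A_1$ and $A_2$, which swaps $u_1\leftrightarrow u_2$ and $\rho_{12}\leftrightarrow\rho_{21}$ and fixes $a_{12}$.

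Everything above is routine trigonometry, so I expect the only delicate points to be bookkeeping. First, the sign conventions for the signed distances $u_1,u_2,a_{12}$ must be set so that, for instance, $\frac{b^2}{h^2}=e^{u_1+u_2-a_{12}}-1$ comes out automatically positive — this encodes both the triangle-type inequality $u_1+u_2>a_{12}$ and the consistency $\rho_{12}^2>0$. Second, one must pin down exactly which sub-arcs are the ones ``cut out by the angles of the triangle''; for this it is cleanest to first assume the canonical horocycles are small enough that each meets the closed triangle in a single arc joining the two adjacent sides, and then observe that, since all three displayed identities are analytic in the horocycle parameters $\eta_1,\eta_2$, the general case follows by analytic continuation. (One could instead run the same computation in the hyperboloid model via the $\lambda$-length formalism, where $e^{a_{12}/2}$ is the $\lambda$-length of the edge $A_1A_2$ and similar bilinear expressions in the light cone encode $u_1$ and $u_2$.)
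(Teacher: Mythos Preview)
Your computation in the upper half-plane model is correct: the normalization $A_1=\infty$, $A_2=0$, $B=(b,h)$ makes all three quantities elementary, and the key identity $e^{u_1+u_2-a_{12}}=(b^2+h^2)/h^2$ that you extract cleanly yields both the horocyclic cosine law and the angle formula. Your caveat about sign conventions is well handled; note in fact that the identity itself forces $u_1+u_2>a_{12}$ automatically (since $(b^2+h^2)/h^2>1$), so the positivity of $\rho_{12}^2$ is not an extra assumption but a consequence of $B$ being non-ideal.

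This is a genuinely different route from the paper's. The paper does not prove the lemma from scratch: for the two horocyclic formulas it simply cites \cite[Lemma~2.3]{Pro1}, and for the angle formula it invokes the ordinary hyperbolic cosine law for a compact triangle and passes to the limit as two vertices go to the ideal boundary. Your argument has the advantage of being entirely self-contained and of producing all three formulas simultaneously from a single model computation; the paper's approach has the advantage of explaining \emph{why} the third formula deserves the name ``cosine law'' (it is literally the limit of $\cos l_{12}=\frac{\cosh u_1\cosh u_2-\cosh a_{12}}{\sinh u_1\sinh u_2}$ after renormalizing by the horocycles), and it avoids any coordinate bookkeeping. Either approach is adequate here.
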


The first two formulas are shown in~\cite[Lemma 2.3]{Pro1}. The last formula is obtained from the cosine law for hyperbolic triangles with all non-ideal vertices by passing to the limit. We also have an analogue of the sine law:

\begin{lm}[Sine law]
\label{sinlaw}
We have
$$\frac{1}{2}\sin(l_{12})\exp(-a_{12})=\rho_{12}\exp(-u_2)=\rho_{21}\exp(-u_1).$$
\end{lm}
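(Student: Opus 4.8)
The plan is to deduce the sine law directly from the cosine laws of Lemma~\ref{coslaw}, by checking that the squares of the three quantities in the asserted chain of equalities all coincide, and then resolving the signs.

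First I would rewrite the angle cosine law as $\cos(l_{12}) = 1 - 2\exp(a_{12}-u_1-u_2)$, so that $1-\cos(l_{12}) = 2\exp(a_{12}-u_1-u_2)$ and $1+\cos(l_{12}) = 2\bigl(1-\exp(a_{12}-u_1-u_2)\bigr)$. Multiplying these,
$$\sin^2(l_{12}) = 4\exp(a_{12}-u_1-u_2)\bigl(1-\exp(a_{12}-u_1-u_2)\bigr),$$
and hence $\tfrac14\sin^2(l_{12})\exp(-2a_{12}) = \exp(-a_{12}-u_1-u_2) - \exp(-2u_1-2u_2)$. Next I would multiply the two formulas for $\rho_{12}^2$ and $\rho_{21}^2$ by $\exp(-2u_2)$ and $\exp(-2u_1)$ respectively; a one-line expansion shows
$$\rho_{12}^2\exp(-2u_2) = \rho_{21}^2\exp(-2u_1) = \exp(-a_{12}-u_1-u_2) - \exp(-2u_1-2u_2).$$
So all three squared quantities agree.

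Finally I would take positive square roots. Each of $\tfrac12\sin(l_{12})\exp(-a_{12})$, $\rho_{12}\exp(-u_2)$, $\rho_{21}\exp(-u_1)$ is positive: the exponential factors are positive for any real exponent, $\sin(l_{12})>0$ because the angle of a non-degenerate triangle lies in $(0,\pi)$, and $\rho_{12},\rho_{21}>0$ since they are lengths of genuine horocyclic arcs. Equality of their squares therefore forces equality of the quantities themselves, which is the claim. (In passing this re-derives the constraint $u_1+u_2>a_{12}$, equivalent to $1+\cos(l_{12})>0$, that is needed for the common value to be a positive number in the first place.)

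I do not expect any real obstacle: the content is an elementary algebraic manipulation of the formulas in Lemma~\ref{coslaw}, and the only point requiring a little care is verifying that all three expressions carry the same (positive) sign before extracting roots, which follows from the non-degeneracy of the semi-ideal triangle. An alternative would be a direct computation in the upper half-plane model of $\H^2$, placing $A_1$ and $A_2$ on $\partial\H^2$ with their canonical horocycles and $B$ at a general interior point, but the route through the already-established cosine laws is considerably shorter.
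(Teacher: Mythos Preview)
Your proposal is correct and follows essentially the same route as the paper: the paper simply remarks that ``it is not hard to see from the cosine laws that the square of each expression is equal to $\exp(-u_1-u_2-a_{12})-\exp(-2u_1-2u_2)$,'' which is exactly the computation you carry out in detail. Your added discussion of signs when extracting square roots is a welcome clarification that the paper leaves implicit.
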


Indeed, it is not hard to see from the cosine laws that the square of each expression is equal to $$\exp(-u_1-u_2-a_{12})-\exp(-2u_1-2u_2).$$

Let $BA_{12}$ be the perpendicular from $B$ to $A_1A_2$. By $b_{12}$ and $b_{21}$ we denote the distances $A_{12}A_1$ and $A_{12}A_2$ respectively, by $u_{12}$ denote the distance from $B$ to $A_1A_2$. (So $a_{12}=b_{12}+b_{21}$.)

\begin{lm}
\label{perpend}
We have
$$\exp(u_1)=\exp(b_{12})\cosh(u_{12}),~~~~~\exp(u_2)=\exp(b_{21})\cosh(u_{12}),$$
$$\rho^2_{12}=\exp(-2b_{12})-\exp(-2u_1),~~~~~\rho^2_{21}=\exp(-2b_{21})-\exp(-2u_2).$$
\end{lm}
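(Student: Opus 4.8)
The plan is to reduce the statement to the case of a \emph{right} semi-ideal triangle with a single ideal vertex, and then to compute directly in the upper half-plane model. Dropping the perpendicular $BA_{12}$ splits $BA_1A_2$ into the two triangles $BA_{12}A_1$ and $BA_{12}A_2$, each having a right angle at $A_{12}$. Since $A_{12}$ lies on the geodesic $A_1A_2$, the ray from $A_1$ through $A_{12}$ is the same as the ray from $A_1$ through $A_2$; hence the angle of $BA_{12}A_1$ at $A_1$ equals the angle of $BA_1A_2$ at $A_1$, and in particular the part of the canonical horocycle at $A_1$ cut out by $BA_{12}A_1$ is exactly $\rho_{12}$ (and similarly at $A_2$, with $\rho_{21}$). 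Moreover, with signed distances the identity $a_{12}=b_{12}+b_{21}$ holds automatically, regardless of whether $A_{12}$ lies inside the horodisks. So it suffices to prove: for a semi-ideal triangle with ideal vertex $A$ carrying its canonical horocycle, right angle at the finite vertex $C$, and finite vertex $B$, one has $\exp(BA)=\exp(CA)\cosh(CB)$ and $\rho^2=\exp(-2\,CA)-\exp(-2\,BA)$, where $BA$, $CA$ are the signed distances from $B$, $C$ to the canonical horocycle and $\rho$ is the length of the horocyclic arc at $A$ inside the triangle.

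To prove these two identities I would normalize the half-plane model so that $A=\infty$, the canonical horocycle at $A$ is the line $y=1$, and the geodesic $AC$ is the vertical line $x=0$. Write $C=(0,t)$, so that $CA=-\log t$, i.e. $t=\exp(-CA)$. The side $CB$, being perpendicular to $x=0$ at $C$, lies on the Euclidean semicircle of radius $t$ centered at the origin, so $B=(t\cos\phi,\,t\sin\phi)$ for some $\phi\in(0,\pi)$ with $\phi\neq\pi/2$; the standard arc-length computation along this semicircle gives $\cosh(CB)=1/\sin\phi$. Since the geodesic $BA$ is the vertical line through $B$, its signed distance to $y=1$ is $BA=-\log(t\sin\phi)$, which rearranges to $\exp(BA)=\exp(CA)\cosh(CB)$. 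For the second identity, the two sides of the triangle issuing from $A=\infty$ are the vertical lines $x=0$ and $x=t\cos\phi$, so the horocyclic arc has length $\rho=t|\cos\phi|=\exp(-CA)|\cos\phi|$; squaring and using $\cos^2\phi=1-\sin^2\phi$ together with the first identity gives $\rho^2=\exp(-2\,CA)-\exp(-2\,BA)$.

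Applying this to the right triangle $BA_{12}A_1$ (ideal vertex $A_1$, right angle at $A_{12}$) yields $\exp(u_1)=\exp(b_{12})\cosh(u_{12})$ and $\rho_{12}^2=\exp(-2b_{12})-\exp(-2u_1)$, and applying it to $BA_{12}A_2$ yields the two companion formulas with the indices exchanged; this is all four identities of the lemma. I do not expect a genuine obstacle here: the only point requiring care is the sign bookkeeping for the three signed distances — and the attendant fact that the formulas are insensitive to whether $A_{12}$ falls inside the relevant horodisks — which is handled uniformly by working with signed distances throughout. One could instead derive the lemma purely algebraically from the cosine and sine laws of Lemmas~\ref{coslaw} and~\ref{sinlaw} by eliminating $l_{12}$, but the coordinate computation above is shorter and keeps the geometric meaning of each quantity transparent.
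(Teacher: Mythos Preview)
Your proof is correct. The upper half-plane computation is clean and complete: the normalization $A=\infty$ with horocycle $y=1$ makes the signed-distance bookkeeping automatic, and your verification that $\cosh(CB)=1/\sin\phi$ and $\rho=t|\cos\phi|$ is accurate.

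The paper takes a different route. For the first pair of identities it simply appeals to the \emph{limit of the hyperbolic Pythagoras theorem}: in a compact right triangle one has $\cosh c=\cosh a\cosh b$, and letting the vertex $A_1$ tend to the ideal boundary (replacing $\cosh$ by $\exp$ on the two sides meeting at $A_1$, as in the derivation of Lemma~\ref{coslaw}) gives $\exp(u_1)=\exp(b_{12})\cosh(u_{12})$ without any coordinate work. For the second pair it then substitutes these relations, together with $a_{12}=b_{12}+b_{21}$, into the cosine law $\rho_{12}^2=\exp(u_2-u_1-a_{12})-\exp(-2u_1)$ of Lemma~\ref{coslaw}: the ratio $\exp(u_2-u_1)$ collapses to $\exp(b_{21}-b_{12})$, and one reads off $\rho_{12}^2=\exp(-2b_{12})-\exp(-2u_1)$. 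So the paper's argument is purely algebraic and stays within the framework of Lemmas~\ref{coslaw}--\ref{sinlaw}, while yours is a direct geometric computation that is independent of those lemmas. Your approach is more self-contained and keeps the sign conventions visible; the paper's is shorter once one trusts the limit-of-Pythagoras step and has the cosine laws in hand. You already note this alternative in your last paragraph, which is appropriate.
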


Here the first two formulas are just the limits of the Pythagoras theorem for hyperbolic triangles. The last two formulas are obtained from substituting them in the cosine laws.

\begin{dfn}
A tetrahedron $BA_1A_2A_3$ in $\H^3$ is called \emph{semi-ideal} if the vertices $A_1$, $A_2$ and $A_3$ are ideal and the vertex $B$ is not.
\end{dfn}

The notation for the parameters of a semi-ideal tetrahedron $BA_1A_2A_3$ is inherited from semi-ideal triangles.

\subsection{Ideal cone-polyhedra}
\label{conpol}

%We identify $\mathbb H^3$ as the unit open ball in $\mathbb R^3$ as in the Poincar\'e ball model.
Let $d \in \mathfrak M_{+}(\mathcal T)$ for some triangulation $\mathcal T$ of $(S, V)$. For each triangle $T$ of $\mathcal T$ consider a semi-ideal tetrahedron $\bar T$ in $\H^3$ that has $T$ as its spherical link at the non-ideal vertex $B$. Here by the spherical link we mean the part of the unit sphere in $T_B\H^3$ that is cut off by our tetrahedron. Note that $\bar T$ exists and is determined up to isometry by $T$. We decorate $\bar T$ by considering at each ideal vertex the horosphere that passes through the non-ideal vertex. Glue all such tetrahedra together with respect to $\mathcal T$ so that at the ideal boundary the canonical horospheres %(intersections of the canonical horospheres with the ideal boundary) 
match together. We obtain a complex $P=P(d, \mathcal T)$, which is a complete cone-manifold with ideal boundary, called \emph{ideal cone-polyhedron}. We consider them up to marked isometry: isometry such that the induced map on $(S, V)$ fixes $V$ and is isotopic to identity with respect to $V$. We say that the point of $P$ obtained from gluing all the non-ideal vertices of the tetrahedra is \emph{the marked point} of $P$.

We note that the total dihedral angle at the interior edge corresponding to $v \in V$ is $\omega_v(d)$. We can say that $\kappa_v(d)$ is the curvature of this interior edge. For $e \in E(\mathcal T)$ we denote the dihedral angle of an edge $e \in E(\mathcal T)$ at the boundary of $P$ by $\alpha_e$. We say that $P$ is \emph{convex} if all $\alpha_e \leq \pi$. With the help of the central projection from the marked point we identify the boundary of $P$ with $(S, V)$, equipping the latter with a decorated cusp-metric.

We can construct ideal cone-polyhedra in another way. Let $(g, h)$ be a decorated hyperbolic cusp-metric on $(S, V)$, $\mathcal T$ be a triangulation of $(S, V)$ and $u \in \R^V$. Assume that for every triangle of $\mathcal T$ there exists a (non-degenerate) semi-ideal tetrahedron with the ideal face being a decorated ideal triangle coming from the realization of $\mathcal T$ in $(g, h)$ and edge lengths to the non-ideal point being determined by $u$. Then we can glue these tetrahedra together so that the decorations given by $h$ match together. We obtain an ideal cone-polyhedron, which in this case we denote by $P=\bar P(g, h, \mathcal T, u)$.

For an ideal cone-polyhedron $P=P(d,\mathcal T)$ or $P=\bar P(g, h, \mathcal T, u)$ we say that an edge $e$ of $\mathcal T$ is an edge of $P$ if $\alpha_e \neq \pi$. A face of $P$ is a connected component of the boundary of $P$ minus all edges. This determines \emph{the face decomposition of $P$}. A priori there could be non-simply connected faces in the face decomposition, then it would not be a polygonal decomposition in the sense of Definition~\ref{celldef}. We will soon show that this is not the case.

Let $o \in \H^3$, $\Pi \subset \H^3$ be a geodesic plane not containing $o$ and $q$ be the closest point from $\Pi$ to $o$. By $\mu: \Pi \rightarrow \R$ we denote the distance function from $\Pi$ to $o$. Then for each $p \in \Pi$ we have
\begin{equation}
\label{d}
\cosh\mu(p)=\cosh \mu(q)\cosh d_{\H^3}(p,q).
\end{equation}

Now consider an ideal cone-polyhedron $P$. Let $\mu: S\backslash V \rightarrow \R$ be the \emph{distance function} from the boundary of $P$, identified with $S\backslash V$, to the marked point of $P$. Let $\psi: [0, \tau] \rightarrow S$ be a unit-speed geodesic in the cusp-metric of the boundary. Then it may have kink points coming from the edges of $P$. Denote their coordinates by $t_1, \ldots, t_k \in [0, \tau]$, also set $t_0:=0$, $t_{k+1}:=\tau$. Due to~(\ref{d}), the restriction of $\mu\circ \psi$ to $(t_i, t_{i+1})$ has the form 
\begin{equation}
\label{d1}
\mu\circ \psi(t)=\arcosh(b_i\cosh(t-a_i))
\end{equation}
for some real numbers $a_i$, $b_i$ where $i=0, \ldots, k$. If $P$ is convex, then at the kink points $t_i$ the left derivative of $\mu \circ \psi$ is greater than the right derivative. 

\begin{lm}
\label{face}
All faces of an ideal cone-polyhedron $P$ are simply connected.
\end{lm}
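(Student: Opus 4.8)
The plan is to argue by contradiction: suppose some face $F$ of the face decomposition of $P$ is not simply connected. Since $F$ is an open connected subsurface of the sphere, there is a simple closed curve $\gamma$ in $F$ which is non-contractible in $F$. As $F$ is a connected component of $(S\backslash V)$ minus the edges of $P$, the curve $\gamma$ must enclose on one side either at least one point of $V$ or at least one edge of $P$; in either case, after isotoping $\gamma$ within $F$, we may choose $\gamma$ to bound a disk $D\subset S$ containing in its interior at least one cusp of $P$ or at least one edge of $P$ — in all cases, at least one point where the distance function $\mu$ to the marked point is ``large'' relative to nearby points of $F$. More precisely, the key is that on the single face $F$ the distance function $\mu$ behaves like the distance function to a point from a fixed geodesic plane, so by (\ref{d}) it is a strictly convex function of arclength along geodesics lying in $F$, having a unique minimum and no interior local maxima.

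The main steps: First I would set up the local model. A face $F$ lies in a single geodesic plane $\Pi_F$ of the universal-cover picture of $\bar T$'s gluing, so by (\ref{d1}) with $k=0$, along any geodesic segment $\psi$ contained in $F$ we have $\mu\circ\psi(t)=\arcosh(b\cosh(t-a))$, which is strictly convex in $t$ and proper (tends to $+\infty$ as $|t|\to\infty$). Next, I would invoke the convexity of $P$: at each edge of $P$ the left derivative of $\mu$ along a transversal geodesic exceeds the right derivative, i.e. $\mu$ restricted to the boundary surface is a concave-type ``fan'' across edges — crossing an edge of a convex polyhedron from inside a face, $\mu$ bends ``downward''. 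Hence on the whole boundary $S\backslash V$ the function $\mu$ is, along any geodesic, a concatenation of pieces (\ref{d1}) with the kink inequalities of the convex case, so it has no strict interior local minimum other than possibly at a single point, and more importantly: the superlevel sets $\{\mu \geq c\}$ cannot have a bounded component not meeting $V$.

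Then comes the contradiction. If $F$ were not simply connected, I would take the disk $D$ bounded by $\gamma\subset F$. Its closure is compact, so $\mu|_{\bar D}$ attains a maximum at some interior point $p$ of $D$ (the maximum is not on $\gamma=\partial D$: on $\gamma$, which lies in $F$ and is compact, we could push slightly outward, but more cleanly — since $D$ contains a cusp or an edge, $\sup_D \mu=+\infty$ if it contains a cusp, contradicting compactness of $\bar D$ unless $D$ actually contains a cusp in which case $\bar D$ is not compact, contradiction; and if $D$ contains only edges of $P$ and points of $F$, then $p\in F$). At an interior maximum $p\in F$, pick any geodesic of the cusp-metric through $p$ staying in $F$ for a short time; along it $\mu$ has form (\ref{d1}) with $k=0$, hence is strictly convex, so $p$ cannot be a local maximum — contradiction. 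The case where $D$ contains a cusp is handled because $\bar D\subset S$ is compact but a punctured neighborhood of a cusp is not relatively compact in $S\backslash V$; since $\gamma\subset F\subset S\backslash V$ bounds $D$ and $D$ would then contain a full punctured neighborhood of $v\in V$, we get that $v\in D$, and $D\cup\{v\}$ is a disk in $S$ — fine topologically, but then $F$ being a component of the complement of the edges cannot wrap around $v$ unless $v$ is an isolated cusp with no edges of $P$ incident to it, which contradicts $\kappa_v(d)\neq 0$ forcing at least one edge of $\mathcal T$ at $v$ to be an edge of $P$ (an edge with $\alpha_e\neq\pi$), since otherwise the total boundary angle around $v$ would be $2\pi$.

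The hard part will be making the second step — the global ``no bounded superlevel component'' statement — fully rigorous from the local convexity/concavity data, i.e. correctly combining the strict convexity of $\mu$ within each face with the convex kink inequality across edges to conclude that a topologically essential loop in a single face is impossible. This is a maximum-principle argument and the delicate point is handling the behavior at the cusps (where $\mu\to+\infty$) and ensuring that the putative non-simply-connected face genuinely forces the existence of an interior maximum of $\mu$ on a single face, where strict convexity along any in-face geodesic gives the immediate contradiction.
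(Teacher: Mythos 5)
Your local input is the same as the paper's --- along a geodesic inside a single face, $\mu$ has the form $\arcosh(b\cosh(t-a))$ of (\ref{d1}) with no kinks --- but the global step, which you yourself flag as ``the hard part'', contains a genuine gap, and in fact the contradiction you aim for never materializes. The disk $D$ bounded by your essential loop $\gamma\subset F$ necessarily contains points outside $F$ (edges of $P$, possibly cusps); that is exactly why $\gamma$ is non-contractible in $F$. Your argument needs the maximum of $\mu|_{\bar D}$ to occur at an interior point lying in $F$, but the correct version of the maximum principle here points the other way: $\mu$ has no interior local maximum anywhere on $S\setminus V$ (in a face it is strictly convex along geodesics; on an edge it is strictly convex along the edge direction), so the maximum of $\mu$ over $\bar D$ is attained on $\partial D=\gamma$ itself --- which is perfectly consistent and yields no contradiction. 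Your parenthetical reason for excluding $\gamma$ (``we could push slightly outward'') is unsupported, and the assertion that if $D$ contains only edges then the maximum point lies in $F$ is false: across an edge of a convex $P$ the kink of $\mu$ is concave (the left derivative exceeds the right derivative), so edge points are natural local maxima in the transversal direction. The cusp case is also not disposed of: there $\sup_D\mu=+\infty$, and your fallback appeals to $\kappa_v(d)\neq 0$ and to convexity of $P$, neither of which is a hypothesis of the lemma.

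The paper's proof replaces the two-dimensional maximum hunt by a one-dimensional periodicity argument: a non-simply connected face contains a closed geodesic $\psi$ meeting no edges of $P$ (this existence statement is the nontrivial step, deferred to \cite[Lemma 4.12]{Pro1}); along $\psi$ the function $\mu$ must be periodic, yet by (\ref{d1}) with $k=0$ it equals $\arcosh(b\cosh(t-a))$, which is proper and hence not periodic. If you want to salvage your approach, the variational object to extract from the essential loop is a closed geodesic in its free homotopy class inside $F$, not a maximum of $\mu$ on $D$; once that geodesic is produced, your own convexity observation along in-face geodesics finishes the proof in one line.
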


\begin{proof}
In the same way as in~\cite[Lemma 4.12]{Pro1} one can show that if we have a non-simply connected face, then it contains a closed geodesic $\psi$ (in the extrinsic sense, so intersecting no edges of $P$). Then the restriction of $\mu$ to $\psi$ must be periodic. On the other hand, it must have the form (\ref{d1}), which is not periodic, hence the contradiction.
\end{proof}

It follows that the face decomposition of $P$ is a polygonal decomposition.

\begin{lm}
\label{delaunay}
The ideal cone-polyhedron $P(d, \mathcal T)$ is convex if and only if $\mathcal T$ is Delaunay for $d$. Moreover, if $e \in E(\mathcal T)$ is an interior edge of a face in the Delaunay decomposition containing $\mathcal T$, then $\alpha_e=\pi$. %In particular, if $\mathcal T$ and $\mathcal T'$ are distinct Delaunay triangulations for $d$, then $P(d, \mathcal T)$ and $P(d, \mathcal T')$ are marked isometric.
\end{lm}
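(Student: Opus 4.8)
The statement to prove is Lemma~\ref{delaunay}: the ideal cone-polyhedron $P(d,\mathcal T)$ is convex iff $\mathcal T$ is Delaunay for $d$, and moreover interior edges of Delaunay-decomposition faces carry dihedral angle exactly $\pi$. The natural strategy is to reduce everything to a \emph{local} statement about a single pair of adjacent triangles, since both the Delaunay condition (Definition of Delaunay) and convexity of $P$ (the condition $\alpha_e\le\pi$ for every $e$) are conditions edge-by-edge. So first I would fix an edge $e\in E(\mathcal T)$ shared by two triangles $T,T'$ of $\mathcal T$, and show that the dihedral angle $\alpha_e$ of $P(d,\mathcal T)$ along $e$ is $\le\pi$ (resp.\ $=\pi$, resp.\ $>\pi$) precisely when the local Delaunay inequality at $e$ holds strictly (resp.\ with equality, resp.\ fails). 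Given this, the global statement follows immediately: $P$ is convex iff every $\alpha_e\le\pi$ iff every $e$ satisfies the Delaunay inequality iff $\mathcal T$ is Delaunay; and $\alpha_e=\pi$ exactly on those edges one erases to pass to the Delaunay decomposition, which is the "moreover" clause.

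For the local computation, I would develop the two semi-ideal tetrahedra $\bar T,\bar T'$ sharing the semi-ideal face over $e$ into $\H^3$, glued along that face, with their common non-ideal vertex being the marked point. The key geometric fact is the one already recorded in the excerpt: via the construction relating $\mathfrak D_+(\mathcal T)$ to $\mathfrak D_0(\mathcal T)$ through $2\sin(l/2)$ and Lemma~\ref{phi1}, the spherical Delaunay condition for $d$ along $e$ is equivalent to the Euclidean Delaunay condition for $\Phi(d)$ along $e$, which in turn — by the standard dictionary between Euclidean Delaunay triangulations and decorated hyperbolic cusp-metrics (the Epstein--Penner picture) — is equivalent to the dihedral angle between the two \emph{ideal} faces of $P$ being $\le\pi$ when measured from outside. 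The remaining point is to translate "dihedral angle between the ideal faces" into the dihedral angle $\alpha_e$ along $e$ at the boundary of $P$: since the boundary face decomposition of $P$ is glued from the ideal triangles of the tetrahedra, and $\alpha_e$ is literally the angle at which those two ideal triangular faces meet along $e$, these are the same quantity. This can be made quantitative using the semi-ideal trigonometry of Section~\ref{laws} (Lemmas~\ref{coslaw}--\ref{perpend}): one writes the dihedral angle along $e$ as a function of the edge lengths $u_i$ to the marked point and the horocyclic lengths $\rho_{ij}$, and checks the sign of its deviation from $\pi$.

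The main obstacle I expect is precisely this last identification — pinning down that the "convexity" condition $\alpha_e\le\pi$ on the \emph{boundary} dihedral angle of the ideal cone-polyhedron coincides with the dihedral-angle condition that characterizes Delaunay-ness, with the correct orientation of the inequality (inward vs.\ outward measurement, as in the sphere/tetrahedron discussion preceding Lemma~\ref{phi1}). One has to be careful that the decoration chosen in the construction of $P(d,\mathcal T)$ — the horosphere at each ideal vertex passing through the non-ideal vertex $B$ — is exactly the one for which the gluing is consistent, and that under central projection from the marked point this decoration matches the canonical one, so that the Epstein--Penner characterization applies verbatim. A secondary technical point is handling the degenerate case where four vertices are concyclic on the sphere: there the two ideal faces are coplanar in $\H^3$, $\alpha_e=\pi$, and $e$ is interior to a Delaunay-decomposition face — this is where the "moreover" part gets proven, and it requires noting that concyclicity on $\S^2$ passes to concyclicity of the subtending Euclidean triangles under $l\mapsto 2\sin(l/2)$, hence to coplanarity of the ideal faces. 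Once the local lemma is in hand, assembling the global statement is routine.
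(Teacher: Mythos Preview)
Your local reduction is exactly right and matches the paper: both convexity of $P(d,\mathcal T)$ and the Delaunay property are edge-by-edge conditions, so it suffices to compare $\alpha_e$ with the local Delaunay inequality at each $e$.

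Where you diverge from the paper is in the chain you propose for the local step. The paper does not go through $\Phi$, Lemma~\ref{phi1}, and the Epstein--Penner construction at all. It argues in two lines: place the marked point $B$ at the centre of the Poincar\'e ball, so that $\partial_\infty\H^3$ is literally the unit sphere carrying $d$; then the ideal face over a triangle $T$ is a geodesic plane whose boundary circle on $\S^2$ is the circumscribed circle of $T$, and the hyperbolic dihedral angle between two such planes equals the intersection angle of their boundary circles (conformality of the ball model). Hence $\alpha_e\le\pi$, $=\pi$, $>\pi$ correspond exactly to the local Delaunay inequality holding strictly, with equality, or failing --- which also yields the ``moreover'' clause immediately.

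Your detour has a genuine soft spot at the step you label ``Epstein--Penner''. That dictionary relates Euclidean Delaunay for $\Phi(d)$ to convexity of a hull in Minkowski space, or equivalently to the Epstein--Penner decomposition of the associated decorated cusp-metric; it does \emph{not} by itself compute the extrinsic dihedral angle $\alpha_e$ of the $3$-dimensional body $P(d,\mathcal T)$ in $\H^3$. To bridge that you would need to identify the Epstein--Penner structure of $\Phi(d)$ with the boundary $(g_d,h_d)$ of $P(d,\mathcal T)$ and then argue that intrinsic Epstein--Penner convexity coincides with the extrinsic condition $\alpha_e\le\pi$. The first identification is the content of Lemma~\ref{connection}, which is proved \emph{using} Lemma~\ref{delaunay}, so invoking it here would be circular; the second is essentially the direct conformal-boundary observation above (or, in the projective picture, the remark that in the Klein model with $B$ at the centre the ideal faces are the flat inscribed Euclidean triangles, and hyperbolic convexity equals Euclidean convexity of the inscribed surface --- which is the content of the paragraph preceding Lemma~\ref{phi1}). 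Either way you end up needing the paper's one-line argument, so you may as well use it from the start and drop the detour through $\Phi$.
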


\begin{proof}
Indeed, the dihedral angle of the intersection of two planes in $\H^3$ is equal to the intersection angle of the two circles in $\partial_{\infty} \H^3$ that are their boundaries at infinity. Clearly, the conformal structure at $\partial_{\infty} \H^3$ coincides with the conformal structure of the unit sphere. This implies the desired statement.
\end{proof}

This shows that the face decomposition of $P(d, \mathcal T)$ is exactly the Delaunay decomposition containing $\mathcal T$.
Now we are ready to show that it exists and is uniquely determined by $d$. 

\begin{proof}[Proof of Lemma~\ref{delaun}.]
Let $\mathcal T$ and $\mathcal T'$ be two different Delaunay triangulations of $d$, $e \in E(\mathcal T)$ and $p \in S$ be its intersection point with an edge $e' \in E(\mathcal T')$ in their realizations in $d$. Due to Lemma~\ref{delaunay}, both ideal cone-polyhedra $P=P(d, \mathcal T)$ and $P'=P(d, \mathcal T')$ are convex.

By $\mu$, $\mu'$ we denote the distance functions from the boundaries of $P$, $P'$ respectively to the marked point where the boundaries are identified with $S \backslash V$. From the convexity of $P'$, by considering a semi-ideal triangle subtended by $e$ one sees that $\mu'(p) \geq \mu(p)$. Similarly, by considering the semi-ideal triangle subtended by $e'$ we see that $\mu(p) \geq \mu(p')$. Thus, $\mu(p)=\mu'(p)$.

The union of $\mathcal T$ and $\mathcal T'$ cuts $(S, d)$ into convex polygons. We can cut it further without adding new vertices to obtain a triangulation. This decomposes $P$, $P'$ into tetrahedra with some ideal vertices and some non-ideal ones, all with a marked non-ideal point. It is easy to see that if two hyperbolic tetrahedra, both with a marked non-ideal point, have isometric spherical links at the marked points and equal distances from these points to the respective non-ideal points, then the tetrahedra are isometric. This allows us to construct a marked isometry between $P$ and $P'$. Thus, both $\mathcal T$ and $\mathcal T'$ refine the face decomposition of $P=P'$. Thus, the Delaunay decomposition of $d$ is unique and is the face decomposition of $P$.

Now we show that the Delaunay decomposition exists. Take any triangulation $\mathcal T$ realized by $d$. If it is not Delaunay, then there exists an edge $e \in E(\mathcal T)$ such that its dihedral angle in $P(d, \mathcal T)$ is greater than $\pi$. It is easy to see that $e$ is adjacent to two distinct triangles and that the union of their realizations in $d$ is a convex quadrilateral. Thus, we can flip $e$. The volume of $P(d, \mathcal T)$ increases under this operation. Due to Lemma~\ref{finite}, there are finitely many triangulations realized by $d$, thus, the algorithm finishes in finitely many steps.
\end{proof}

Let $\mathcal T$ be a Delaunay triangulation for $d \in \mathfrak M_{+}(\mathcal T)$. Due to Lemma~\ref{delaunay}, the ideal cone-polyhedron $P(d, \mathcal T)$ is independent of a particular choice of $\mathcal T$ in case if there are several Delaunay triangulations. We denote it just by $P(d)$ and denote its boundary cusp-metric by $g_d$. It also comes with a canonical decoration $h_d$ of $g_d$.

Now we need to return to the second construction of ideal cone-polyhedra. So take a decorated hyperbolic cusp-metric $(g, h)$ on $(S, V)$. We can show

\begin{lm}
\label{triang}
Let $P=\bar P(g, h, \mathcal T, u)$ and $P'=\bar P(g, h, \mathcal T', u)$ be two convex ideal cone-polyhedra. Then they are marked isometric.
\end{lm}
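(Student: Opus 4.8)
The plan is to mimic the proof of Lemma~\ref{delaun}, with the decorated cusp-metric $(g,h)$ together with the vector $u$ now playing the role that the cone-metric $d$ played there; that is, we hold fixed the data ``at infinity'' instead of the data ``at the marked point''. Both $P=\bar P(g,h,\mathcal T,u)$ and $P'=\bar P(g,h,\mathcal T',u)$ are convex by hypothesis, and both have ideal boundary exactly $(g,h)$ decorated by $h$; via the central projections from the marked points we identify the boundaries of $P$ and $P'$ with $S\setminus V$ (equipped in both cases with $(g,h)$) and let $\mu,\mu'\colon S\setminus V\to\R$ be the distance functions to the respective marked points. The first observation is that $\mu$ is determined by $(g,h,u,\mathcal T)$ in a very local way: if $e\in E(\mathcal T)$ has endpoints $v,w$, then in $P$ the edge $e$ is realized by the geodesic of $\H^3$ joining the ideal vertices over $v$ and $w$, and the ``wedge'' of $P$ lying radially below $e$ is precisely the semi-ideal triangle $\Delta_e$ whose ideal vertices carry the horocycles of $h$ at signed distance $a_e$ (the signed distance between these horocycles measured along $e$) and whose non-ideal vertex lies at distances $u_v,u_w$; hence $\mu|_e$ equals the distance-to-apex function of $\Delta_e$, which by the cosine laws of Section~\ref{laws} depends only on $u_v$, $u_w$ and $a_e$. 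The analogous description holds for $\mu'$ along the edges of $\mathcal T'$.

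Next I would realize $\mathcal T$ and $\mathcal T'$ geodesically in $(g,h)$ and, for each intersection point $p$ of an edge $e\in E(\mathcal T)$ with an edge $e'\in E(\mathcal T')$, prove $\mu(p)=\mu'(p)$. For $\mu'(p)\ge\mu(p)$: since $P'$ is convex, the flat semi-ideal triangle $\Delta_e$ — which, by the previous paragraph, is intrinsic to $(g,h,u,e)$ and in particular isometric to the corresponding wedge of $P$ — can be placed inside $\overline{P'}$ with its non-ideal vertex at the marked point of $P'$ and its ideal vertices over $v,w$; its ideal edge then lies radially between the marked point of $P'$ and the boundary of $P'$ over $e$, so comparing at $p$ gives $\mu'(p)\ge\mu(p)$. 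The symmetric argument, using convexity of $P$ and the triangle $\Delta_{e'}$, yields $\mu(p)\ge\mu'(p)$, hence equality. Once $\mu=\mu'$ at all such crossing points, I would pass to a common geodesic refinement $\mathcal T''$ of the overlay $\mathcal T\cup\mathcal T'$ (a triangulation of $(g,h)$ whose vertices are the points of $V$ and the crossing points) and, exactly as in the proof of Lemma~\ref{delaun}, cut $P$ and $P'$ into tetrahedra (with some ideal and some non-ideal vertices, each having the marked point as one vertex) over the triangles of $\mathcal T''$. Over each triangle of $\mathcal T''$ the spherical link of the marked point and the distances from the marked point to the remaining three vertices are pinned down by $(g,h,u)$, by the positions of the crossing points, and by the now-common values $\mu=\mu'$ at those points; since a hyperbolic tetrahedron with a distinguished non-ideal vertex is determined up to isometry by the link at that vertex together with the signed distances to the other three vertices (the natural extension of the characterization used in Lemma~\ref{delaun}), the tetrahedron of $P$ over each triangle is isometric to that of $P'$ over the same triangle, by an isometry compatible with the identifications with $(S,V)$. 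Gluing these gives the required marked isometry $P\to P'$.

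The step requiring the most care is the convexity comparison $\mu'(p)\ge\mu(p)$. In Lemma~\ref{delaun} the boundaries of $P$ and $P'$ were identified with $S\setminus V$ through the common link cone-metric, which made it transparent that a geodesic of that metric cones off the marked point to a genuine geodesic plane and hence that the flat wedge sits radially below the corresponding curve. Here the common structure sits at infinity, so one must check carefully that the geodesic $e$ of $(g,h)$, transported into $P'$, has the same radial directions from the marked point of $P'$ as the ideal edge of the flat triangle $\Delta_e$ placed in $\overline{P'}$; this is where the precise interplay between the cusp-metric on the ideal boundary and the central projection has to be used (one can also note here the option of instead lifting to the universal cover and identifying the developed image of $P$ as the convex hull of a vertex configuration manufactured from $(g,h,u)$ alone, which trades this difficulty for a careful analysis of the holonomy). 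Everything else — the locality of $\mu$ along edges, the tetrahedral decomposition along $\mathcal T''$, and the rigidity of a tetrahedron given its link at the marked vertex and the distances to its other vertices — is elementary and was already present, in the marked-point-sided form, in the proof of Lemma~\ref{delaun}.
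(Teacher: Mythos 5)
Your overall skeleton is the same as the paper's: restrict the distance functions to the edges, prove $\mu=\mu'$ at the crossing points of $\mathcal T$ and $\mathcal T'$, then cut along the overlay into tetrahedra and invoke rigidity of a tetrahedron given suitable boundary data. But the central step --- the comparison $\mu'(p)\ge\mu(p)$ along an edge $e$ of $\mathcal T$ --- does not work as you set it up. The flat semi-ideal triangle $\Delta_e$ has decorated ideal-edge length $a_e$, where $a_e$ is the length of the $g$-geodesic $e$ measured \emph{intrinsically on the pleated boundary} of $P'$. The two ideal vertices of $P'$ over $v$ and $w$ are joined extrinsically by an $\H^3$-chord whose decorated length is strictly smaller than $a_e$ whenever $e$ crosses an edge of $\mathcal T'$ transversally (i.e., exactly in the nontrivial case), since a broken decorated geodesic between two decorated ideal points is longer than the straight one. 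Hence $\Delta_e$ cannot be isometrically placed in $\overline{P'}$ with its non-ideal vertex at the marked point and its ideal vertices over $v,w$: the three pairwise (decorated) distances do not match. The containment argument from Lemma~\ref{delaun} worked precisely because there the shared data was the link at the marked point, so the wedge of $P'$ over $e$ developed onto a hyperbolic cone with the \emph{same} apex angle and the same decorated ideal endpoints as $\Delta_e$; here the shared data sits at infinity and the apex angles of the two wedges differ, so the two pictures cannot be superimposed. The difficulty you flag (``same radial directions'') is therefore not a technical check one can carry out --- the placement itself fails --- and the alternative you mention (developing $P$ as a convex hull in $\H^3$) is also unavailable as stated, since $P$ is a cone-manifold with singular locus at the marked point and along the interior edges, not a quotient of a convex body.

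What the paper does instead is a one-dimensional function comparison: parametrize $e$ by the signed distance to the horocycle at $v$, write $f=\cosh\mu|_e=b\cosh(t-a)$ and $F=\cosh\mu'|_e$, which is piecewise of the same form $b_i\cosh(t-a_i)$ with concave kinks by convexity of $P'$; convexity orders the parameters $a_0<\dots<a_k$ (Claim~\ref{claim2}), and the asymptotic behaviour at the two cusps --- which encodes the shared data $u_v$, $u_w$ and the decoration $h$ --- pins $f$ against the first and last pieces of $F$ via Claim~\ref{claim3}. Assuming $f>F$ somewhere then forces both $a<a_k$ and $a_k<a$, a contradiction. You would need to supply an argument of this kind (or an equivalent one) to close the gap. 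A secondary, fixable issue: in your final tetrahedron-rigidity step you appeal to the link at the marked point, but that link is not shared between $P$ and $P'$ a priori (it is essentially what the lemma is proving); the correct invariant, as in the paper, is the boundary face of each tetrahedron in $(g,h)$ together with the signed distances from its three vertices (or their horospheres) to the marked point.
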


\begin{proof}
The proof is similar to the proof of Lemma~\ref{delaun} just above, but is more involved as we need to use that the distance functions of $P$ and $P'$ coincide asymptotically at the cusps. This is almost identical to the proof of a similar fact in \cite[Section 4.1]{Pro1}, but we sketch it here for self-completeness. The proof was inspired by \cite[Section 2.3]{BI} or \cite[Section 3.1]{FI1}, but the situation there was dealing with compact boundaries, so without the need to investigate the asymptotic behaviour.

Consider an edge $e$ of $\mathcal T$. We parametrize it by the signed distance along $e$ to the horocycle of $h$ at one of its endpoints $v$. Let $\mu$, $\mu'$ be the distance functions of $P$ and $P'$ respectively and $f$, $F$ be the restrictions of $\cosh \mu$, $\cosh \mu'$ to $e$. 

The function $f$ has the form 
$$f(t)=b\cosh(t-a).$$ 
It is important to note that two functions of this form either coincide or have at most one point in common.
There exist real numbers $t_1< \ldots< t_k \in \R$ such that at $(t_i, t_{i+1})$ the function $F(t)$ has the form
$$F(t)=b_i\cosh(t-a_i).$$
Here we also assume $t_0:=-\infty$, $t_{k+1}:=+\infty$. It might happen that $k=0$.
By $F_i(t)$ we denote the function $b_i \cosh(t-a_i)$ extended to all $\R$. It is clear that for all $i=0, \ldots, k-1$ we have $F_i(t) > F_{i+1}(t)$ for $t> t_{i+1}$ and $F_i(t) < F_{i+1}(t)$ for $t<t_{i+1}$. By induction we get that $F_i(t) > F(t)$ for all $t \notin (t_i, t_{i+1})$.

We prove that $F(t) \geq f(t)$ for all $t \in \R$. We need two simple statements.

\begin{cla}
\label{claim2} 
Let $$f_1(x)=b_1\cosh(x-a_1),$$ $$f_2(x)=b_2\cosh(x-a_2)$$ and for $t \in \R$ we have $f_1(t)=f_2(t)$ and $\dot f_1(t)> \dot f_2(t)$. Then $a_2 > a_1$.
\end{cla}

The proof is an easy computation, see~\cite[Section 4.1]{Pro1}. It then follows by induction that $a_0 < \ldots < a_k$.

\begin{cla}
\label{claim3}
Let $\psi_1$ and $\psi_2$ be two distinct geodesic lines in $\H^2$ meeting at a point $A \in \partial_{\infty} \H^2$ and $o \in \H^2$ be outside of the angle formed by $\psi_1$, $\psi_2$. Let $A$ be decorated by an horocycle and $\psi_1$, $\psi_2$ be parametrized by the signed distance to this horocycle. Denote the hyperbolic cosines of the distance functions from $\psi_1$ and $\psi_2$ to $o$ by $$f_1(t)=b_1\cosh(t-a_1),$$  $$f_2(t)=b_2\cosh(t-a_2)$$ respectively. Then ${f_1(t)-f_2(t)}$ has a constant nonzero sign. Besides, if $f_1(t)>f_2(t)$, then $a_2>a_1$.
\end{cla}

The proof is straightforward. Now consider $F_0(t)$ and $f(t)$. Their difference has a constant sign. Suppose that $F_0(t) < f(t)$. Then Claim~\ref{claim3} implies that $a<a_0$. Thus, we have $a< a_{k}$. Also we see that $f(t)>F(t)$ for all $t \in \R$. Consider now the other parametrization of $e$ by the signed distance along $e$ to the horocycle at its second endpoint. We get the reparametrized functions $f(a_e-t)$, $F(a_e-t)$, where $a_e$ is the length of $e$ in the metric $g$ decorated by $h$. We similarly obtain $a_e -a <a_e - a_{k+1}$, or $a_{k+1}<a$. This is a contradiction, so $F_0(t)\geq f(t)$ for all $t \in \R$. Similarly, $f(t) \leq F_{k+1}(t)$ for all $t \in \R$. 

Now assume that for some $t' \in \R$ we have $f(t') > F(t')$. Then either for each $t > t'$ or for each $t<t'$ we have $f(t) > F(t)$. But this contradicts with $F_0(t) \geq f(t)$, $F_{k+1}(t) \geq f(t)$ for all $t \in \R$.

Thus, for each point $p \in S$ that is an intersection point of two distinct edges $e \in E(\mathcal T)$ and $e' \in E(\mathcal T)$ realized in $(S, g)$ we have $\mu(p) \leq \mu'(p)$. Similarly we can obtain $\mu(p) \geq \mu'(p)$. Hence, $\mu(p)=\mu'(p)$.

The union of $\mathcal T$ and $\mathcal T'$ cuts $(S, g)$ into convex hyperbolic polygons with some vertices possibly at cusps. We can cut it further to a triangulation without adding new vertices. At each vertex, which is not a cusp, the distances in $P$, $P'$ to the marked point coincide. At each cusp the signed distances from the horospheres determined by $h$ to the marked point coincide. One can see that each tetrahedron is determined up to isometry by this data. It follows that $P$ is marked isometric to $P'$.
\end{proof}

Lemma~\ref{triang} implies that if an ideal cone-polyhedron $\bar P(g, h, \mathcal T, u)$ is convex, we can denote it just by $\bar P(g, h, u)$. By $U'(g, h)$ we denote the set of those $u \in \R^V$ that there exists a convex ideal cone-polyhedron $\bar P(g, h, u)$. In the case $g=g_d$, $h=h_d$ for a spherical cone-metric $d$ we also denote $U'(g_d, h_d)$ by $U'(d)$, and denote $\bar P(g_d, h_d, u)$ by $P(d, u)$. 

\begin{lm}
The set $U'(g, h)$ is open.
\end{lm}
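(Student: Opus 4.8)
The plan is to show that $U'(g,h)$ is open by a standard perturbation argument. Fix $u_0 \in U'(g,h)$, so there exists a convex ideal cone-polyhedron $P_0 = \bar P(g,h,\mathcal T, u_0)$ for some triangulation $\mathcal T$ of $(S,V)$. The strategy is to perturb $u_0$ to a nearby $u$, keep the same combinatorial triangulation $\mathcal T$, check that the semi-ideal tetrahedra over each triangle of $\mathcal T$ continue to exist (non-degenerately) and can still be glued, and finally verify that the resulting polyhedron $\bar P(g,h,\mathcal T,u)$ is still convex, i.e.\ all boundary dihedral angles $\alpha_e$ remain $\le \pi$. There is a small subtlety: some edges of $\mathcal T$ may have $\alpha_e = \pi$ in $P_0$ (the triangulation $\mathcal T$ refines the face decomposition), so we cannot hope to keep $\alpha_e < \pi$ for those edges; but that is not needed, since $\le\pi$ is a closed condition and we only need to rule out $\alpha_e > \pi$.

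**First I would** set up the local coordinates: by Lemma~\ref{triang} a convex polyhedron $\bar P(g,h,\mathcal T,u)$ depends only on $(g,h,u)$, so we may work with the fixed realization of $\mathcal T$ in $(g,h)$. For each triangle $T$ of $\mathcal T$, its three ideal vertices carry canonical horocycles from $h$, which fixes the decorated ideal triangle; the three numbers $u_v, u_w, u_x$ at its vertices prescribe the signed distances from the horospheres to the non-ideal apex $B$. The existence of a non-degenerate semi-ideal tetrahedron with this data is governed by the cosine/sine laws of Section~\ref{laws} (Lemmas~\ref{coslaw}--\ref{perpend}): the relevant quantities, such as the $\rho_{12}^2 = \exp(u_2 - u_1 - a_{12}) - \exp(-2u_1)$ type expressions, are continuous (indeed smooth) functions of $u$, and non-degeneracy is an open condition on these (the apex must lie strictly off the ideal face, and the horospherical ``angles'' must be positive). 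So there is a neighborhood $N$ of $u_0$ in $\R^V$ on which all $|V|$-many (or rather $|F(\mathcal T)|$-many) tetrahedra exist non-degenerately, and the gluing along $h$-horospheres is automatic because the edge lengths in the ideal face are dictated by $(g,h)$ alone and hence match. Thus $\bar P(g,h,\mathcal T,u)$ is a well-defined ideal cone-polyhedron for all $u \in N$.

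**The main point** is then the convexity. Each boundary dihedral angle $\alpha_e$ of $\bar P(g,h,\mathcal T,u)$ is a continuous function of $u$ on $N$: it is assembled from the dihedral angles of the two semi-ideal tetrahedra adjacent to $e$, each of which depends continuously (smoothly) on the edge lengths, which in turn depend continuously on $u$. For an edge $e$ with $\alpha_e(u_0) < \pi$, continuity gives $\alpha_e(u) < \pi$ for $u$ in a smaller neighborhood. For an edge $e$ with $\alpha_e(u_0) = \pi$ — equivalently, by Lemma~\ref{delaunay}-type reasoning, an edge interior to a face of the face decomposition of $P_0$ — the two adjacent tetrahedra are coplanar on the boundary side, and we must check $\alpha_e$ does not exceed $\pi$. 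Here I would argue that such an edge can simply be erased: the two adjacent triangles of $\mathcal T$ form a polygon (a cyclic quadrilateral on the boundary side in $P_0$), and as long as after perturbation this polygon remains convex and inscribed, we still obtain a valid polyhedron — but more robustly, since $E(\mathcal T)$ is finite, intersecting the finitely many neighborhoods on which each $\alpha_e$ with $\alpha_e(u_0)<\pi$ stays below $\pi$, together with $N$ and with a neighborhood controlling the degenerate-edge faces, yields an open set around $u_0$ contained in $U'(g,h)$.

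**The hard part** will be handling the edges with $\alpha_e(u_0) = \pi$ cleanly: one cannot conclude from continuity alone that $\alpha_e$ stays $\le \pi$, since $\pi$ is on the boundary of the allowed range, and a generic perturbation could push it either way. The clean resolution is to pass to the face decomposition $\mathcal T_0$ of $P_0$ (which is a genuine polygonal decomposition by Lemma~\ref{face}), whose cells are inscribed convex polygons, and to observe that the data determining $P_0$ from $(g,h,u_0)$ can equally be read off from $\mathcal T_0$: for $u$ near $u_0$ the inscribed polygons persist (inscribability and convexity of a polygon with vertices near given ones and apex distances near given ones is again an open condition, using that the non-ideal apex projects into the interior), so $\bar P(g,h,\mathcal T_0,u)$ is convex for $u$ near $u_0$; any triangulation refining $\mathcal T_0$ then realizes it, and by Lemma~\ref{triang} $u \in U'(g,h)$. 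This is the genuinely geometric step; everything else is continuity of explicit trigonometric expressions from Section~\ref{laws}.
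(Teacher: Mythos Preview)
Your setup is correct and you have correctly isolated the genuine difficulty: the edges $e$ of $\mathcal T$ with $\alpha_e(u_0)=\pi$, i.e.\ the diagonals interior to the faces of the face decomposition $\mathcal T_0$ of $P_0$. Continuity alone does not keep these $\le\pi$, as you note.

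However, your proposed resolution --- passing to the polygonal face decomposition $\mathcal T_0$ and claiming that ``the inscribed polygons persist'' so that $\bar P(g,h,\mathcal T_0,u)$ is convex for $u$ near $u_0$ --- does not work. The construction $\bar P(g,h,\cdot,u)$ is only defined for triangulations: one builds a semi-ideal \emph{tetrahedron} over each face. Over an ideal polygon $F$ with $n\ge 4$ vertices there is no ``semi-ideal pyramid'' with a planar base and apex at prescribed signed distances $u'_{v_1},\ldots,u'_{v_n}$ from the horospheres, except on a codimension-$(n-3)$ locus in $u$-space: planarity of the base is equivalent to the apex lying at compatible distances from four or more horospheres, which is not an open condition but a closed constraint. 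For generic $u$ near $u_0$ the only thing one can build is $\bar P(g,h,\mathcal T,u)$ for some triangulation $\mathcal T$ refining $\mathcal T_0$, and then the interior diagonals of $F$ acquire dihedral angles $\neq\pi$ --- some of them possibly $>\pi$. So the object ``$\bar P(g,h,\mathcal T_0,u)$'' you invoke simply does not exist for generic perturbations, and the argument stops there.

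The paper's proof handles this point by a different mechanism: a flip algorithm. One first shrinks $N$ so that, for every face triangulation $\mathcal T'$ of $P_0$, all edges with $\alpha_e(u_0)<\pi$ (the ``real'' edges of $P_0$) keep $\alpha_e<\pi$ in $\bar P(g,h,\mathcal T',u')$ for $u'\in N$. Then, starting from some $\bar P(g,h,\mathcal T,u')$, one repeatedly flips any edge with $\alpha_e>\pi$; each flip strictly increases the volume, and the real edges of $P_0$ can never be flipped, so every triangulation encountered is a face triangulation of $P_0$. By Lemma~\ref{face} there are only finitely many of these, so the process terminates at a convex $\bar P(g,h,\mathcal T'',u')$, showing $u'\in U'(g,h)$. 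This is the missing idea in your proposal.
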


\begin{proof}
Let $u \in U'(g, h)$ and $\mathcal T$ be a face triangulation of $\bar P(g,h,u)$. As all semi-ideal tetrahedra determined by $g$, $h$, $\mathcal T$ and $u$ are non-degenerate, they exist also for all $u'$ sufficiently close to $u$. Then the ideal cone-polyhedron $\bar P(g, h, \mathcal T, u')$ is well-defined. However, it can be non-convex. 

Due to Lemma~\ref{face}, there are finitely many face triangulations of $\bar P(g,h,u)$. Choose a neighbourhood $N \ni u$ so that all edges of $\bar P(g,h,u)$ remain to have $\alpha_e<\pi$ also in $\bar P(g, h, \mathcal T', u')$ for all $u' \in N$ and all face triangulations $\mathcal T'$ of $\bar P(g,h,u)$. Then we perform the flip-algorithm for $\bar P(g, h, \mathcal T, u')$ from the proof of Lemma~\ref{delaun}: at each step take a non-convex edge and flip it. The flip is always possible and the volume increases under a flip. Due to our choice of $N$, an edge of $\bar P(g,h,u)$ can never be flipped. Thus, all appearing triangulations are face triangulations of $\bar P(g,h,u)$. Since there are finitely many of them, the algorithm finishes in finitely many steps, providing a triangulation $\mathcal T'$ such that $\bar P(g, h, \mathcal T', u')$ is convex. Hence, $N \subset U'(g,h)$.
\end{proof}

By $U(d)$ we denote the connected component of $U'(d)$ containing $o$. We actually think that $U(g, h)$ is always connected, so $U(d)=U'(d)$. However, we do not need this for the needs of the present paper, so we do not dwell upon this. %Note also that if $u \in \inter(U'(g, h))$, then the marked point in $P(g,h,u)$ belongs to the interior of $P(g,h,u)$.

For a triangulation $\mathcal T$ of $(S, V)$  let $U'(g, h, \mathcal T)$ be the set of $u \in U'(g, h)$ such that $\mathcal T$ is isotopic with respect to $V$ to a face triangulation of $\bar P(g,h,u)$. By $U(d,h,\mathcal T)$ we denote the intersection $U'(g,h,\mathcal T)\cap U(g,h)$. So $U(g, h, \mathcal T)$ provide a locally finite decomposition of $U(g, h)$ into cells with piecewise-smooth boundaries. In case of $g=g_d$, $h=h_d$ we denote $U(g_d, h_d, \mathcal T)$ just by $U(d, \mathcal T)$. 

Using geometric arguments similar to the proof of Lemma 4.14 in~\cite{Pro1} one can show that if $u \in U(g, h, \mathcal T)$, then $\mathcal T$ is an Epstein--Penner triangulation for $g$ and a decoration $h'$ having the horocycle at $v$ at the signed distance $u_v$ from the respective horocycle of $h$. Combined with the Akiyoshi result, this would give another proof of Corollary~\ref{fin}. We will not rely on this, so we do not clarify this more.

Consider two metrics $d , d' \in \mathfrak M_{+,T}(S, V)$. Assume that the respective cusp-metrics $g_{d }, g_{d'}$ are isometric by an isometry isotopic to identity with respect to $V$. Let $h_{d }$ be the decoration of $g_{d }$ coming from the ideal cone-polyhedron $P(d )$. Identify $g_{d }$ and $g_{d'}$ with the help of the isometry we have. However, the decoration $h_{d'}$ coming from the ideal cone-polyhedron $P(d')$ is different from $h_{d }$. By $u_{d }(d') \in \R^V$  we denote the signed distances of the horocycles of $h_{d'}$ from the horocycles of $h_{d }$. Clearly, the ideal cone-polyhedron $P(d')$ coincides with the ideal cone-polyhedron $P(d, u_{d }(d')).$ We can show

\begin{lm}
\label{connection}
Two metrics $d , d' \in \mathfrak M_T(S, V)$ are discretely conformally equivalent if and only if the respective cusp-metrics $g_{d }, g_{d'}$ are isometric by an isometry isotopic to the identity with respect to $V$ and $u_{d }(d') \in U(d )$.
\end{lm}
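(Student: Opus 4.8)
\emph{Proof idea.} I would compare two parametrizations of the same object. On one side there is the discrete conformal class $\mathfrak C_+(d)$, which by Lemma~\ref{connects} is a connected $|V|$--manifold. On the other side there is the open set $U'(d)\subset\R^V$ together with the map $\Psi\colon U'(d)\to\mathfrak M_{+,T}(S,V)$ sending $u$ to the spherical cone-metric cut out on the spherical links at the marked point of the convex ideal cone-polyhedron $P(d,u)=\bar P(g_d,h_d,u)$. The map $\Psi$ is well defined (the links are convex spherical triangles, and a face triangulation of $P(d,u)$ is Delaunay for $\Psi(u)$ by Lemma~\ref{delaunay}), it has $\Psi(o)=d$, and --- once $g_d$ and $g_{d'}$ are identified by an isometry isotopic to the identity rel $V$ --- also $\Psi(u_d(d'))=d'$, because then $\bar P(g_d,h_d,u_d(d'))=P(d')$. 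The one identity doing the computational work is the cosine law of Lemma~\ref{coslaw}: along a face triangulation $\mathcal T$ of $P(d,u)$ it gives $\sin^2\!\big(l_e(\Psi(u))/2\big)=\exp(a_e-u_v-u_w)$ for $e=vw\in E(\mathcal T)$, where $a_e$ is the signed distance between the canonical horocycles of $(g_d,h_d)$ at the ends of $e$; specialising to $u=o$ yields $\sin(l_e(d)/2)=\exp(a_e/2)$.

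For ``$\Leftarrow$'': since $U(d)$ is a connected component of an open subset of $\R^V$ it is path-connected, so fix a path $\gamma$ in $U(d)$ from $o$ to $u_d(d')$; using local finiteness of the decomposition $\{U(d,\mathcal T)\}$ into cells with piecewise-smooth walls, perturb $\gamma$ to cross the walls transversally at finitely many times $0<s_1<\dots<s_N<1$. On each piece $\gamma([s_j,s_{j+1}])\subset U(d,\mathcal T_j)$ the polyhedron $P(d,\gamma(s))=P(\Psi(\gamma(s)),\mathcal T_j)$ is convex, so $\mathcal T_j$ is Delaunay for $\Psi(\gamma(s))$ (Lemma~\ref{delaunay}), and the displayed formula shows $\Psi(\gamma(s_j))$ and $\Psi(\gamma(s_{j+1}))$ are related by the vertex scaling of Definition~\ref{confdefs}(b) with respect to $\mathcal T_j$; at a crossing time $s_j$ the metric $\Psi(\gamma(s_j))$ does not change, its Delaunay decomposition (the face decomposition of $P(d,\gamma(s_j))$) being refined by both $\mathcal T_{j-1}$ and $\mathcal T_j$, so $\Psi(\gamma(s_j))\in\mathfrak D_+(\mathcal T_{j-1})\cap\mathfrak D_+(\mathcal T_j)$ and Definition~\ref{confdefs}(c) applies. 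Concatenating these steps yields a chain witnessing $d=\Psi(o)\sim\Psi(u_d(d'))=d'$.

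For ``$\Rightarrow$'', assume $d\sim d'$. First, $g_d\cong g_{d'}$ by an isometry isotopic to the identity rel $V$: along a defining chain a vertex scaling with respect to a fixed triangulation $\mathcal T_i$ only alters the signed distances $a_e$ between the canonical horocycles by $a_e\mapsto a_e+u_v+u_w$ (by the special case $\sin(l_e/2)=\exp(a_e/2)$), which is a change of decoration and leaves the hyperbolic surface $g_{d_i}$ itself unchanged, while a flip-step does not move the metric; composing the resulting isometries gives the claim. (Alternatively, combine Lemma~\ref{phi2} with the fact from \cite{BPS,GLSW} that the cusp-metric is a complete invariant of a Euclidean discrete conformal class.) Hence every metric in $\mathfrak C_+(d)$ has cusp-metric isometric to $g_d$ via the identity rel $V$, so $u_d(\cdot)\colon\mathfrak C_+(d)\to U'(d)$ is well defined --- its values lie in $U'(d)$ since the ideal cone-polyhedron of any spherical cone-metric is convex, being $P(\cdot,\mathcal T)$ for a Delaunay $\mathcal T$ --- and continuous, as $P(d'')$ depends continuously on the finitely many edge lengths defining $d''$. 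By Lemma~\ref{connects} $\mathfrak C_+(d)$ is path-connected, so pushing a path from $d$ to $d'$ through $u_d(\cdot)$ produces a path in $U'(d)$ from $o=u_d(d)$ to $u_d(d')$; hence $u_d(d')$ lies in the connected component $U(d)$ of $o$.

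The trigonometric identities and the connectedness inputs are routine. The step I expect to require genuine care is the ``dictionary'' of the second paragraph: that moving \emph{within} a cell of $\{U(d,\mathcal T)\}$ corresponds exactly to a vertex scaling carried out on a \emph{Delaunay} triangulation, and that crossing a wall corresponds exactly to a Delaunay flip with the metric genuinely fixed. This rests on Lemma~\ref{delaunay} and the precise shape of the cosine law, and it also forces one to keep track of isotopy classes coherently (of the face triangulations met along $\gamma$, of the triangulations in Definition~\ref{confdefs}, and of the central-projection identifications of the boundaries of the various polyhedra with $(S,V)$) and to justify the continuity of $u_d(\cdot)$; all of this is elementary but should be written out with some attention.
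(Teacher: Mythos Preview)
Your proposal is correct and follows essentially the same route as the paper: for ``$\Leftarrow$'' you take a path in $U(d)$ from $o$ to $u_d(d')$, subdivide it by the cells $U(d,\mathcal T)$, and use the cosine law $\sin(l_e/2)=\exp((a_e-u_v-u_w)/2)$ together with Lemma~\ref{delaunay} to produce the chain of Definition~\ref{confdefs}; for ``$\Rightarrow$'' you observe that a vertex scaling changes only the decoration (so the Penner coordinates determine the same cusp-metric), and then push the connectedness of $\mathfrak C_+(d)$ (Lemma~\ref{connects}) through the continuous map $u_d(\cdot)$ to land in the component $U(d)$. The paper does exactly this, with the only cosmetic difference that it reduces the ``$\Rightarrow$'' cusp-metric step to a single vertex-scaling move and phrases it explicitly via Penner coordinates, whereas you run it along the full defining chain.
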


\begin{proof}
Let $u'$ be in $U(d )$ and $d'$ be the spherical cone-metric induced on the spherical link of the marked point in $P(d, u')$. We show that $d'$ is discretely conformally equivalent to $d$. Consider a path $u_t$ connecting $u'$ with the origin $o$. As the decomposition of $U(d)$ into cells $U(d, \mathcal T)$ is locally finite and piecewise-smooth, we can choose $u_t$ so that it intersects the boundaries finitely many times. Let $u_1, \ldots, u_k$ be the intermediate points of the path at the boundaries of $U(d, \mathcal T)$ and $d_1, \ldots, d_k$ be the induced spherical cone metrics. We also set $u_0:=o$, $u_{k+1}:=u'$, $d_0:=d$, $d_{k+1}:=d'$. By construction, each $u_i$ and $u_{i+1}$ belongs to the same $U(d, \mathcal T)$, we denote this $\mathcal T$ by $\mathcal T_i$. Due to Lemma~\ref{delaunay}, $d_i \in \mathfrak D_+(\mathcal T_i)$. So we need to check that $d_i$ and $d_{i+1}$ are discretely conformally equivalent in the sense of (b) from Definition~\ref{confdefs}.

Let $e$ be an edge of $\mathcal T_i$ with endpoints $v$ and $w$. Then, due to Lemma~\ref{coslaw}, we have
$$\cos (l_e(d_i))=1-2\exp(a_e-u_{i,v}-u_{i,w}),$$
$$\cos (l_e(d_{i+1}))=1-2\exp(a_e-u_{i+1,v}-u_{i+1,w}).$$
Here $a_e$ is the length of the edge $e$ in the metric $g_d$ with the decoration $h_d$. We can rewrite it as
$$\sin\left(\frac{l_e(d_i)}{2}\right)=\sqrt{\frac{1-\cos l_e(d_i)}{2}}=\exp\left(\frac{a_e-u_{i,v}-u_{i,w}}{2}\right),$$
$$\sin\left(\frac{l_e(d_{i+1})}{2}\right)=\sqrt{\frac{1-\cos l_e(d_{i+1})}{2}}=\exp\left(\frac{a_e-u_{i+1,v}-u_{i+1,w}}{2}\right),$$
$$\sin\left(\frac{l_e(d_{i+1})}{2}\right)=\exp\left(\frac{u_{i,v}-u_{i+1,v}}{2}+\frac{u_{i,w}-u_{i+1,w}}{2}\right)\sin\left(\frac{l_e(d_{i})}{2}\right).$$
This shows that each $d_i$ and $d_{i+1}$ are discretely conformally equivalent and so are $d$ and $d'$.

Now suppose that we have two discretely conformal metrics $d$ and $d'$. We want to show that $g_d$ is marked isometric to $g_{d'}$. Due to Definition~\ref{confdefs}, the uniqueness of Delaunay decomposition and Lemma~\ref{delaunay}, it is enough to show this under the assumption that both $d, d' \in \mathfrak D_+(\mathcal T)$ for some triangulation $\mathcal T$. Then there exists $u \in \R^V$ such that for each edge $e$ of $\mathcal T$ with endpoints $v$ and $w$ we have
\begin{equation}
\label{t1}
\sin\left(\frac{l_e(d')}{2}\right)=\exp\left(\frac{u_v+u_w}{2}\right)\sin\left(\frac{l_e(d)}{2}\right).
\end{equation}

Note that the lengths $a_e$, $a'_e$ of the edge $e$ in the cusp-metric $g_d$ with the decoration $h_d$ and in $g_{d'}$ with $h_{d'}$ respectively satisfy
\begin{equation}
\label{t2}
\sin\left(\frac{l_e(d)}{2}\right)=\exp\left(\frac{a_e}{2}\right),
\end{equation}
\begin{equation}
\label{t3}
\sin\left(\frac{l_e(d')}{2}\right)=\exp\left(\frac{a'_e}{2}\right).
\end{equation}
For each $v \in V$ choose at the respective cusp of $P(d')$ a horoball so that the marked point is at the signed distance $-u_v$ from it. This determines a decoration $h'$ of $g_{d'}$. It follows from (\ref{t1}), (\ref{t2}) and (\ref{t3}) that the length of $e$ in $g_{d'}$ with the decoration $h'$ is $a_e$. Thus, the pairs $(g_d, h_d)$ and $(g_{d'}, h')$ have the same Penner coordinates. It follows that $g_{d'}$ is marked isometric to $g_d$.

The operation above allows us to define a map from $\mathfrak C_+(d)$ to $U'(d)$, which is continuous. By Corollary~\ref{connects}, $\mathfrak C_+(d)$ is connected. It follows that for every discretely conformal metrics $d$ and $d'$ we have $u_d(d') \in U(d)$.
\end{proof}

%Fix $d \in \mathfrak M_T(S, V)$ and let $d' \in \mathfrak C(d)$. The ideal cone-polyhedron $P(d)$ comes with a canonical decoration $h$ of $\bar d$. With the help of Lemma~\ref{connection} we can consider $h$ as a decoration on the boundary of the ideal cone-polyhedron $P(d')$. Let $\mathcal T$ be a face triangulation of $P(d')$. For each triangle $T$ embed the respective tetrahedron $\bar T$ of $P(d')$ to $\H^3$. Every canonical horocycle of $h$ at the vertices of $T$ extends to a unique horoball. For a vertex $v$ of $T$ let $u_v$ be the distance from this horoball to the non-ideal vertex of $\bar T$. 
%
%We can restore $P(d')$, and in particular $d'$, by the data $\bar d$, $h$, $\mathcal T$ and $u \in \R^V$. As $\bar d$ and $h$ are determined by $d$, we can denote $P(d')$ by $P(d, \mathcal T, u)$.
%
%\begin{lm}
%\label{triang}
%Let $P(d, \mathcal T, u)$ and $P(d, \mathcal T', u)$ be two convex ideal cone-polyhedra. Then they are canonically isometric.
%\end{lm}
%
%We restrict ourselves to convex ideal cone-polyhedra. Due to Lemma~\ref{triang}, we can denote $P(d, \mathcal T, u)$ just by $P(d, u)$.
%
%Let $U(d) \subset \R^V$ be the set of $u \in \R^V$ such that there exists an ideal convex cone-polyhedron $P(d, u)$.

%\begin{lm}
%The set $U(d)$ is closed.
%\end{lm}

The construction above produces a continuous map $\Psi_d: U(d) \rightarrow \mathfrak C_+(d)$, which is injective due to Lemmas~\ref{delaunay} and~\ref{delaun}. It was also shown that it has a continuous injective inverse. Hence, $\Psi_d$ is a homeomorphism. One can show that it is $C^1$, but we will only use that the curvature function $\kappa \circ \Psi_d$ is $C^1$. We will obtain this in next subsection. 

\begin{rmk}
\label{inverse}
It follows from the considerations above that if $u, u' \in U(g,h,\mathcal T)$ for some triangulation $\mathcal T$, then the respective spherical cone-metrics $d$, $d'$ satisfy $d, d' \in \mathfrak D_+(\mathcal T)$ and for each $e \in E(\mathcal T)$ with endpoints $v$, $w$ we have
$$\sin\left(\frac{l_e(d')}{2}\right)=\exp\left(\frac{u_v+u_w-u'_v-u'_w}{2}\right)\sin\left(\frac{l_e(d)}{2}\right).$$
\end{rmk}

Also Lemma~\ref{connection} and Corollary~\ref{fin} in turn imply

\begin{crl}
\label{finc}
Let $d \in \mathfrak M_{+, T}(S, V)$. There are finitely many triangulations $\mathcal T$ of $(S, V)$ such that $U(d, \mathcal T)$ is non-empty.
\end{crl}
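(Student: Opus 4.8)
The plan is to derive the finiteness of the nonempty cells $U(d,\mathcal T)$ directly from the already established finiteness of the nonempty cells $\mathfrak D_+(\mathcal T)\cap\mathfrak C_+(d)$ (Corollary~\ref{fin}), using the correspondence between $U(d)$ and $\mathfrak C_+(d)$ that was set up through ideal cone-polyhedra in Lemma~\ref{connection}.

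First I would fix a triangulation $\mathcal T$ with $U(d,\mathcal T)\neq\emptyset$ and choose any $u\in U(d,\mathcal T)$. Running the first half of the proof of Lemma~\ref{connection}, the convex ideal cone-polyhedron $P(d,u)$ induces a spherical cone-metric $d'$ on its spherical link at the marked point, and by the definition of $U(d,\mathcal T)$ together with Lemma~\ref{delaunay} the triangulation $\mathcal T$ is a face triangulation of $P(d,u)$ and hence a Delaunay triangulation of $d'$; thus $d'\in\mathfrak D_+(\mathcal T)$. The same argument (the computation with Lemma~\ref{coslaw} in the proof of Lemma~\ref{connection}) shows that $d'$ is discretely conformally equivalent to $d$, i.e.\ $d'\in\mathfrak C_+(d)$ --- this is exactly the statement that $\Psi_d$ maps $U(d)$ into $\mathfrak C_+(d)$. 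Consequently $\mathfrak D_+(\mathcal T)\cap\mathfrak C_+(d)\ni d'$ is nonempty.

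Then I would invoke Corollary~\ref{fin}: only finitely many triangulations $\mathcal T$ satisfy $\mathfrak D_+(\mathcal T)\cap\mathfrak C_+(d)\neq\emptyset$. By the previous paragraph every $\mathcal T$ with $U(d,\mathcal T)\neq\emptyset$ lies among these, which yields the claim.

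I do not expect a real obstacle here; all the substance is contained in Lemma~\ref{connection} and Corollary~\ref{fin}, and the present statement is a bookkeeping consequence. The only point needing a word of care is that a \emph{single} point $u$ of the cell $U(d,\mathcal T)$ already produces a metric lying in $\mathfrak D_+(\mathcal T)$ (and not merely in its closure); this is immediate from the definition of $U(d,\mathcal T)$ as the set of $u$ for which $\mathcal T$ is isotopic with respect to $V$ to a face triangulation of $\bar P(g_d,h_d,u)$, combined with Lemma~\ref{delaunay} identifying face triangulations with Delaunay triangulations of the boundary cone-metric.
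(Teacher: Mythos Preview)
Your proposal is correct and follows exactly the route indicated in the paper: the corollary is stated there as an immediate consequence of Lemma~\ref{connection} together with Corollary~\ref{fin}, and you have simply spelled out the (only) natural way to make that implication precise via $\Psi_d$ and Lemma~\ref{delaunay}.
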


\subsection{Discrete curvature}

Let $(g, h)$ be a decorated cusp metric. For a convex ideal cone polyhedron $P=\bar P(g, h, u)$ define its \emph{total discrete curvature} $H(P) \in \R$ as
$$H(P)=-2\vol (P)+\sum_{v \in V} u_v \kappa_v + \sum_{e \in E(\mathcal T)} a_e(\pi-\alpha_e).$$
Here $\kappa_v$ are the curvatures of the interior edges of $P$, $\mathcal T$ is a face triangulation of $P$, $\alpha_e$ are the dihedral angles of edges of $P$ and $a_e$ are their lengths in $g$ with $h$.

This defines the \emph{discrete curvature functional} $H$ over $U'(g,h)$. It is also sometimes called \emph{the discrete Hilbert--Einstein functional}. Here the sum $-2\vol (P)+\sum_{v \in V} u_v \kappa_v$ can be regarded as the discrete version of the integral of the scalar curvature, and the sum  $\sum_{e \in E(\mathcal T)} a_e(\pi-\alpha_e)$ is the discrete version of the integral of the mean curvature of the boundary. This functional was first introduced by Volkov in his PhD thesis from 1955 to give a variational proof of the Alexandrov realization theorem, see~\cite{Vol1, Vol2}. It was notably used in various similar problems since then, see~\cite{Izm} for a survey and~\cite{BI, FI1, FI2, Pro1, Pro2} for examples of its use.

\begin{lm}
\label{firstder}
The functional $H$ is $C^1$ and we have
$$\frac{\partial H}{\partial u_v}=\kappa_v.$$
\end{lm}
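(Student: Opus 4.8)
The plan is to deduce the formula $\partial H/\partial u_v=\kappa_v$ from the Schläfli differential formula applied to the semi-ideal tetrahedra building the ideal cone-polyhedron, to obtain from this a cellwise smoothness statement, and then to upgrade it to a global $C^1$ statement by a softness argument. Throughout, the decisive structural point is that the edge lengths $a_e$ occurring in the definition of $H$ are measured in the \emph{fixed} decorated cusp-metric $(g,h)$, so they do not depend on $u$.

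First I would fix a triangulation $\mathcal T$ and work on the open set of those $u\in\R^V$ for which every semi-ideal tetrahedron of $\bar P(g,h,\mathcal T,u)$ is non-degenerate; this set contains the cell $U'(g,h,\mathcal T)$. By the cosine and sine laws of Section~\ref{laws} each such tetrahedron depends real-analytically on $u$, so $\vol(\bar P)$, the cone angles $\omega_v=2\pi-\kappa_v$ and the boundary dihedral angles $\alpha_e$ are real-analytic in $u$, and hence so is $H$. Then I would apply the Schläfli formula $2\,d\vol(\bar T)=-\sum_E\ell_E\,d\theta_E$ to each semi-ideal tetrahedron $\bar T=BA_1A_2A_3$, the sum running over its six edges, with $\ell_E$ equal to $u_v$ (the signed distance from $B$ to the canonical horosphere) on the edges $BA_v$ and to $a_e$ (the signed distance between canonical horospheres) on the edges $A_iA_j$; this decorated version follows from the classical Schläfli formula by the same limiting procedure that produces Lemmas~\ref{coslaw}--\ref{perpend} (see also \cite{Pro1}). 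Summing over all tetrahedra of $\mathcal T$, the edges $BA_v$ around a fixed $v\in V$ carry the common length $u_v$ and their dihedral angles add up to $\omega_v$, while each $e=A_iA_j\in E(\mathcal T)$ is shared by two tetrahedra whose dihedral angles at $e$ add up to $\alpha_e$ and which carry the common length $a_e$; therefore
$$2\,d\vol(\bar P)=-\sum_{v\in V}u_v\,d\omega_v-\sum_{e\in E(\mathcal T)}a_e\,d\alpha_e=\sum_{v\in V}u_v\,d\kappa_v+\sum_{e\in E(\mathcal T)}a_e\,d(\pi-\alpha_e).$$
Substituting this into $dH=d(-2\vol)+\sum_v(\kappa_v\,du_v+u_v\,d\kappa_v)+\sum_e a_e\,d(\pi-\alpha_e)$ and cancelling (here using that $a_e$ is constant in $u$), one gets $dH=\sum_v\kappa_v\,du_v$ on this open set, so $H$ is smooth there with $\partial H/\partial u_v=\kappa_v$.

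To globalize, I would first note that $H(\bar P(g,h,\mathcal T,u))$ is independent of the auxiliary choice of $\mathcal T$: by Lemma~\ref{triang} the polyhedron $P=\bar P(g,h,u)$ is well-defined up to marked isometry, $\vol(P)$ and the $\kappa_v$ are isometry invariants, and $\sum_{e\in E(\mathcal T)}a_e(\pi-\alpha_e)$ reduces to the intrinsic sum over the genuine edges of $P$ (those with $\alpha_e\neq\pi$). Each term varies continuously with $u$ — when an edge degenerates $\pi-\alpha_e\to 0$ — so $H$ is continuous on the whole domain $U'(g,h)$, and likewise $u\mapsto(\kappa_v(u))_{v\in V}$ is continuous there. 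The cells $U'(g,h,\mathcal T)$ cover $U'(g,h)$ locally finitely, and on the interior of each $H$ is smooth with gradient $(\kappa_v)_v$, so $H$ is locally Lipschitz. Hence on every line segment in $U'(g,h)$ the function $H$ is absolutely continuous with a.e.\ derivative given by the continuous function $s\mapsto\sum_v\kappa_v\,\dot u_v$; by the fundamental theorem of calculus $H$ is differentiable everywhere with $\partial H/\partial u_v=\kappa_v$, and since $(\kappa_v)_v$ is continuous, $H\in C^1$.

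I expect the main obstacle to be the bookkeeping in the summation over tetrahedra — verifying that the contributions of the radial edges $BA_v$ and of the shared boundary edges $A_iA_j$ assemble exactly into the terms $u_v\,d\omega_v$ and $a_e\,d\alpha_e$ — together with pinning down the precise form of the Schläfli formula for decorated semi-ideal tetrahedra (equivalently, controlling the behaviour of the improper, but finite, volume integrals near the ideal vertices).
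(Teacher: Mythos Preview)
Your proposal is correct and follows essentially the same route as the paper: apply the decorated Schl\"afli formula to each semi-ideal tetrahedron, sum over the triangulation to obtain $dH=\sum_v\kappa_v\,du_v$ on the interior of each cell $U(g,h,\mathcal T)$, and then globalize across cell boundaries. The only minor difference is in the last step: the paper argues directly that the one-sided directional derivatives at a boundary point all equal $\kappa_v$, hence the partial derivatives exist and are continuous; you instead pass through a local Lipschitz/absolute continuity argument, which is a valid and slightly more explicit way of reaching the same conclusion.
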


\begin{proof}
The proof basically is an application of the celebrated Schl\"affli formula, which was adapted to partially ideal hyperbolic polyhedra in~\cite[Theorem 14.5]{Riv2}.

Consider a decorated semi-ideal tetrahedron $\bar T$ with the edge lengths of the ideal face $a_1$, $a_2$, $a_3$, their dihedral angles $\alpha_1$, $\alpha_2$, $\alpha_3$ respectively, the lengths of semi-ideal edges $u_1$, $u_2$, $u_3$ and their dihedral angles $\omega_1$, $\omega_2$, $\omega_3$ respectively. Then the Schl\"affli formula says that
$$-2 d\vol(\bar T)=\sum u_i d\omega_i+\sum a_i d\alpha_i.$$

Suppose that $u$ is in the interior of a cell $U(g, h, \mathcal T)$. Then the combinatorics does not change locally around $P$ and, as the dihedral angles are smooth functions of the edge lengths of semi-ideal polyhedra, we see that $H$ is smooth. Summing all the equalities for particular tetrahedra we get
$$-2d\vol(P)=-\sum_{v\in V} u_v d\kappa_v+\sum_{e\in \mathcal T} a_e d\alpha_e.$$
This implies 
$$dH(P)=\sum_{v \in V}\kappa_v d u_v,$$
which is the desired statement.

When $u$ is not in the interior of $U(g,h,\mathcal T)$, we recall that the decomposition of $U(g,h)$ into cells $U(g,h,\mathcal T)$ is finite and the cells have piecewise-smooth boundaries. Thus, one can compute directional derivatives at $u$ with the help of the method above. One sees then that all the partial derivatives
$$\frac{\partial H}{\partial u_v}=\kappa_v
$$
exist and are continuous around $u$. This implies that $H$ is $C^1$. 
\end{proof}

Now we are going to show that $H$ is actually $C^2$ and to compute its second derivatives. By $\vec E_v(\mathcal T)$ we denote the set of oriented edges of $\mathcal T$ emanating from $v \in V$. By $\vec E_{vw}(\mathcal T)$ we denote the set of oriented edges emanating from $v \in V$ and ending at $w \in V$. For a convex ideal cone-polyhedron $P=\bar P(g, h, u)$ and an oriented edge $\vec e$ of its face triangulation we denote by $\alpha^+_{\vec e}$ and by $\alpha^-_{\vec e}$ the dihedral angles of the two tetrahedra adjacent to $\vec e$. By $\rho_{\vec e}$ we denote the length of the part of the canonical horocycle between $\vec e$ and the interior edge of $P$ at the starting endpoint of $\vec e$.

\begin{lm}
\label{secder}
The functional $H$ is $C^2$ and we have
\begin{equation}
\label{dervw}
v\neq w: \frac{\partial^2 H}{\partial u_v\partial u_w}=\frac{\partial \kappa_v}{\partial u_w}=\frac{\partial \kappa_w}{\partial u_v}=\sum_{\vec e \in \vec E_{vw}(\mathcal T)}\frac{\cot(\alpha^+_{\vec e})+\cot(\alpha^-_{\vec e})}{\rho_{\vec e}\exp (u_v)\sin (l_{\vec e}) },
\end{equation}
\begin{equation}
\label{dervv}
v=w: \frac{\partial^2 H}{\partial^2 u_v}=\frac{\partial \kappa_v}{\partial u_v}=-\sum_{\vec e \in \vec E_{v}(\mathcal T)}\frac{\cot(\alpha^+_{\vec e})+\cot(\alpha^-_{\vec e})}{\rho_{\vec e}\exp (u_v)\sin (l_{\vec e}) }\cos(l_{\vec e})+\sum_{\vec e \in \vec E_{vv}(\mathcal T)}\frac{\cot(\alpha^+_{\vec e})+\cot(\alpha^-_{\vec e})}{\rho_{\vec e}\exp (u_v)\sin (l_{\vec e})}.
\end{equation}
\end{lm}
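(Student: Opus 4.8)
The plan is to mirror the structure of the proof of Lemma~\ref{firstder}. On the interior of a cell $U(g,h,\mathcal T)$ the face combinatorics is locally constant and every dihedral angle is a smooth function of the edge lengths of the (non-degenerate) semi-ideal tetrahedra, so $\kappa$ is smooth there; I would compute $\partial\kappa_v/\partial u_w$ in closed form on these interiors and then check that the resulting expression extends continuously across the finitely many, piecewise-smooth cell boundaries, which yields $\kappa\in C^1$ and hence $H\in C^2$ globally. Continuity across a flip is automatic: on such a boundary the flipped edge $e$ has $\alpha_e=\pi$, i.e.\ $\alpha^+_{\vec e}+\alpha^-_{\vec e}=\pi$ (the two tetrahedra meeting along the corresponding ideal edge are glued along the interior face through $B$, so their dihedral angles at $e$ add up to the boundary angle), hence $\cot\alpha^+_{\vec e}+\cot\alpha^-_{\vec e}=0$ and the term for $e$ drops out of both sums. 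Once $H\in C^2$, the three quantities in the statement agree, by $\partial H/\partial u_v=\kappa_v$ and equality of mixed partials; the symmetry $\partial\kappa_v/\partial u_w=\partial\kappa_w/\partial u_v$ is also visible directly, since $\rho_{\vec e}\exp(u_v)=\cot(l_{\vec e}/2)$ by the sine law (Lemma~\ref{sinlaw}), so that $\rho_{\vec e}\exp(u_v)\sin l_{\vec e}=1+\cos l_{\vec e}$, and $\cot\alpha^\pm_{\vec e}$, $\sin l_{\vec e}$ do not depend on the orientation of $e$.

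The computation rests on a structural reduction. Cut a semi-ideal tetrahedron $\bar T$ over a triangle $T$ of $\mathcal T$ at $v$ by a horosphere centered at the ideal vertex corresponding to $v$: the resulting Euclidean triangle has angles $\omega^{\bar T}_v$ (the dihedral angle of $\bar T$ at the interior edge at $v$, which equals the angle of $T$ at $v$ in the metric $d$) and the two dihedral angles $\alpha^{\bar T}_{e'},\alpha^{\bar T}_{e''}$ of $\bar T$ at the ideal edges $e',e''$ of $T$ emanating from $v$, so $\omega^{\bar T}_v=\pi-\alpha^{\bar T}_{e'}-\alpha^{\bar T}_{e''}$. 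Summing over the tetrahedra around $v$ gives $\omega_v=k_v\pi-\sum_{e\ni v}\alpha_e$, where $k_v$ is the number of triangles of $\mathcal T$ incident to $v$ and $\alpha_e=\alpha^+_{\vec e}+\alpha^-_{\vec e}$; hence $\kappa_v=(2-k_v)\pi+\sum_{e\ni v}\alpha_e$ and $\partial\kappa_v/\partial u_w=\sum_{e\ni v}(\partial\alpha^+_{\vec e}/\partial u_w+\partial\alpha^-_{\vec e}/\partial u_w)$. Everything thus reduces to differentiating the dihedral angle at an ideal edge of a single semi-ideal tetrahedron with respect to each of its three edge-length parameters $u$.

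For this I would use the Euclidean law of cosines in the horospherical link triangle at $v$, whose two sides at the $v$-vertex have length $\rho_{\vec e}=\exp(-u_v)\cot(l_{\vec e}/2)$ (a consequence of Lemma~\ref{coslaw}) and whose third side depends only on $(g,h)$; the input relation is $\cos l_e=1-2\exp(a_e-u_v-u_w)$, whence $\partial l_e/\partial u_w=-\tan(l_e/2)$ if $w$ is an endpoint of $e$ and $0$ otherwise. Differentiating the law of cosines and simplifying by means of the law of sines, the projection formula for the link triangle, and the relation $\omega^{\bar T}_v=\pi-\alpha^{\bar T}_{e'}-\alpha^{\bar T}_{e''}$, one finds two clean outcomes. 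For $v\neq w$: inside a triangle at $v$ containing also $w$, the $u_w$-derivative of the dihedral angle at the edge $e$ between $v$ and $w$ (where $w$ is an endpoint) plus the $u_w$-derivative of the dihedral angle at the other edge of this triangle at $v$ (whose opposite vertex in the triangle is $w$) add up to $\cot\alpha^{\bar T}_{e}/(1+\cos l_e)$; summing the two tetrahedra on $e$ gives the term in~(\ref{dervw}). For $v=w$: the two ``endpoint'' derivatives along the two edges $e',e''$ of a triangle at $v$ add up to $-\cot\alpha^{\bar T}_{e'}\cos l_{e'}/(1+\cos l_{e'})-\cot\alpha^{\bar T}_{e''}\cos l_{e''}/(1+\cos l_{e''})$, which after summing over tetrahedra and regrouping by edges yields the first sum in~(\ref{dervv}); the point requiring care is that when $v=w$ the opposite vertex of an edge at $v$ may itself be $v$, and accounting for this contribution is exactly what produces the extra sum over $\vec E_{vv}(\mathcal T)$.

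I expect the last step to be the main obstacle: keeping track of which edges and which of the two adjacent tetrahedra contribute to a given second partial, performing the trigonometric manipulations that convert raw derivatives of dihedral angles into the symmetric $\cot$-form, and---most delicately---handling loops and multiple edges correctly so that the sums over $\vec E_{vw}(\mathcal T)$, $\vec E_v(\mathcal T)$ and $\vec E_{vv}(\mathcal T)$ come out as stated. By comparison, the continuity-across-flips argument for global $C^2$-regularity is routine.
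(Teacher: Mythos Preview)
Your approach is essentially the same as the paper's: both reduce to a single semi-ideal tetrahedron and differentiate via the Euclidean horospherical link triangle at an ideal vertex, using the chain rule through the $\rho_{\vec e}$'s together with the cosine/sine laws of Section~\ref{laws}. The only cosmetic difference is that the paper differentiates the interior dihedral angle $\omega_v^{\bar T}$ directly, whereas you differentiate the boundary dihedral angles $\alpha_e^{\bar T}$ and invoke $\omega_v^{\bar T}=\pi-\alpha_{e'}^{\bar T}-\alpha_{e''}^{\bar T}$; these are of course equivalent. Your explicit observation that $\cot\alpha^+_{\vec e}+\cot\alpha^-_{\vec e}=0$ on a flip wall, and your symmetry check via $\rho_{\vec e}\exp(u_v)\sin l_{\vec e}=1+\cos l_{\vec e}$, are nice additions that the paper leaves implicit.
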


\begin{proof}
Similarly to the proof of Lemma~\ref{firstder} it is enough to compute the derivatives for the case when $u$ belongs to the interior of a cell $U(g, h, \mathcal T)$. In the general case one checks that the partial derivatives exist and are continuous, so the functional is $C^2$.

The computation follows from the computation of the derivatives of the dihedral angles in a single semi-ideal tetrahedron $\bar T=BA_1A_2A_3$. We inherit the notation from the previous proof and from Section~\ref{laws}. It remains to denote the lengths of the parts of the canonical horocycles in the ideal triangle $A_1A_2A_3$ by $\lambda_1$, $\lambda_2$, $\lambda_3$ respectively. 

The solid angle at the vertex $A_1$ of $\bar T$ cuts off a Euclidean triangle from the canonical horosphere at $A_1$. It has edge lengths $\lambda_1$, $\rho_{12}$ and $\rho_{13}$, and angles $\omega_1$, $\alpha_3$ and $\alpha_2$ respectively. We note that $\lambda_1$ does not change under our infinitesimal deformation, which takes place in a fixed discrete conformal class, so with fixed $(g, h)$. We have
$$\cos \omega_1=\frac{\rho_{12}^2+\rho_{13}^2-\lambda_1^2}{2\rho_{12}\rho_{13}}.$$
Differentiating it we get
$$\frac{\partial \omega_1}{\partial \rho_{12}}=-\frac{\cot(\alpha_3)}{\rho_{12}},~~~~~\frac{\partial \omega_1}{\partial \rho_{13}}=-\frac{\cot(\alpha_2)}{\rho_{13}}.$$
Now we differentiate the cosine law for the semi-ideal triangle $BA_1A_2$ given in Lemma~\ref{coslaw}, use the sine law from Lemma~\ref{sinlaw} (keeping in mind that the parameter $a_{12}$ is fixed) and get the derivatives
$$\frac{\partial \rho_{12}}{\partial u_1}=-\frac{\cot(l_{12})}{\exp(u_1)},~~~~~\frac{\partial \rho_{12}}{\partial u_2}=\frac{1}{\exp(u_1)\sin(l_{12})}.$$
This gives us
$$\frac{\partial\omega_1}{\partial u_1}=\frac{\partial \omega_1}{\partial \rho_{12}}\frac{\partial \rho_{12}}{\partial u_1}+\frac{\partial\omega_1}{\partial \rho_{13}}\frac{\partial\rho_{13}}{\partial u_1}=\frac{1}{\exp(u_1)}\left(\frac{\cot(\alpha_3)\cot(l_{12})}{\rho_{12}}+\frac{\cot(\alpha_2)\cot(l_{13})}{\rho_{13}}\right).$$
One can see that summing this up for all tetrahedra constituting $P$ gives us the first part of (\ref{dervv}). 

We also have
$$\frac{\partial\omega_1}{\partial u_2}=\frac{\partial \omega_1}{\partial \rho_{12}}\frac{\partial \rho_{12}}{\partial u_2}=-\frac{\cot(\alpha_3)}{\rho_{12}\exp(u_1)\sin(l_{12})}.$$
Summing this up for all tetrahedra gives us (\ref{dervw}) and the second part of (\ref{dervv}). The proof is finished.
\end{proof}

In particular, Lemma~\ref{secder} implies that $\kappa\circ\Psi_d$ is a $C^1$-map.

\section{Proof of Theorem~\ref{main}}

\subsection{Outline of the proof}

We fix $d\in\mathfrak M_{+, T}(S, V)$. Abusing the notation, in the rest of the paper we denote $\kappa\circ\Psi_{d}$ just by $\kappa$, so now
$$\kappa: U(d) \rightarrow (-\infty, 2\pi)^V$$
and it is a $C^1$-map.
Denote
$$
U_C(d):=\{u\in U(d):\text{for each $v \in V$, $\kappa_v(u)>0$}\}=\kappa^{-1}((0,2\pi)^V),
$$
which is an open domain in $\mathbb R^V$.
Denote $K$ as the set of $\kappa\in(0,2\pi)^V$ satisfying (\ref{e1}) and (\ref{e2}). Then $K$ is a convex open
domain in $\mathbb R^V$. Then Theorem~\ref{main} is equivalent to the fact that the discrete curvature map $\kappa$ gives a bijection from $U_C(d)$ to $K$.

It follows from Theorem~\ref{LT} that the image of $U_C(d)$ by $\kappa$ is in $K$. Then Theorem~\ref{main} is obtained from the following three lemmas.

\begin{lm}
\label{infrig}
The differential $d\kappa$ is non-degenerate on $U_C( d)$.
\end{lm}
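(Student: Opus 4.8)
The plan is to show that $d\kappa$, which by Lemma~\ref{firstder} equals the Hessian of the discrete Hilbert--Einstein functional $H$ and by Lemma~\ref{secder} is a symmetric matrix with entries computed from cotangents of dihedral angles, has trivial kernel at every $u \in U_C(d)$. Since in the convex spherical setting $H$ is \emph{not} concave, one cannot just invoke definiteness; instead I would interpret a kernel vector geometrically inside the ideal cone-polyhedron $P = P(d,u)$ and derive a contradiction. Fix $u \in U_C(d)$ with face triangulation $\mathcal T$, and suppose $\dot u = (\dot u_v)_{v\in V} \neq 0$ lies in $\ker d\kappa$, i.e.\ $\sum_w \frac{\partial \kappa_v}{\partial u_w}\dot u_w = 0$ for all $v$. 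Pairing with $\dot u$ and using the symmetric formulas (\ref{dervw})--(\ref{dervv}), one gets a sum over oriented edges of $\mathcal T$ of terms of the form $\big(\cot\alpha^+_{\vec e}+\cot\alpha^-_{\vec e}\big)$ times a quadratic expression in the $\dot u$'s weighted by positive factors $1/(\rho_{\vec e}\exp(u_v)\sin l_{\vec e})$. The key is that $\cot\alpha^+_{\vec e}+\cot\alpha^-_{\vec e} \ge 0$ is \emph{not} guaranteed (the polyhedron need only be convex, $\alpha_e \le \pi$, not $\alpha_e \le \pi/2$), so a naive sign argument fails — this is exactly where the spherical case departs from the Euclidean and hyperbolic ones.

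To get around this, I would pass to the dimensional-reduction picture inspired by~\cite{BI, Izm2}: a kernel vector of $d\kappa$ corresponds to an infinitesimal isometric deformation of $P(d,u)$ that fixes all the cone-angles $\omega_v$ (equivalently all $\kappa_v$) while moving the decoration, i.e.\ a nontrivial \emph{infinitesimal flex} of the ideal cone-polyhedron through ideal cone-polyhedra with the same curvatures. Concretely, deform the semi-ideal tetrahedra keeping the ideal faces (hence $(g,h)$) fixed and the lengths $u_v$ changing by $\dot u_v$; the vanishing of $d\kappa\cdot\dot u$ says this assembly closes up to first order around every interior edge. The task is then to show such a flex must be trivial. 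Here I would use the convexity hypothesis $u \in U_C(d)$, meaning all $\kappa_v > 0$, i.e.\ $\omega_v < 2\pi$ at every cone point: this positive curvature should force a maximum-principle / support-function argument. Following the strategy of~\cite{BI}, consider the distance function $\mu$ from the boundary of $P$ to the marked point and the first variation $\dot\mu$ of its hyperbolic cosine under the flex; on each face $\dot\mu$ satisfies a linear second-order ODE along geodesics (differentiating (\ref{d1})), and convexity of $P$ together with $\kappa_v>0$ controls the jump of the derivative of $\dot{(\cosh\mu)}$ across edges and around cone points, so that $\dot{(\cosh\mu)}$ can have no interior strict extremum of the wrong sign — forcing it to be identically the variation coming from merely translating the marked point, which near a genuine cone point of positive curvature is impossible unless $\dot u = 0$.

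The main obstacle, and the step I expect to require the most care, is precisely the sign analysis of the cotangent sums / the behavior of $\dot{(\cosh\mu)}$ at the cone points: unlike the compact-boundary situations of~\cite{BI, Izm2}, here the boundary is a cusp-metric and the faces extend into the cusps, so one must check the asymptotic behavior of the variation at the cusps (as in the proof of Lemma~\ref{triang}) to rule out a flex supported near infinity, and one must genuinely exploit $\kappa_v>0$ rather than a global convexity of $H$. A clean way to organize this is: (i) reduce to a kernel vector $\dot u$; (ii) build the associated infinitesimal flex of $P(d,u)$ fixing $(g_d,h_d)$; (iii) introduce $\phi := \dot{(\cosh\mu)}$, show it is smooth on each face, satisfies the linearized equation $\ddot\phi = \phi$ along unit-speed geodesics with prescribed concave-type kinks across edges coming from convexity; (iv) analyze $\phi$ near each cusp to show it decays, so $\phi$ attains a maximum at an interior point or a cone point; (v) at a cone point with $\kappa_v>0$ derive a contradiction with the kink inequalities unless $\dot u_v=0$, and propagate along $\mathcal T$ to conclude $\dot u\equiv 0$. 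Steps (iii)--(v) are where the spherical-specific difficulty lives; everything else is bookkeeping already set up in Section~\ref{conpol}.
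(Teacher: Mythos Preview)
Your diagnosis of the difficulty is exactly right: the Hessian of $H$ is not definite in the spherical case, and one needs the condition $\kappa_v>0$ in a substantive way. You are also correct that the mechanism is a dimensional reduction in the spirit of \cite{BI, Izm2}. However, the paper carries out this reduction along a different axis than your distance-function/maximum-principle plan, and your steps (iii)--(v) have real gaps.

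\textbf{What the paper actually does.} The reduction is not to the distance function $\mu$ on the boundary cusp-surface, but to the \emph{horospherical links}: at each ideal vertex $v$ the canonical horosphere cuts out a Euclidean cone-polygon $Q_v$ with cone angle $\omega_v$, side lengths $\lambda_i$ fixed by $(g,h)$, and radii $\rho_{\vec e}$. One first proves (Lemma~\ref{signature}) that, for a single Euclidean cone-polygon with $\omega<2\pi$, the bilinear form $I(x,y)=\sum_i\rho_i x_i\dot\alpha_i(y)$ has signature $(+,-,\ldots,-)$; the positive direction is spanned by $x^0_i=1/\rho_i$, and by Gauss--Bonnet $\dot\omega(x)=0$ is exactly $I$-orthogonality to $x^0$ (Lemma~\ref{link}). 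Then one rewrites the Hessian of $H$ in two ways,
\[
\ddot H(x,y)=\sum_v x_v\,\dot\kappa_v(y)
\qquad\text{and}\qquad
\ddot H(x,y)=-\sum_v e^{2u_v}\sum_{\vec e\in\vec E_v(\mathcal T)}\rho_{\vec e}\,\dot\rho_{\vec e}(x)\,\dot\alpha_{\vec e}(y),
\]
the second identity coming from the elementary relation $\dot u_v+\dot u_w=e^{2u_v}\rho_{vw}\dot\rho_{vw}+e^{2u_w}\rho_{wv}\dot\rho_{wv}$ obtained by differentiating Lemma~\ref{perpend}. If $x\in\ker d\kappa$, the first expression gives $\ddot H(x,x)=0$, while the second expresses $\ddot H(x,x)$ as $-\sum_v e^{2u_v}I_v(\dot\rho(x),\dot\rho(x))$. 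Since $x\in\ker d\kappa$ forces $\dot\omega_v(x)=0$ for every $v$, Lemma~\ref{link} gives each summand $\ge 0$; equality then forces every $\dot\rho_{\vec e}(x)=0$, and the explicit formula $\dot\rho_{\vec e}(x)=(x_v-\cos(l_{\vec e})x_w)/(\exp(u_v)\sin l_{\vec e})$ with $|\cos l_{\vec e}|<1$ finishes. So the place where $\kappa_v>0$ enters is precisely the signature statement for the horospherical cone-polygon, not a maximum principle on $\dot{(\cosh\mu)}$.

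\textbf{Why your plan, as written, has gaps.} In your step (iii) you assert that convexity of $P$ produces ``concave-type kinks'' of $\phi=\dot{(\cosh\mu)}$ across edges; but convexity only says $\alpha_e\le\pi$, and the \emph{variation} of dihedral angles under $\dot u$ has no sign, so the kink inequalities for $\phi$ are not available without further work. In step (v) you plan to ``derive a contradiction at a cone point with $\kappa_v>0$'' and then ``propagate along $\mathcal T$''; neither of these is substantiated, and the paper's own discussion shows that for $\kappa\equiv 0$ the kernel is genuinely three-dimensional (M\"obius directions), so whatever argument you run must visibly break when some $\kappa_v=0$ --- your sketch does not isolate that breaking point. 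Finally, the asymptotic control at the cusps that you flag as needed is in fact already packaged for free in the horospherical-link formulation: the links are compact Euclidean polygons, so no asymptotic analysis is required. If you want to salvage a proof close to your outline, the cleanest route is to replace the $\mu$-variation picture by the $\rho$-variation picture on the horospherical links and prove the signature Lemma~\ref{signature}; that is the missing idea.
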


\begin{lm}
\label{proper}
The map $\kappa:U_C( d)\rightarrow K$ is proper.
\end{lm}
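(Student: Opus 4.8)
The plan is to show that $\kappa: U_C(d) \to K$ is proper by a standard compactness-and-degeneration analysis: take a sequence $u^{(n)} \in U_C(d)$ with $\kappa(u^{(n)}) \to \kappa^* \in K$, and show that $u^{(n)}$ subconverges to a point of $U_C(d)$. Since $K$ is the target and $\kappa^* \in K$ by hypothesis, it suffices to rule out all ways in which $u^{(n)}$ could escape from $U_C(d)$: (i) some coordinate $u^{(n)}_v \to +\infty$; (ii) some coordinate $u^{(n)}_v \to -\infty$; (iii) $u^{(n)}$ converges to a point on $\partial U(d)$ (the boundary of the full conformal-class parameter domain); or (iv) $u^{(n)}$ converges to a point $u^*$ in $U(d)$ where some $\kappa_v(u^*) = 0$, i.e.\ the limit lies on $\partial U_C(d)$ inside $U(d)$. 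Case (iv) is actually automatic: if $u^{(n)} \to u^* \in U(d)$ then by continuity $\kappa(u^*) = \kappa^* \in (0,2\pi)^V$, so $u^* \in U_C(d)$ and we are done; so the real content is ruling out (i), (ii), (iii).

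For the unbounded cases, I would work with the geometry of the ideal cone-polyhedron $P(d,u^{(n)})$. First note a normalization: the functional $H$ satisfies $\partial H/\partial u_v = \kappa_v$ by Lemma~\ref{firstder}, and on $U_C(d)$ all $\kappa_v > 0$, so $H$ is strictly increasing in each coordinate along $U_C(d)$; combined with condition~(\ref{e1}), $\sum_v \kappa_v < 4\pi$, this should bound the ``drift'' of $H$. More directly, I would use the explicit length formula from the proof of Lemma~\ref{connection}: for an edge $e$ of a face triangulation $\mathcal T_n$ of $P(d,u^{(n)})$ with endpoints $v,w$,
$$
\sin\!\left(\frac{l_e(d_n)}{2}\right) = \exp\!\left(\frac{a_e - u^{(n)}_v - u^{(n)}_w}{2}\right),
$$
where $a_e$ are the (fixed) decorated edge lengths of $g_d$ with $h_d$, ranging over the \emph{finitely many} triangulations that occur (Corollary~\ref{finc}). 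If $u^{(n)}_v + u^{(n)}_w \to -\infty$ for an edge present in infinitely many terms, then $l_e(d_n) \to \pi$, which pushes the spherical triangle toward degeneration (perimeter $\to 2\pi$ or an angle $\to \pi$); tracking the angles, the curvature $\kappa_v$ at such a vertex should be forced out of $(0,2\pi)$ or the triangle inequality on exterior angles (the mechanism behind~(\ref{e2})) should be violated in the limit, contradicting $\kappa^* \in K$. Symmetrically, if $u^{(n)}_v \to +\infty$ for some $v$, then every edge $e$ at $v$ has $l_e(d_n) \to 0$, so the cone angle $\omega_v(d_n)$ at $v$ tends to the Euclidean angle sum, i.e.\ $\kappa_v(d_n) \to 0$ from within $(0,2\pi)$ — wait, one must check the direction: a vanishing star makes the metric near $v$ look Euclidean, so $\kappa_v \to 0$, contradicting $\kappa^*_v > 0$. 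The subtle point is handling vertices whose coordinates stay bounded while others blow up; here I would pass to a subsequence so that $\mathcal T_n = \mathcal T$ is fixed, partition $V$ into the set $V_\infty$ of vertices with $u^{(n)}_v \to +\infty$, $V_{-\infty}$ with $u^{(n)}_v \to -\infty$, and $V_{\rm fin}$ bounded, and analyze the induced metric on the subsurface spanned by $V_{\rm fin}$ together with the collapsing/expanding behavior along edges between the blocks, using Theorem~\ref{LT}'s conditions (\ref{e1})--(\ref{e2}) as the obstruction.

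For case (iii), convergence to $\partial U(d)$: here I would argue that approaching $\partial U(d)$ forces some dihedral angle $\alpha_e$ of the convex ideal cone-polyhedron to hit $\pi$ \emph{and} a further flip to become impossible, or a semi-ideal tetrahedron to degenerate while the metric $d_n$ stays in the closure of $\mathfrak M_+(\mathcal T)$ but the convexity $\alpha_e \le \pi$ is about to fail irreparably — and in each such scenario either $l_e(d_n) \to \pi$ (handled as above) or the limiting spherical cone-metric still lies in $\mathfrak M_{+,T}(S,V)$ with positive curvatures, so its parameter is an interior point of $U(d)$, contradicting the choice of a boundary sequence. The main obstacle I anticipate is precisely the bookkeeping in case (i)/(iii) when the coordinates blow up at different rates on different vertex blocks: one needs a clean way to extract, from $\kappa^* \in K$, a contradiction with the degenerating geometry, and this is exactly where conditions (\ref{e1}) and (\ref{e2}) — rather than mere Gauss--Bonnet — must be used, mirroring how the analogous properness steps in~\cite{GLSW,GGLSW} and the smooth argument of Luo--Tian~\cite{LT} exploit the full set of linear inequalities cutting out $K$.
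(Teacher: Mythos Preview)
Your overall case split matches the paper's, and passing to a fixed Delaunay triangulation via Corollary~\ref{finc} is correct, but several of the mechanisms you propose are wrong. In your case (i), the claim that $\kappa_v\to 0$ when all edges at $v$ shrink is false: as the incident triangles become small, the spherical angles at $v$ tend to the \emph{Euclidean} angles of the limiting shape, so $\kappa_v$ tends to the Euclidean curvature $\delta_v$, which has no reason to vanish (cf.\ Lemma~\ref{contract2}). The actual obstruction in this regime is that $\area(S,d_n)\to 0$, whence $\sum_v\kappa_v\to 4\pi$ by Gauss--Bonnet, violating~(\ref{e1}). In your case (ii), the premise ``$u_v+u_w\to-\infty$'' is vacuous, since the very formula you quote forces $u_v+u_w\ge a_e$ throughout. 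When some $u_v\to-\infty$, every neighbour is in fact forced to $+\infty$; the paper then splits on whether the incident edge lengths tend to $\pi$, and in that subcase derives $\sum_{w\ne v}\bar\kappa_w\le\bar\kappa_v$ from an area computation, contradicting~(\ref{e2}). You are right that~(\ref{e2}) must enter here, but the route is via area and Gauss--Bonnet, not directly through the edge-length asymptotics.

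The most serious gap is your case (iii). When all $u^{(n)}_v$ stay finite, the metrics $d_n$ converge to some $\bar d$, but a triangle of $\mathcal T$ may degenerate into a spherical lune (one angle reaches $\pi$), so that $\bar d\notin\mathfrak M_{+,T}(S,V)$. Ruling this out is the real content of the bounded case and requires a separate comparison lemma (Lemma~\ref{weakangle} in the paper): there is no convex spherical cone-metric on a disk with perimeter $2\pi$ and at least three boundary kink-points. This is proved via Alexandrov's realization and rigidity theorems for convex polyhedra in $\S^3$ and then applied to the complement in $\bar d$ of the degenerating triangle (or adjacent pair of triangles, when an edge length reaches $\pi$). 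Your assertion that in this scenario ``the limiting spherical cone-metric still lies in $\mathfrak M_{+,T}(S,V)$'' is precisely the statement that needs to be proved, and nothing in your sketch supplies a substitute for this step.
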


\begin{lm}
\label{connect}
$U_C( d)$ is connected.
\end{lm}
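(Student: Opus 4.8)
The plan is to show that $U_C(d)$ is connected by a deformation-retraction type argument along the lines used for the analogous statements in \cite{GLSW, GGLSW}, combined with the convexity structure available here. The key observation is that $U_C(d)$ is cut out inside the connected manifold $U(d)$ (connected by definition) by the open conditions $\kappa_v(u)>0$ for all $v\in V$. So the task is to understand the geometry of the ``bad'' set where some curvature is nonpositive, and to push any two points of $U_C(d)$ around it. First I would record the monotonicity phenomenon that makes this possible: by Lemma~\ref{firstder} and Lemma~\ref{secder}, along the diagonal direction $u\mapsto u+t\mathds{1}$ (adding the same constant to all $u_v$) the curvatures change in a controlled way. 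In fact, adding a constant $t$ to every $u_v$ scales the decoration uniformly; geometrically this is pushing all horospheres in $P(d,u)$ inward/outward by the same amount, and one checks from the cosine law (Lemma~\ref{coslaw}, $\cos l_e = 1-2\exp(a_e-u_v-u_w)$) that all edge lengths $l_e$ vary monotonically, hence the cone angles $\omega_v$, hence each $\kappa_v$, is monotone in $t$. The precise direction of monotonicity (increasing vs. decreasing $\kappa_v$ as $t\to+\infty$) is what I would pin down first: as $t\to+\infty$ all horocycles shrink to the cusps, the spherical link metric degenerates so that every $l_e\to 0$, i.e. the metric collapses and all $\omega_v\to 0$, so $\kappa_v\to 2\pi$; as $t\to-\infty$ the links blow up, $l_e\to\pi$, and $\kappa_v\to 2\pi-(\text{something})$, driving curvatures down.

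Given this, the strategy is: for a point $u\in U_C(d)$, flow it in the $+\mathds{1}$ direction. I claim the ray $\{u+t\mathds{1}: t\ge 0\}$ stays in $U_C(d)$ once curvatures are positive, because each $\kappa_v(u+t\mathds{1})$ is monotone increasing toward $2\pi$ — so no curvature can dip back to $0$. (One must also check the ray stays in $U(d)$: the relevant ideal cone-polyhedra remain convex and non-degenerate under uniform horosphere shrinking, which follows because shrinking all horospheres preserves all the Delaunay/convexity inequalities — these are local inequalities among the tetrahedra that are monotone in the uniform parameter; here I would lean on Corollary~\ref{finc} to reduce to finitely many cells $U(d,\mathcal{T})$ and a compactness argument, or alternatively verify directly that convexity of $P(d,u)$ is preserved.) Then the whole of $U_C(d)$ deformation-retracts, along these rays, onto the ``far region'' $\{u : \kappa_v(u)>2\pi-\epsilon \text{ for all }v\}$ for suitable small $\epsilon$; and that far region I would argue is itself connected — it is the preimage under $\kappa$ of a neighborhood of the corner $(2\pi,\dots,2\pi)$, which can be analyzed because there $\Phi(d)$-side metrics collapse and the parametrization becomes essentially linear, or more robustly: on the far region $d\kappa$ is non-degenerate by Lemma~\ref{infrig} and $\kappa$ is proper by Lemma~\ref{proper} onto its image, so $\kappa$ restricted there is a covering onto a connected, simply connected open subset of $K$ near the corner, forcing each component to map homeomorphically; a dimension count plus the explicit one-parameter $\mathds{1}$-family then shows there is only one component.

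Alternatively, and perhaps more cleanly, I would bypass the ``far region'' analysis: define a retraction of $U_C(d)$ directly. Given $u, u' \in U_C(d)$, flow both far out along $\mathds{1}$ to points $u + T\mathds{1}$, $u' + T\mathds{1}$ with all curvatures $> 2\pi - \epsilon$; connect $u$ and $u'$ in the connected manifold $U(d)$ by an arbitrary path $\gamma$; by Corollary~\ref{finc} and local finiteness, $\gamma$ meets only finitely many cells, so $\gamma$ stays in a compact set $C \subset U(d)$; by continuity and compactness there is $T_0$ so that $\gamma + T_0\mathds{1}$ has all curvatures $> 2\pi - \epsilon > 0$, i.e. $\gamma + T_0\mathds{1} \subset U_C(d)$ — provided the translate of $C$ by $T_0\mathds{1}$ still lies in $U(d)$, which is the point to secure. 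Concatenating the $\mathds{1}$-rays from $u, u'$ with this shifted path gives a path in $U_C(d)$ joining $u$ to $u'$.

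\textbf{Main obstacle.} The crux is the claim that translation by $t\mathds{1}$ for $t\ge 0$ maps $U(d)$ (or at least a large enough part of it) into itself — i.e. that uniformly shrinking all horospheres preserves existence and convexity of the ideal cone-polyhedron. Monotonicity of the individual curvatures is a routine trigonometric computation, but controlling the global convexity/non-degeneracy across cell walls $U(d,\mathcal{T})$, and ruling out that the $\mathds{1}$-ray exits $U(d)$ through some degeneration, is the delicate part; I expect to handle it by the flip-algorithm argument from the proof of Lemma~\ref{delaun} together with the volume-monotonicity under flips, or by a direct estimate showing all the semi-ideal tetrahedra stay non-degenerate as $t$ increases. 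A secondary subtlety is that the geometric picture at $t\to+\infty$ (total collapse of the spherical link) needs to be made precise enough to justify $\kappa_v\to 2\pi$ uniformly, for which I would use the explicit formula $\sin(l_e/2)=\exp((a_e-u_v-u_w)/2)\to 0$ and continuity of the angle functions on the (finitely many) triangle shape spaces.
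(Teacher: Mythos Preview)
Your proposal has the right scaffolding --- the $\mathds 1$-ray, the monotonicity $\dot\kappa_v(\mathds 1)>0$, the idea of shifting a path --- and the ``main obstacle'' you flag (that $u+t\mathds 1$ stays in $U(d)$) is in fact not an obstacle: since $2\sin(l_e/2)=\exp((a_e-u_v-u_w)/2)$, adding $t\mathds 1$ multiplies all Euclidean subtending edge lengths by $e^{-t}$, which preserves both the triangle inequalities and the Delaunay condition (Lemma~\ref{phi1}), so $u+t\mathds 1\in U(d,\mathcal T)$ for all $t\ge 0$. The paper states this in one line.

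The genuine gap is your limit computation. As $t\to+\infty$ the spherical triangles shrink, but shrinking spherical triangles become \emph{Euclidean}, not degenerate: the angles converge to the angles of the subtending Euclidean triangles, so $\omega_v(u+t\mathds 1)\to 2\pi-\delta_v(u)$, where $\delta_v(u)$ is the discrete curvature of the Euclidean cone-metric $\Phi(u*d)$. Thus
\[
\lim_{t\to+\infty}\kappa_v(u+t\mathds 1)=\delta_v(u),
\]
not $2\pi$ (this is exactly the paper's Lemma~\ref{contract2}). Your claim that ``$\omega_v\to 0$'' is wrong, and with it the whole ``far region near $(2\pi,\dots,2\pi)$'' picture collapses: there is no such region in general.

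This breaks your alternative argument as well. If $\gamma$ is an \emph{arbitrary} path in $U(d)$ from $u$ to $u'$, then $\kappa_v(\gamma(s)+T_0\mathds 1)\approx\delta_v(\gamma(s))$ for large $T_0$, and there is no reason $\delta_v(\gamma(s))>0$ along $\gamma$. So shifting an arbitrary path does not land you in $U_C(d)$. What is missing is a mechanism to choose $\gamma$ so that the \emph{Euclidean} curvatures stay positive along it. The paper supplies exactly this: since $\kappa_v$ is increasing along $\mathds 1$-rays with limit $\delta_v$, for $u,u'\in U_C(d)$ one has $\delta(u),\delta(u')\in(0,2\pi)^V$; then Theorem~\ref{glsw} (the Euclidean discrete uniformization) produces a path $d_t$ in $\mathfrak C_0(\Phi(d))$ with $\delta(d_t)=t\delta(u')+(1-t)\delta(u)\in(0,2\pi)^V$, which lifts (after scaling so all circumradii are $<1$) to a path $u_t$ in $U(d)$. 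Now the shift $u_t+\lambda\mathds 1$ for large $\lambda$ has $\kappa_v$ close to $\delta_v(u_t)>0$, hence lies in $U_C(d)$, and the $\mathds 1$-rays connect the endpoints back to $u,u'$ by monotonicity. In short, your argument needs one extra ingredient you did not anticipate: an appeal to the already-proved Euclidean prescription theorem to control $\delta$ along the connecting path.
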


\begin{proof}[Proof of Theorem~\ref{main}]
By Lemmas~\ref{infrig} and~\ref{proper}, $\kappa$ is a proper local homeomorphism on $U_C(d)$. Thus it is a covering map. Since $K$ is simply connected and by Lemma~\ref{connect} we get that $U_C(d)$ is connected, $\kappa$ is a global homeomorphism from $U_C(d)$ onto $K$.
\end{proof}

\subsection{Infinitesimal rigidity}

\subsubsection{Euclidean cone-polygons}

The differential $d\kappa$ is given by Lemma~\ref{secder}. It is however unclear to us how to use this expression in order to establish its non-degeneracy (in sharp contrast with the hyperbolic case where it is easy to see that such matrix is diagonally dominant, which implies its non-degeneracy, see, e.g., Corollary~5.4 in~\cite{Pro1}).
We will show the non-degeneracy of $d\kappa$ by a ``dimensional reduction'' argument appearing in~\cite{BI, Izm2}. We will deduce another formula for $d\kappa$, which allows to extract some additional information from the behavior of the discrete curvature of the horospherical links in the respective ideal cone-polyhedra. To this purpose we need to introduce new objects.

\begin{dfn}
A \emph{Euclidean cone-polygon} $Q$ is a complete metric space homeomorphic to the 2-disk with a locally Euclidean metric in the interior except one point $v$, where it is locally isometric to a Euclidean cone of total angle $\omega$, and with piecewise geodesic boundary.
\end{dfn}

We denote the kink points of the boundary of $Q$ by $w_1, \ldots, w_n$ in the cyclic order. The cone-polygon $Q$ can be naturally triangulated by geodesics from $v$ to the kink-points. By $\rho_i$ we denote the length of the geodesic from $v$ to $w_i$, by $\alpha_i$ we denote the angle at $w_i$, by $\lambda_i$ we denote the length of the geodesic segment between $w_i$ and $w_{i+1}$ and by $\omega_i$ we denote the angle between the geodesics from $v$ to $w_i$ and $w_{i+1}$.
%, by $\theta_i:=\pi-\alpha_i$ we denote the exterior angle at $w_i$.

The cone-polygon $Q$ is uniquely determined by the lengths $\rho_i$ and $\lambda_i$. 
%We replace $\rho_i$ by new parameters $s_i:=\rho_i^2/2$. Fix $\lambda_i$ and consider the point $s \in \R^n$. For small deformations of $s$ the cone-polygon remains to exist.
Fix $\lambda_i$ and consider the point $\rho \in \R^n$. For small deformations of $\rho$ the cone-polygon remains to exist. Let $x \in T_\rho\R^n$ be a tangent vector. Every angle $\alpha_j$ is a smooth function around $\rho$. By $\dot \alpha_j(x)$ we denote its derivative in the direction $x$, which is a linear form in $x_i$. Similarly, $\dot\omega(x)$ is the directional derivative of $\omega$.

Consider the bilinear form $I$ on $T_\rho\R^n$ defined by
$$I(x, y)=\sum_i \rho_i x_i \dot\alpha_i(y)=\sum_{i,j}\rho_i\frac{\partial \alpha_i}{\partial\rho_j}x_iy_j.$$

%\begin{rmk}
%One can see that $I$ is the Hessian of the functional
%$$G(Q):=-2\area(Q) +\sum_i \rho_i^2\alpha_i.$$
%\end{rmk}

\begin{lm}
\label{signature}
Let $\omega < 2\pi$. Then the signature of $I$ is $(+,-,\ldots,-)$.
\end{lm}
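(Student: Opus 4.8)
The plan is to prove Lemma~\ref{signature} by an induction on the number $n$ of kink points, using a "bigon-attaching" move analogous to the induction step in the discussion of condition~(\ref{e2}) above. The base case $n=1$ (or $n=2$) amounts to a direct computation for a single semi-ideal triangle: one uses the cosine law for semi-ideal triangles from Lemma~\ref{coslaw} together with the perpendicular formulas of Lemma~\ref{perpend} to write the angle $\omega$ as a function of $\rho_1$ (and a fixed $\lambda_1$), and checks that the associated $1\times 1$ form $I$ is positive whenever $\omega<2\pi$. Actually it is cleaner to observe that $I$ is essentially the Hessian of a convexity-type functional: $I(x,y)=\sum_i \rho_i x_i\dot\alpha_i(y)$ can be recognized, after integrating, as the second variation of the Euclidean analogue of the discrete Hilbert--Einstein functional on cone-polygons (a "Volkov functional" for polygons), namely $\mathcal F(\rho)=-2\operatorname{area}(Q)+\sum_i\rho_i\alpha_i+\text{boundary terms}$; so $I$ is symmetric and its signature is a deformation invariant on the (connected) space of polygons with fixed $\lambda_i$ and $\omega<2\pi$.

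The inductive step is the heart of the argument. Given a cone-polygon $Q$ with $n$ kink points, I would split off the triangle $vw_nw_1$ along the diagonal $vw_1\ldots$ — more precisely, I would write $Q$ as the union of a cone-polygon $Q'$ with $n-1$ kink points (cone point $v$, kinks $w_1,\dots,w_{n-1}$, plus possibly $w_n$ replaced) glued to a single Euclidean triangle along an edge, and relate the bilinear form $I_Q$ of $Q$ to the form $I_{Q'}$ of $Q'$ plus a rank-one (or rank-two) correction coming from the attached triangle. The combinatorics must be set up so the total angle at $v$ only decreases, keeping us in the regime $\omega'<\omega<2\pi$ where induction applies, giving $I_{Q'}$ of signature $(+,-,\dots,-)$. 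Then a Cauchy-interlacing / rank-perturbation argument shows that adding one triangle changes the signature in a controlled way: the positive index stays $1$ and the negative index grows by exactly one, provided the degenerate directions are ruled out. Ruling out degeneracy — showing the form stays nondegenerate through the gluing, so that no extra zero eigenvalue appears — is where I expect the real work to be, and it should follow from a strict convexity/monotonicity property of the angle defect of a Euclidean triangle as a function of an edge length, i.e.\ the fact that $\partial\alpha_i/\partial\rho_i$ has a definite sign in the relevant range.

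The main obstacle, then, is twofold: first, choosing the right decomposition move so that the angle at $v$ genuinely decreases and the pieces stay non-degenerate convex Euclidean polygons (a bigon glued along a geodesic between two kinks, with the cone point in one of the two pieces, is the natural candidate, mirroring the parenthetical remark after Theorem~\ref{LT}); and second, carrying out the linear-algebra bookkeeping — expressing $I_Q$ in a basis adapted to the splitting, identifying the interface variable, and applying the interlacing lemma — carefully enough to see that exactly one negative direction is added and no null direction survives. Once the signature $(+,-,\dots,-)$ is established for all $n$, it will feed directly into the dimensional-reduction computation of $d\kappa$: the form $I$ computes the second variation of the curvature of the horospherical link of the marked point, and its near-negative-definiteness on the appropriate hyperplane (the one where the link's total curvature is fixed) is what ultimately yields the non-degeneracy of $d\kappa$ in Lemma~\ref{infrig}.
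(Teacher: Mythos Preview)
Your approach is genuinely different from the paper's, and as it stands it has real gaps.

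The paper does \emph{not} induct on $n$. Instead it (i) proves directly that $I$ is non-degenerate for any cone-polygon with $\omega<2\pi$, by cutting $Q$ along $vw_1$, developing on $\E^2$, and observing that a vector in the kernel of $I$ would induce an infinitesimal isometry of the developed boundary polygon, which is then shown to be trivial; (ii) uses the invariance of signature along a continuous non-degenerate family to deform to the fully symmetric polygon (all $\rho_i$ equal, all $\omega_i$ equal); and (iii) computes the eigenvalues of the resulting circulant matrix explicitly, reading off the signature $(+,-,\dots,-)$. You actually mention signature being a deformation invariant in passing, but then drop it in favor of induction; in the paper that observation \emph{is} the argument, once non-degeneracy is in hand.

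Concerning your proposal itself, there are three concrete problems. First, the base case is mis-situated: the cone-polygon $Q$ is purely Euclidean, so Lemmas~\ref{coslaw} and~\ref{perpend} (hyperbolic semi-ideal trigonometry) play no role here. Second, the proposed splitting is geometrically unclear: removing the triangle $vw_nw_1$ from the fan puts $v$ on the boundary, so the remaining piece is no longer a cone-polygon of the same type, and the restriction of $I$ to $\{x_n=0\}$ is not the form $I_{Q'}$ of any $(n-1)$-gon with interior cone point. Third, and most seriously, Cauchy interlacing does not by itself give what you need: knowing the $(n-1)\times(n-1)$ block has signature $(+,-,\dots,-)$ forces the $n\times n$ form to have at least $n-2$ negative and at least one positive eigenvalue, but leaves the remaining eigenvalue undetermined in sign. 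Ruling out a \emph{second} positive eigenvalue (not merely a zero one) is exactly the content of the lemma and cannot be deferred as ``where the real work is''; your monotonicity remark about $\partial\alpha_i/\partial\rho_i$ does not obviously supply it. The paper's developing/Killing-field argument handles non-degeneracy in one stroke for all $n$, after which the deformation-to-circulant trick finishes the signature count with no bookkeeping.
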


\begin{proof}
First we show the non-degeneracy. Suppose the converse, then there exists $x\in T_{\rho}\R^n$ such that for every other vector $y\in T_{\rho}\R^n$ we have $I(y, x)=0$. As the coefficients of $I$ are equal to $\rho_i\frac{\partial \alpha_i}{\partial \rho_j},$ we get $\dot \alpha_i(x)=0$ for each $i$.

Now cut $Q$ along the geodesic $vw_1$ and develop it on the Euclidean plane $\E^2$ as a polygon $Q'$. Slightly abusing the notation we denote the vertices of $Q'$ by $v, w'_1, w_2, \ldots, w_n, w''_1$. The infinitesimal deformation $x$ gives rise to an infinitesimal deformation of $Q'$. As $\dot \lambda_i(x)$ and $\dot\alpha_i(x)$ are zero, it is clear that the infinitesimal deformation of all vertices of $Q'$ except $v$ is induced by a global Killing field on $\E^2$. We can assume that it is zero. Then, as the infinitesimal change of the distance from $v$ to $w'_1$ is the same as of the distance from $v$ to $w''_1$, we see that the infinitesimal deformation of $v$ is orthogonal to the segment through $w'_1$ and $w''_1$. But every such non-zero deformation induces a non-trivial change on the sum of angles $\angle vw'_1w_2+\angle vw''_1w_n=\alpha_1$. Thus, it is zero and $x=0$. 

Now we deform $\rho_i$ and $\omega_i$ continuously so that $\rho_i$ remain positive and the sum of $\omega_i$ remains constant until all $\rho_i$ become equal and all $\omega_i$ become equal. As $I$ stays non-degenerate during this transformation, its signature does not change. It is easy to compute that in the end the matrix of $I$ becomes
$$
\frac{1}{\rho^2\sin\frac{\omega}{n} }
\begin{pmatrix}
-2\cos\frac{\omega}{n} & 1 & 0 & \ldots & 1\\
1 & -2\cos\frac{\omega}{n} & 1 & \ldots & 0\\
0 & 1 & -2\cos\frac{\omega}{n} & \ldots & 0\\
\vdots & \vdots & \vdots & \ddots & \vdots\\
1 & 0 & 0 & \ldots & -2\cos\frac{\omega}{n}
\end{pmatrix}.
$$ 
It has the spectrum $$\left\{\frac{2(\cos\frac{2\pi k}{n} -
\cos\frac{\omega}{n})}{\rho^2\sin\frac{\omega}{n}}, k = 0, \ldots, n-1\right\}.$$
This proves the claim on the signature.
\end{proof}

\begin{lm}
\label{link}
Assume that $\omega < 2\pi$ and all $\alpha_i<\pi$. If for $x \in T_\rho\R^n$ we have $\dot \omega(x)=0$, then $I(x, x)\leq 0$. Moreover, $I(x, x)= 0$ implies that $x=0$.
\end{lm}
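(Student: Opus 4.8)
The plan is to combine the signature computation of Lemma~\ref{signature} with the constraint $\dot\omega(x)=0$ via a dimensional-reduction / restriction argument. Since $\omega=\sum_i\omega_i=2\pi-\sum_i(\pi-\omega_i)$... more to the point, the relation between the $\omega_i$ and the data: when we fix all $\lambda_i$ and vary $\rho$, the map $\rho\mapsto\omega=\sum_i\omega_i$ has differential $\dot\omega$, and the condition $\dot\omega(x)=0$ cuts out a hyperplane $W=\ker\dot\omega\subset T_\rho\R^n$. By Lemma~\ref{signature}, $I$ has signature $(+,-,\dots,-)$ on the whole $n$-dimensional space, so its restriction to any hyperplane $W$ has signature either $(+,-,\dots,-)$ ($n-2$ minus signs, so $I|_W$ is indefinite and nondegenerate) or $(0,-,\dots,-)$ ($n-1$ minus signs, so $I|_W$ is negative semidefinite with a one-dimensional kernel) or $(-,\dots,-)$ (negative definite). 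To get the claimed conclusion $I(x,x)\le 0$ for all $x\in W$ with equality only at $x=0$, I must rule out the first case and, in the degenerate case, show the kernel is trivial.

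The key step is therefore to identify the hyperplane $W=\ker\dot\omega$ with something on which $I$ is visibly nonpositive. Here is where the hypothesis $\alpha_i<\pi$ enters. The natural candidate: I claim $W$ coincides with the image of the differential of the "edge-length deformation" map, or dually, that the positive direction of $I$ is transverse to $W$, i.e. $\dot\omega$ does not vanish on any $I$-positive vector. Concretely, by the Schläfli-type / Pogorelov-type identity for the Euclidean cone-polygon $Q$ (differentiating $\omega=\sum\omega_i$ while keeping the $\lambda_i$ fixed), one relates $\dot\omega(x)$ to $\sum_i(\text{something})\,x_i$; I expect the clean statement to be that $\dot\omega(x)=-I(x,\mathbf 1)$ or a similar pairing, where $\mathbf 1=(1,\dots,1)$ corresponds to scaling all radii (a deformation under which $\omega$ strictly decreases when $\omega<2\pi$, giving $I(\mathbf 1,\mathbf 1)<0$). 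Then $W=\{x:I(x,\mathbf 1)=0\}=\mathbf 1^{\perp_I}$, the $I$-orthogonal complement of a negative vector; by Sylvester, restricting a form of signature $(+,-,\dots,-)$ to the $I$-orthogonal complement of a negative vector yields a form of signature $(+,-,\dots,-)$ with one fewer minus — still indefinite. So this naive identification is not enough, and the role of $\alpha_i<\pi$ must be more subtle than just producing one negative vector.

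So the real argument must be: show directly that $\dot\omega(x)=0$ and $\alpha_i<\pi$ together force $x$ to lie in the negative cone. I would do this by a continuity/deformation argument paralleling the proof of Lemma~\ref{signature}: deform $(\rho,\omega)$ keeping all $\rho_i>0$, $\sum\omega_i$ fixed (hence $\dot\omega$ and the hyperplane $W$ vary continuously), and keeping all $\alpha_i<\pi$, until reaching the regular configuration. One checks the regular configuration directly: there $\dot\omega$ and the matrix of $I$ are explicit, and $I|_{\ker\dot\omega}$ is negative definite — this is the eigenvalue computation $2(\cos\frac{2\pi k}{n}-\cos\frac\omega n)/(\rho^2\sin\frac\omega n)$ restricted to the relevant subspace, using $\omega<2\pi$ so that $\cos\frac\omega n>\cos\frac{2\pi}{n}$ kills the top eigenvalue once we remove the direction responsible for it. The content I must supply is that $I|_W$ stays \emph{nondegenerate} throughout this deformation: if it degenerated, the kernel vector $x\in W$ would give an infinitesimal flex of $Q$ with $\dot\lambda_i(x)=\dot\alpha_i(x)=0$ (as in Lemma~\ref{signature}'s argument) \emph{and} $\dot\omega(x)=0$, i.e. a genuine nontrivial isometric infinitesimal deformation of the cone-polygon fixing the boundary metric and the cone angle — and the rigidity argument from Lemma~\ref{signature} (developing $Q$ in $\E^2$, using $\alpha_1<\pi$ to conclude the $v$-deformation is trivial) rules this out. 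The condition $\alpha_i<\pi$ is exactly what makes that developing-map rigidity step work, and that is the main obstacle: making the infinitesimal-rigidity step clean when $W$ is nontrivially cut out, and confirming the signature in the limiting regular polygon lands on negative (semi)definite rather than indefinite. Once nondegeneracy on $W$ is known to persist and the endpoint is negative definite, the signature is constant, giving $I(x,x)<0$ for all $0\ne x\in W$, which is slightly stronger than, and implies, the statement.
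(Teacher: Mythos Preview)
Your first instinct was right, and you abandoned it for the wrong reason. The paper's proof is precisely the argument you sketched and then dismissed: find a vector $v$ with $\dot\omega(\cdot)=-I(v,\cdot)$, so that $W=\ker\dot\omega=v^{\perp_I}$, and then check the sign of $I(v,v)$. The two places where your attempt went off the rails are the choice of $v$ and the sign.

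The correct dual vector is not $\mathbf 1$ but $x^0$ with $x^0_i=1/\rho_i$. This comes straight from the definition $I(x,y)=\sum_i\rho_i x_i\,\dot\alpha_i(y)$ together with Gauss--Bonnet, which differentiates to $\dot\omega=-\sum_i\dot\alpha_i$: plugging in $x=x^0$ gives $I(x^0,y)=\sum_i\dot\alpha_i(y)=-\dot\omega(y)$. Then $I(x^0,x^0)=-\dot\omega(x^0)$, and a direct computation in each constituent triangle gives $-\dot\omega(x^0)=\sum_i(\cot\alpha_i^++\cot\alpha_i^-)/\rho_i^2$, which is \emph{positive} because $\alpha_i^++\alpha_i^-=\alpha_i<\pi$. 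So $x^0$ is a \emph{positive} vector for $I$, not a negative one; with signature $(+,-,\dots,-)$ this means $x^0$ spans the whole positive cone, and its $I$-orthogonal complement $W$ is negative definite. That is the entire proof. (In the regular case $x^0$ is proportional to $\mathbf 1$, which is why your endpoint check came out consistent.)

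Your fallback deformation argument has a genuine gap even on its own terms. If $x$ lies in the kernel of $I|_W$, you only get $I(x,y)=0$ for $y\in W$, i.e.\ $I(x,\cdot)$ is proportional to $\dot\omega$; this does \emph{not} yield $\dot\alpha_i(x)=0$ for all $i$, so the developing-map rigidity from Lemma~\ref{signature} does not apply as stated. You would also need to justify that the deformation to the regular polygon can be done while keeping all $\alpha_i<\pi$, which is not addressed.
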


\begin{proof}
Note that by the Gauss--Bonnet theorem we have
\begin{equation}
\label{t}
2\pi-\omega+\sum_i (\pi-\alpha_i)=2\pi.
\end{equation}

Consider the vector $x^0 \in T_\rho \R^n$ defined by $x^0_i:=1/\rho_i$. From (\ref{t}) we get
$$I(x^0,x^0)=\sum_i \dot\alpha_i(x^0)=-\dot\omega(x^0)=-\sum_i \dot \omega_i(x^0)=\sum_i \frac{\cot\alpha_i^++\cot\alpha_i^-}{\rho_i^2},$$
where $\alpha_i^+:=\angle vw_iw_{i+1}$ and $\alpha^-_i:=\angle vw_iw_{i-1}.$ As $\alpha_i^++\alpha_i^-=\alpha_i<\pi$, we see that $I(x^0,x^0)>0$. By Lemma~\ref{signature}, the positive subspace of the quadratic form associated to $I$ is 1-dimensional, so $x^0$ spans it. 

Now from $\dot\omega(x)=0$ and (\ref{t}) we have
$$0=\dot\omega(x)=-\sum_i\dot\alpha_i(x)=\sum_{i,j}\frac{\partial \alpha_i}{\partial \rho_j}x_j=I(x^0, x).$$
This means that $x$ either belongs to the negative subspace of the associated quadratic form to $I$ or is zero. This finishes the proof.
\end{proof}

\subsubsection{Infinitesimal rigidity of cone-polyhedra}

Now we return to the setting of our problem. Similarly to the previous subsection, if $x \in T_u \R^V$, then by $\dot \kappa_v(x)$, $\dot \rho_{\vec e}(x)$ or $\dot \alpha_{\vec e}(x)$ we denote the directional derivatives of various geometric quantities of $P(d,u)$, which are linear forms. By $\ddot H$ we denote the Hessian of $H$ considered as a bilinear form on $T_u\R^V$.

\begin{lm}
We have two expressions
\begin{equation}
\label{v1}
\ddot H(x, y)= \sum_{v \in V} x_v \dot \kappa_v(y),
\end{equation}
\begin{equation}
\label{v2}
\ddot H(x, y)= -\sum_{v \in V} e^{2u_v} \sum_{\vec e \in \vec E_v(\mathcal T)} \rho_{\vec e}\dot \rho_{\vec e}(x)\dot \alpha_{\vec e}(y).
\end{equation}
\end{lm}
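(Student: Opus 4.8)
The first identity \eqref{v1} is immediate: it is just a restatement of Lemma~\ref{firstder}, which says $\partial H/\partial u_v = \kappa_v$, so the Hessian $\ddot H$ has entries $\partial^2 H/\partial u_v \partial u_w = \partial \kappa_v/\partial u_w$, and contracting against $x,y\in T_u\R^V$ gives $\ddot H(x,y) = \sum_v x_v \dot\kappa_v(y)$. The real content is \eqref{v2}, and the plan is to prove it by a termwise rewriting of the Hessian formula from Lemma~\ref{secder} using the trigonometry of semi-ideal tetrahedra from Section~\ref{laws}. Concretely, I would start from the explicit expressions \eqref{dervw} and \eqref{dervv} for $\partial\kappa_v/\partial u_w$, multiply by $x_v y_w$ and sum over $v,w$, and reorganize the double sum as a sum over oriented edges $\vec e \in \vec E_v(\mathcal T)$ (handling loops via $\vec E_{vv}$). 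The goal is to recognize the resulting per-edge contribution as $e^{2u_v}\rho_{\vec e}\,\dot\rho_{\vec e}(x)\,\dot\alpha_{\vec e}(y)$ up to sign.

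The key computational inputs are the derivatives already computed inside the proof of Lemma~\ref{secder}: for a semi-ideal triangle $BA_1A_2$ one has $\partial\rho_{12}/\partial u_1 = -\cot(l_{12})/\exp(u_1)$ and $\partial\rho_{12}/\partial u_2 = 1/(\exp(u_1)\sin(l_{12}))$, so that along a tangent vector $x$,
\begin{equation*}
\dot\rho_{\vec e}(x) = \frac{1}{\exp(u_v)}\left(-\cot(l_{\vec e})\,x_v + \frac{x_w}{\sin(l_{\vec e})}\right),
\end{equation*}
where $\vec e$ runs from $v$ to $w$. Meanwhile $\dot\alpha_{\vec e}(y)$ is governed by how the dihedral angle $\alpha_{\vec e}$ depends on the link geometry; the combination $\cot(\alpha^+_{\vec e}) + \cot(\alpha^-_{\vec e})$ appearing in \eqref{dervw}--\eqref{dervv} is exactly the kind of expression that shows up when differentiating $\alpha_{\vec e}$, which is why $\rho_{\vec e}\,\dot\alpha_{\vec e}(y)$ should reproduce a factor proportional to $(\cot\alpha^+_{\vec e}+\cot\alpha^-_{\vec e})$ times a linear form in $y_v, y_w$. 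I would compute $\dot\alpha_{\vec e}(y)$ explicitly (or reuse the relevant line of the Lemma~\ref{secder} proof), then verify that
\begin{equation*}
-e^{2u_v}\,\rho_{\vec e}\,\dot\rho_{\vec e}(x)\,\dot\alpha_{\vec e}(y)
\end{equation*}
expands into precisely the terms that \eqref{dervw} and \eqref{dervv} assign to the edge $\vec e$ when one contracts $\sum_{v,w}(\partial\kappa_v/\partial u_w)x_v y_w$. Summing over all $\vec e\in\vec E_v(\mathcal T)$ and all $v$ then yields \eqref{v2}. One should also check that the apparent asymmetry of \eqref{v2} in $x$ and $y$ is illusory, i.e.\ that the right-hand side is symmetric — this follows because it equals $\ddot H(x,y)$, which is symmetric by \eqref{v1}, but it is reassuring to see it drop out of the edge-by-edge bookkeeping (the $\cot l$ and $1/\sin l$ terms pair up across the two orientations of each geometric edge).

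The main obstacle I anticipate is purely organizational rather than conceptual: keeping careful track of orientations and of the loop edges. Each unoriented edge of $\mathcal T$ contributes two oriented edges, one in $\vec E_v$ and one in $\vec E_w$, and the diagonal terms $\partial\kappa_v/\partial u_v$ in \eqref{dervv} receive contributions both from the $\cos(l_{\vec e})$-weighted sum over $\vec E_v(\mathcal T)$ and from the unweighted sum over the loops $\vec E_{vv}(\mathcal T)$; matching these against $\dot\rho_{\vec e}(x)$, whose $x_v$-coefficient carries the $\cot(l_{\vec e}) = \cos(l_{\vec e})/\sin(l_{\vec e})$ and whose $x_w$-coefficient carries $1/\sin(l_{\vec e})$, is where sign errors are easy to make. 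The cleanest route is to fix an edge $\vec e$ from $v$ to $w$ (with $v\ne w$ or $v=w$ allowed) and check the coefficient of $x_a y_b$ on both sides for each of the finitely many relevant pairs $(a,b)\in\{(v,v),(v,w),(w,v),(w,w)\}$; since the right-hand side of \eqref{v2} is manifestly a sum of rank-one quadratic-form-like pieces in the $\dot\rho,\dot\alpha$ variables, this reduces the whole lemma to a short, bounded verification per edge type.
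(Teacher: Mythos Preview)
Your treatment of \eqref{v1} matches the paper's. For \eqref{v2}, however, your route diverges from the paper's and your execution sketch contains a real oversight.

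The oversight is in your claim that $\dot\alpha_{\vec e}(y)$ is a linear form in $y_v,y_w$ alone. The dihedral angle $\alpha_{\vec e}$, viewed in the horospherical link at $v$, depends on $\rho_{\vec e}$ \emph{and} on the two neighbouring $\rho_{\vec e'},\rho_{\vec e''}$; each of those in turn depends on $u$ at the far endpoints $w',w''$ of $\vec e',\vec e''$. Hence $\dot\alpha_{\vec e}(y)$ is a linear form in $y_v,y_w,y_{w'},y_{w''}$, and the single-edge term $-e^{2u_v}\rho_{\vec e}\dot\rho_{\vec e}(x)\dot\alpha_{\vec e}(y)$ produces $x_a y_b$-coefficients with $b\in\{w',w''\}$ that have no counterpart in the per-edge contribution to \eqref{dervw}--\eqref{dervv}. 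So your proposed ``short, bounded verification per edge'' over $(a,b)\in\{v,w\}^2$ cannot close; the extra cross-terms only cancel after regrouping (e.g.\ per tetrahedron), which makes the bookkeeping substantially heavier than you anticipate. The approach can be pushed through, but not as written.

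The paper's argument avoids all of this. It never computes $\dot\alpha_{\vec e}$ and never invokes Lemma~\ref{secder}. Instead it uses two short geometric facts. First, Gauss--Bonnet on the Euclidean link at $v$ gives $\dot\kappa_v(y)=\sum_{\vec e\in\vec E_v}\dot\alpha_{\vec e}(y)$, so \eqref{v1} becomes $\ddot H(x,y)=\sum_{e=\{v,w\}}\dot\alpha_e(y)(x_v+x_w)$ after pairing the two orientations of each edge. Second, from the perpendicular formulas of Lemma~\ref{perpend} one has $\rho_{vw}^2=e^{-2b_{vw}}-e^{-2u_v}$ and $\rho_{wv}^2=e^{-2b_{wv}}-e^{-2u_w}$; differentiating these together with the constraint $b_{vw}+b_{wv}=a_e$ (which is fixed by $(g,h)$) yields the identity
\[
x_v+x_w \;=\; e^{2u_v}\rho_{vw}\dot\rho_{vw}(x)+e^{2u_w}\rho_{wv}\dot\rho_{wv}(x).
\]
Substituting this into the edge sum gives \eqref{v2} immediately. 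What the paper's argument buys is that $\dot\alpha_{\vec e}(y)$ is carried along as an opaque symbol throughout---precisely what is needed downstream in the proof of Lemma~\ref{infrig}, where \eqref{v2} is read as $-\sum_v e^{2u_v}I_v(\dot\rho(x),\dot\rho(x))$ for the link bilinear form $I_v$ of Section~3.2.1.
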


\begin{proof}
The formula (\ref{v1}) is just Lemma~\ref{firstder} in a rewritten form. For (\ref{v2}) consider an oriented edge $\vec e$ emanating from $v$ and ending at $w$.

The horospherical link of $P(d, u)$ at a vertex $v$ is a Euclidean cone-polygon of total angle $\omega_v$ with angles $\alpha_{\vec e}$ for $\vec e \in \vec E_v(\mathcal T)$. Thus, we have
$$\dot \kappa_v(y)=-\dot\omega_v(y)=\sum_{\vec e \in \vec E_v(\mathcal T)} \dot\alpha_{\vec e}(y)$$
and we can rewrite (\ref{v1}) as
\begin{equation}
\label{v3}
\ddot H(x, y)=\sum_{e=\{vw\}\in E(\mathcal T)}\dot\alpha_{e}(y)(x_v + x_w).
\end{equation}

Consider the decorated semi-ideal triangle containing in $P(d, u)$ an edge $e$ with vertices $v$ and $w$. By $b_{vw}$, $b_{wv}$ we denote the lengths of the parts of $e$ in the triangle obtained by drawing a perpendicular from the marked point and by $\rho_{vw}$, $\rho_{wv}$ denote the respective parts of the canonical horocircles. Due to Lemma~\ref{perpend}, we have
$$\rho_{vw}^2=\exp(-2 b_{vw})-\exp(-2u_v),~~~~~\rho_{wv}^2=\exp(-2 b_{wv})-\exp(-2u_w).$$
Also $b_{vw}+b_{wv}=a_{e}$. Differentiating this we get 
\begin{equation}
\label{temp}
\exp(2b_{vw}-2u_v)\dot u_v+\exp(2b_{wv}-2u_w)\dot u_w-\exp(2b_{vw})\rho_{vw}\dot\rho_{vw}-\exp(2b_{wv})\rho_{wv}\dot\rho_{wv}=0.
\end{equation}

From Lemma~\ref{perpend} we also see that
$$\exp(2b_{vw}-2u_v)=\exp(2b_{wv}-2u_w).$$
So we can rewrite (\ref{temp}) as
$$
\dot u_v+\dot u_w-\exp(2u_v)\rho_{vw}\dot\rho_{vw}-\exp(2u_w)\rho_{wv}\dot\rho_{wv}=0.
$$ 
Considering the derivatives in the direction $x$ and substituting this to (\ref{v3}) we get exactly (\ref{v2}).
\end{proof}

Now we are ready to establish Lemma~\ref{infrig}.

\begin{proof}[Proof of Lemma~\ref{infrig}.]
Due to Lemma~\ref{firstder}, the Jacobian matrix $D\kappa=\left(\frac{\partial\kappa_v}{\partial u_w}\right)$ is the Hessian matrix $\ddot H$. We need to show its non-degeneracy.

Suppose the converse that for some $x$ and every $y$ we have $\ddot H(y, x)=0$. Due to (\ref{v1}), we obtain that $\dot\kappa_v(x)=0$ for all $v$. In particular, 
$$\ddot H(x, x)= \sum_{v \in V} x_v \dot \kappa_v(x)=0.$$
On the other hand, from Lemma~\ref{link} we get
$$\ddot H(x, x)= -\sum_{v \in V} e^{2u_v} \sum_{\vec e \in \vec E_v(\mathcal T)} \rho_{\vec e}\dot \rho_{\vec e}(x)\dot \alpha_{\vec e}(x) \geq 0$$
and its equality to zero means that all $\dot \rho_{\vec e}(x)$ are zero. One computes from the cosine laws of Lemma~\ref{coslaw} and the sine laws of Lemma~\ref{sinlaw} (see also the computations in the proof of Lemma~\ref{secder})
$$\dot \rho_{\vec e}(x)=\frac{x_v-\cos (l_{\vec e}) x_w}{\exp (u_v)\sin(l_{\vec e})}.$$
Clearly, from every point $v \in V$ there is at least one edge in a Delaunay triangulation $\mathcal T$. Also $\cos(l_{\vec e})<1$. Thus, all $\dot\rho_{\vec e}(x)=0$ implies that $x=0$.
\end{proof}

\subsection{Properness}

\subsubsection{Comparison lemma}

We will make use of the following lemma:

\begin{lm}
\label{weakangle}
There is no convex spherical cone-metric on the 2-disk $D$ with perimeter $2\pi$ and at least three kink-points at the boundary.
\end{lm}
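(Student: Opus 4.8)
The plan is to argue by contradiction using the polar duality for convex spherical cone-metrics together with the Gauss--Bonnet theorem, much in the spirit of the remark following Theorem~\ref{LT} about exterior angles of a spherical triangle obeying the triangle inequality. Suppose such a cone-metric $d$ exists on $D$ with boundary perimeter $2\pi$ and kink-points $w_1,\ldots,w_n$ at the boundary, $n\geq 3$, with exterior angles $\beta_1,\ldots,\beta_n$. Since $d$ is convex and has geodesic boundary away from the $w_i$ (with no interior cone-points, or at worst cone-points of positive curvature, which only makes the situation more rigid), I would first dispose of interior cone-points: if there is an interior cone-point it has positive curvature, and by cutting along a geodesic from it to the boundary and reducing we may assume $D$ is a convex spherical polygon in the usual sense. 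So reduce to the case where $(D,d)$ is a geodesic convex polygon in $\mathbb{S}^2$ with $n\geq 3$ sides and total side-length $2\pi$.

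The key step is then to invoke the polar dual. A convex spherical polygon $D\subset\mathbb{S}^2$ (contained in an open hemisphere, as convexity of the triangles forces) has a polar dual $D^*$, which is again a convex spherical polygon; the side-lengths of $D^*$ equal the exterior angles $\beta_i$ of $D$, and the exterior angles of $D^*$ equal the side-lengths of $D$. Applying Gauss--Bonnet to $D^*$: its area is $\area(D^*) = \sum_i(\pi - \text{angle}_i(D^*)) - (n-2)\pi = \sum_i \beta_i^* - \pi$ where I am using that the interior angles of $D^*$ are $\pi$ minus the side-lengths of $D$; since the side-lengths of $D$ sum to $2\pi$, one gets $\area(D^*) = \sum_i\pi - 2\pi - (n-2)\pi = 0$. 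An area-zero convex spherical polygon is degenerate — it collapses to a point or a geodesic arc — which forces $D$ itself to be degenerate (a single point, covered angle, or a bigon), contradicting $n\geq 3$ and the convexity/nondegeneracy of the triangulation. Alternatively, and perhaps more cleanly for the writeup, I would run Gauss--Bonnet directly on $D$: $\area(D) = \sum_i(\pi-\text{int.\ angle}_i) - (n-2)\pi + (\text{interior curvature}) $; the first sum is $\sum_i\beta_i$ wait — no, one must account for the geodesic boundary contributing nothing, so $\area(D) + \sum_i \beta_i = 2\pi$ (this is Gauss--Bonnet for a disk with piecewise-geodesic boundary and no interior curvature). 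Then relate $\sum_i\beta_i$ to the perimeter: the perimeter being $2\pi$ together with convexity should force $\sum\beta_i \geq 2\pi$ via a length-comparison with the polar, giving $\area(D)\leq 0$, hence the contradiction.

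The main obstacle I expect is making the length--exterior-angle comparison rigorous and handling the degenerate boundary cases of the dual polygon, in particular ensuring everything stays inside an open hemisphere so that polar duality behaves well and that ``area $0$'' genuinely forces degeneracy rather than some exotic self-overlapping configuration. A safe route that avoids duality altogether: triangulate $D$ from an interior point, write the total area as a sum of triangle areas and apply spherical Gauss--Bonnet to each, then use that in each convex spherical triangle the exterior angles satisfy the triangle inequality and sum appropriately; summing over the triangulation and using $\sum\lambda_i=2\pi$ (perimeter) should yield $\area(D)\le 0$ with equality only in the degenerate case, contradicting the existence of a genuine convex cone-metric with $n\geq 3$ kink-points. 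I would present whichever of these is shortest once the constants are checked; the polar-dual argument is the most conceptual but the direct Gauss--Bonnet bookkeeping is the most robust.
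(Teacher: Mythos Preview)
Your reduction to the case with no interior cone-points is the genuine gap. The lemma explicitly allows interior cone-points of positive curvature (this is how it is used in the properness argument: the complement $\overline{S\setminus T}$ carries all the remaining cone-points of $V$), and your proposed cut-and-reduce step does not work. Cutting along a geodesic from an interior cone-point $p$ to the boundary increases the perimeter by twice the length of the cut, so the hypothesis $\mathrm{perimeter}=2\pi$ is destroyed; moreover the new boundary vertex at $p$ has interior angle $\omega_p$, which may well exceed $\pi$, so convexity of the boundary is also lost. None of your three sketched endgames (polar duality, direct Gauss--Bonnet on $D$, triangulation from an interior point) gets off the ground without this reduction, and the vague remark that positive interior curvature ``only makes the situation more rigid'' is not a substitute for an argument.

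The paper's proof handles exactly this difficulty, but with heavy machinery rather than elementary surgery: it doubles $D$ along $\partial D$ to get a convex spherical cone-sphere, invokes Alexandrov's realization theorem to embed it as a convex body in $\S^3$, and then uses Alexandrov's rigidity theorem to show that the obvious reflection of the double extends to a reflection of $\S^3$ in a totally geodesic $\S^2$. The fixed curve $\tau=\partial D$ then bounds an honest convex polygon in that $\S^2$ with perimeter $2\pi$ and $\geq 3$ vertices. Only at this final embedded stage does one run the elementary argument (the lune containment, which is equivalent to your polar-dual observation that $\area(D^*)=0$). So your endgame is correct in spirit, but the whole point of the lemma is getting there, and that requires either Alexandrov's theorems or some comparable global input that your proposal does not supply.
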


Here by a convex spherical cone-metric on the 2-disk we mean that the conditions of Definition~\ref{conedfn} hold in the interior, the boundary is piecewise geodesic and the angle of each kink point is less than $\pi$.

For the proof of Lemma~\ref{weakangle} we need two classical facts.

\begin{thm}[Alexandrov's realization theorem]
\label{alex1}
Let $d$ be a convex spherical cone-metric on the 2-sphere $S$. Then there exists a convex polyhedron in the standard 3-sphere $\S^3$ with boundary isometric to $(S, d)$. 
\end{thm}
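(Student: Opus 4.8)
The plan is to prove this by the continuity (deformation) method of Alexandrov, which is structurally the same scheme used for Theorem~\ref{main} itself. Fix the number of marked points $n \geq 3$. On the one side, let $\mathcal{M}_n$ denote the space of convex spherical cone-metrics on $S$ with $n$ cone-points; encoding such a metric by the edge lengths of a geodesic triangulation with $n$ vertices and invoking Euler's formula (which gives $3n-6$ edges), one sees that $\mathcal{M}_n$ is a manifold of dimension $3n-6$, the triangle inequalities and perimeter bounds cutting out an open set in $\R^{3n-6}$. On the other side, let $\mathcal{P}_n$ denote the space of convex polytopes in $\S^3$ with $n$ vertices, taken modulo the isometry group of $\S^3$; since each vertex contributes three coordinates and $\dim O(4)=6$, this space also has dimension $3n-6$. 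The boundary of a convex polytope in $\S^3$ carries a convex spherical cone-metric, so there is a natural continuous map $\Phi:\mathcal{P}_n\to\mathcal{M}_n$ sending a polytope to its intrinsic boundary metric. The theorem asserts precisely that $\Phi$ is surjective.

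Surjectivity follows once we establish four properties, in parallel with the outline of Theorem~\ref{main}. First, \emph{global rigidity}: $\Phi$ is injective, i.e.\ two convex polytopes in $\S^3$ with isometric boundaries are congruent. Second, \emph{infinitesimal rigidity}: the differential $d\Phi$ is nondegenerate at every point, so that by equality of dimensions and the inverse function theorem $\Phi$ is a local homeomorphism, in particular an open map. Third, \emph{properness}: $\Phi$ is proper, whence its image is closed, since a proper map into a locally compact Hausdorff space is closed. Fourth, \emph{connectedness of the target}: $\mathcal{M}_n$ is connected, which can be seen by deforming any convex cone-metric, through convex cone-metrics, to a fixed standard one (for instance a regular bipyramid-type metric). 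Granting these, the image of $\Phi$ is nonempty, open, and closed in the connected space $\mathcal{M}_n$, hence all of it, so every convex spherical cone-metric is realized.

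The rigidity statements, which form the geometric heart of the argument, are proven by the Cauchy--Alexandrov sign-change technique. For global rigidity one compares two putatively non-congruent realizations, labels each edge of the common combinatorial sphere by the sign of the change in the corresponding dihedral angle, and derives a contradiction from the combinatorial Cauchy lemma (the cyclic sequence of signs around any labeled vertex has at least four sign changes, which conflicts with the Euler-formula count of sign changes over all faces), after verifying the \emph{spherical arm lemma} that controls how distances respond to opening angles; here the hypothesis that each triangle is convex and contained in an open hemisphere is exactly what makes the arm lemma valid. Infinitesimal rigidity is the linearization of the same scheme: an infinitesimal isometric deformation of the boundary induces a labeling of the $1$-skeleton by the signs of the first-order angle variations, and the same combinatorial obstruction forces the deformation to be trivial, i.e.\ induced by an ambient Killing field of $\S^3$. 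I expect this infinitesimal rigidity to be the main obstacle: unlike the hyperbolic and Euclidean settings, the spherical trigonometric identities governing the second-order behavior do not yield a diagonally dominant or sign-definite quadratic form, so the sign bookkeeping must be carried out with care, and one must simultaneously rule out degenerate ``flat'' configurations.

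The remaining delicate point is properness, together with the handling of the boundary strata where the combinatorial type jumps or a cone angle reaches $2\pi$. For properness one takes a sequence of polytopes whose boundary metrics converge in $\mathcal{M}_n$ and extracts a convergent subsequence: diameter and injectivity-radius estimates, together with cone angles bounded away from $2\pi$ on the limit, prevent the polytopes from collapsing to a lower-dimensional object or degenerating, so a subsequence converges to a convex polytope realizing the limit metric. The strata where some $\omega_v=2\pi$ (so the point ceases to be a genuine cone-point) or where vertices merge are treated by passing between the cell decompositions induced by the various geodesic triangulations, exactly as the cells $\mathfrak M_+(\mathcal T)$ are used to endow $\mathfrak M_{+,T}(S,V)$ with a smooth structure; an induction on $n$ across these strata then shows that $\Phi$ is a well-defined proper local homeomorphism over the whole of $\mathcal{M}_n$ and completes the continuity argument.
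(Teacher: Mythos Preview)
The paper does not give its own proof of Theorem~\ref{alex1}: it is quoted as a classical result and the reader is referred to Alexandrov's monograph (\cite[Sections~4.3 and~5.3]{Ale}). Your outline is essentially the continuity method used there, so you are reproducing the intended argument rather than departing from it.

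Two remarks on the sketch itself. First, in Alexandrov's original scheme infinitesimal rigidity is not a separate ingredient: once global rigidity (Cauchy's theorem, i.e.\ Theorem~\ref{alex2}) gives injectivity of $\Phi$ and the two spaces have equal dimension, Brouwer's invariance of domain already makes $\Phi$ open. Your list of four properties therefore contains a redundancy, and in fact Alexandrov avoids differentiating $\Phi$ altogether, which spares him from putting a smooth structure on $\mathcal P_n$ across changes of combinatorial type. Second, your expectation that the spherical case poses special difficulties for rigidity, and the comment about quadratic forms failing to be sign-definite, imports the obstacles of the discrete-conformality problem of the present paper into a quite different setting. Cauchy's sign-counting argument is combinatorial plus a local arm lemma and works uniformly in all three constant-curvature geometries; no Hessian enters, and the spherical realization theorem is no harder than the Euclidean one by this route. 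The genuinely delicate parts of Alexandrov's proof lie elsewhere: handling the strata of $\mathcal P_n$ where the face structure changes or the polytope degenerates, and the properness/compactness step.
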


The proof of this theorem in the Euclidean case is given in~\cite[Section 4.3]{Ale}. The discussion of its adaptation to the spherical case is in~\cite[Section 5.3]{Ale}.

\begin{thm}[Alexandrov's rigidity theorem]
\label{alex2}
Let $\Omega_1$, $\Omega_2$ be two convex polyhedra in $\S^3$ and $f: \partial \Omega_1 \rightarrow \partial \Omega_2$ be an isometry. Then there exists an isometry $F: \S^3 \rightarrow \S^3$ inducing $f$.
\end{thm}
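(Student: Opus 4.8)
The plan is to prove this by the classical Cauchy--Alexandrov overlay argument, transplanted to spherical geometry, reducing global rigidity to a sign-counting contradiction. The first and decisive observation is that although $f$ is only an \emph{intrinsic} isometry of the boundary metrics, it must carry the set of vertices of $\Omega_1$ bijectively onto the set of vertices of $\Omega_2$: the vertices are exactly the cone-points of the induced convex spherical cone-metrics on $\partial\Omega_1$ and $\partial\Omega_2$, i.e. the points where the metric fails to be locally isometric to $\S^2$, and this is an intrinsic property preserved by $f$. Identifying $\partial\Omega_1$ with $\partial\Omega_2$ via $f$, I would then overlay the edge-graph $G_1$ of $\Omega_1$ with the pulled-back edge-graph $f^{-1}(G_2)$ of $\Omega_2$. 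The two graphs share all the vertices just described, and their union $G$ is a cell decomposition of the sphere each of whose $2$-cells lies inside a single face of $\Omega_1$ and, under $f$, inside a single face of $\Omega_2$; since faces are totally geodesic spherical polygons and $f$ is an isometry, $f$ restricts to a congruence of $\S^2$ on every $2$-cell of $G$.

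To each edge $e$ of $G$ I would attach a sign according to the comparison of the two dihedral angles it carries: the bending angle of $\partial\Omega_1$ across $e$ versus the bending angle of $\partial\Omega_2$ across $f(e)$, labelling $e$ by $+$, $-$ or $0$ as the second is larger, smaller or equal. The theorem then reduces to showing that every edge is labelled $0$. Indeed, if all bending angles agree along the common combinatorial structure $G$, one reconstructs the global isometry as follows: pick a $2$-cell of $G$ and let $F$ be the unique orientation-preserving isometry of $\S^3$ matching it to the corresponding cell of $\Omega_2$; extend $F$ across each edge of $G$ using the equality of dihedral angles, and across each vertex using the equality of the cone-metrics. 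Simple-connectedness of the sphere guarantees that this continuation is single-valued, producing $F\colon\S^3\to\S^3$ with $F|_{\partial\Omega_1}=f$, whence $F(\Omega_1)=\Omega_2$ by convexity.

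It remains to rule out nonzero signs, and here the two classical ingredients enter. The combinatorial input is Cauchy's sign-change lemma: on any cell decomposition of $\S^2$ in which some edges carry signs $\pm$, there is a vertex incident to signed edges around which the cyclic sequence of incident signs exhibits at most two alternations. The geometric input is the spherical version of Cauchy's arm lemma applied to vertex links. At a vertex $V$ of $\Omega_1$ the link is a closed convex spherical polygon whose side-lengths are the face angles at $V$ (preserved by $f$, since $f$ is a congruence on each $2$-cell) and whose angles are precisely the dihedral angles along the edges emanating from $V$. If the signs at $V$ have at most two alternations and are not all zero, the cyclic edge-sequence splits into one arc on which the dihedral angle does not decrease and one on which it does not increase; the arm lemma then forces the link polygon to fail to close up with its prescribed side-lengths, a contradiction. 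Hence no edge can carry a nonzero sign: otherwise the combinatorial lemma would exhibit a vertex with at most two alternations, which the arm lemma forbids.

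The main obstacle, and the point at which Alexandrov's argument genuinely goes beyond Cauchy's, is the treatment of the \emph{crossing vertices} of the overlay $G$, i.e. the points where an edge of $G_1$ meets an edge of $f^{-1}(G_2)$ transversally. These are not cone-points: the total angle there equals $2\pi$ in both metrics, the link degenerates to a great circle, and the sign bookkeeping around the four half-edges must be arranged so that such vertices can never be the ``good'' vertex furnished by the combinatorial lemma. Verifying this, together with a careful formulation of the spherical arm lemma---which requires the links to be genuinely convex with perimeter less than $2\pi$ and all angles in $(0,\pi)$, so that one must keep track of the standing assumption that the polyhedra lie in an open hemisphere---constitutes the technical heart of the proof. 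Once these spherical incarnations of Cauchy's two lemmas are in place, the sign-counting contradiction runs exactly as in the Euclidean case, and the reconstruction of $F$ in the second paragraph yields the desired isometry of $\S^3$.
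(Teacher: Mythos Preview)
The paper does not prove this theorem; it simply cites Alexandrov's book, remarking that the Euclidean proof in \cite[Section~3.3.2]{Ale} adapts to $\S^3$ as noted in \cite[Section~3.6.4]{Ale}. Your outline \emph{is} that classical Cauchy--Alexandrov overlay argument, so you are in complete agreement with the paper's (outsourced) approach.

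Two small corrections to your sketch. First, the hypotheses needed for the spherical arm lemma on the vertex links---side lengths in $(0,\pi)$, angles in $(0,\pi)$, perimeter less than $2\pi$---follow directly from the vertex being a genuine cone-point of a convex polyhedron: the side lengths are face angles, the link angles are dihedral angles, and the perimeter is the cone angle. None of this requires the polyhedron itself to lie in an open hemisphere of $\S^3$; no such hypothesis is present in the statement, and in the paper's application (the proof of Lemma~\ref{weakangle}) the realized polyhedron need not be small. Second, when you reconstruct $F$ cell by cell you should take the isometry of $\S^3$ whose orientation behaviour matches that of $f$, not the orientation-preserving one a priori; in the paper's application $f$ is orientation-reversing and so is the resulting $F$.
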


Similarly, this theorem is commonly proven in the Euclidean case, see~\cite[Section 3.3.2]{Ale}, but the proof works the same in the spherical case as noted in~\cite[Section 3.6.4]{Ale}.

\begin{proof}[Proof of Lemma~\ref{weakangle}]
Suppose that such metric $d$ exists. Double $(D, d)$ along the boundary and obtain the 2-sphere with a convex spherical cone-metric, which we denote by $\Omega=(S, d^S)$. By Theorem~\ref{alex1}, it admits a realization as a convex polyhedron $P$ in the standard 3-sphere $\S^3$.

Consider an orientation-inverting isometry $f: \Omega \rightarrow \Omega$ that changes the halves of $\Omega$. From Theorem~\ref{alex2} there exists an isometry $F: \S^3 \rightarrow \S^3$ inducing $f$ at $\partial P$. We see that $F$ is orientation-reversing and the set of its fixed points includes a closed curve $\tau \subset \partial P$, which is the boundary of two glued disks from $\Omega$, and is the set of the fixed points of $f$. We get that $F$ is a symmetry with respect to a geodesic 2-sphere, which we denote by $\S^2$.  Clearly, $\partial P \cap \S^2=\tau$. The broken line $\tau$ bounds a convex spherical polygon $\psi$, which has perimeter $2\pi$ and at least three kink points.

As $\psi$ is convex, it belongs to the spherical lune $\sigma$ determined by any its kink point. The perimeter of $\sigma$ is $2\pi$, which is the same as the perimeter of $\psi$. It is easy to see that the only option is that $\psi$ is a spherical lune itself. As $\psi$ has at least three kink points, this is a contradiction.
\end{proof}

\subsubsection{Proof of Lemma~\ref{proper}}

\begin{proof}[Proof of Lemma~\ref{proper}]
If not, there exists a sequence $\{u^k\}$, $k=1,2, \ldots$ in $U_C(d)$ such that $\{u^{k}\}$ is leaving any compact subset of $U_C(d)$ and $\kappa^{k}:=\kappa(u^{k})$ converge to some point $\bar\kappa\in K$. Denote $d^{k}:=\Psi_d(u^{k})$ and up to taking a subsequence assume that $u^{k}$ converge to some $\bar u\in[-\infty,+\infty]^V$.

As there are finitely many triangulations $\mathcal T$ of $(S, V)$ for which $\mathfrak D_+(\mathcal T)\cap \mathfrak C_+(d)$ is non-empty, after picking a subsequence we may assume that there exists a triangulation $\mathcal T$ of $(S,V)$ such that $d^{k}\in \mathfrak D_+(\mathcal T)$ for all $k$. Denote its edge lengths by $l^k \in (0, \pi)^{E(\mathcal T)}$.
It follows that for every edge $e \in E(\mathcal T)$ with endpoints $v$ and $w$ we have
$$\sin\left(\frac{l_e^k}{2}\right)=\exp\left(\frac{u^1_v+u^1_w-u^k_v-u^k_w}{2}\right)\sin\left(\frac{l_e^1}{2}\right),$$
see Remark~\ref{inverse}.

By picking a subsequence we may also assume that $l^{k}$ converge to some $\bar l\in [0,\pi]^{E(\mathcal T)}$, and all the inner angles of triangles of $\mathcal T$ converge. We have the following three possible cases:

(a) for some $v\in V$ we have $\bar u_v=-\infty$, or

(b) for all $w\in V$ we have $\bar u_w\neq-\infty$, but for some $v \in V$ we have $\bar u_v = +\infty$, or

(c) all $\bar u_v \in (-\infty, +\infty)$.

%(a) for some triangle $T$ of $\mathcal T$ the sum of its edge lengths in $d^{k}$ converge to $2\pi$, or

%(b) for some $v\in V$ we have $\bar u_v=\infty$, or

%(c) for some $v\in V$ we have $\bar u_v=-\infty$.

\textit{Case (a).} Clearly, for every $w$ incident to $v$ we have $\bar u_w = +\infty$. Thus, in every triangle adjacent to $v$ the length of the opposite edge is zero in the limit. We need to consider further two subcases.

\textit{Case (a.1): For every edge $e$ incident to $v$ we have $\bar l_e \neq \pi$.}

Let $T$ be a triangle adjacent to $v$ and $\omega^{k}_{v, T}$ be the angle of $T$ at $v$ in $d^{k}$. We claim that $\omega^{k}_{v, T} \rightarrow 0$. Indeed, if the limiting length $\bar l_e$ of an edge $e$ of $T$ incident to $v$ is not zero, then the claim is clear. If not, denote the other endpoint of $e$ by $w$, the other edge of $T$ incident to $w$ by $e'$, the other end of $e'$ by $w'$ and the angle of $T$ at $w$ in $d^k$ by $\omega^k_{w, T}$. Then
$$\lim_k \sin\omega^{k}_{v, T}=\lim_k\frac{\sin l^k_{e'}}{\sin l^k_e}\sin\omega^k_{w, T}=\lim_k\frac{\sin\left(l^k_{e'}/2\right)}{\sin\left(l^k_e/2\right)}\sin\omega^k_{w, T}=$$
$$=\lim_k\exp\left(\frac{u^1_{w'}-u^k_{w'}-u^1_v+u^k_v}{2}\right)\frac{\sin\left(l^1_{e'}/2\right)}{\sin\left(l^1_e/2\right)}\sin\omega^k_{w, T}=0.$$
Also one can see that for sufficiently large $k$ the edge $e'$ is the smallest in $T$. Thus, $\omega^{k}_{v, T} \rightarrow 0$. We get $\bar\kappa_v=2\pi$, which contradicts to $\bar\kappa \in K$.

\textit{Case (a.2): For some edge $e$ incident to $v$ we have $\bar l_e = \pi$.} In this case, if $e'$ is another edge adjacent to $v$ in the same triangle with $e$, then by the triangle inequality we get $\bar l_{e'}=\pi$. By induction we see that this holds for every edge incident to $v$. Let $T$ be a triangle adjacent to $v$ and $\bar \omega^{k}_{v, T}$ be the angle of $T$ at $v$ in the limit. Then
$$\area(T, d^{k})\rightarrow 2\bar\omega^{k}_{v,T}$$
and
$$
\sum_{T: v \in T} \area(T, d^{k})\rightarrow 2\cdot(2\pi-\bar\kappa_v).
$$
Then by the Gauss--Bonnet theorem
$$
\left(\sum_{w\neq v}\bar\kappa_w\right)-\bar\kappa_v=4\pi-2\bar\kappa_v-\lim_k \area(S, d^{k})\leq0.
$$
This contradicts to $\bar\kappa\in K$.

\textit{Case (b).} If all $\bar u_w=+\infty$, then $\area(S,d^{k})\rightarrow 0$ and by the Gauss--Bonnet theorem we get $\sum_{w\in V}\bar\kappa_w=4\pi$. This contradicts to $\bar\kappa\in K$.

Suppose that there exist adjacent vertices $w$ and $v$ such that $\bar u_v=+\infty$ and $\bar u_w\in(-\infty,\infty)$.
Then by the triangle inequality and induction we get $\bar u_{v'}=+\infty$ for any $v'$ adjacent to $w$. It follows that for every $w'$ such that $\bar u_{w'} \in (-\infty, \infty)$ and for every $v'$ adjacent to $w'$ we have $\bar u_{v'} =  +\infty$. Thus, $\area(S,d^{k})\rightarrow 0$, which, again, contradicts to $\bar\kappa\in K$.

\textit{Case (c).} As all $\bar u_v \neq +\infty$, for each $e \in E(\mathcal T)$ we have $\bar l_e \neq 0$. Note also that if for a triangle $T$ the limit shape degenerates to an arc, then the Delaunay condition implies that the triangle $T'$ adjacent to $T$ by the largest side of $T$ degenerates to the half-sphere in the limit. In any case we see that the metrics $d^k$ converge to a convex spherical cone-metric $\bar d$ on $(S, V)$ with strictly positive curvature at every point of $V$. 

We need to examine what happens if a triangle $T$ of $\mathcal T$ converges in $\bar d$ to a non-convex one, i.e., to a spherical lune. If this does not happen, then the argument above shows that also all triangle inequalities stay strict, thus $\bar d \in \mathfrak D_+(\mathcal T)$ and Lemma~\ref{proper} is proved. Consider two subcases.

\textit{Case (c.1): no edge of $T$ equals $\pi$ in $d$.} Then $T$ converges to a half-sphere in $\bar d$. Its vertices can not coincide, as otherwise we get a vertex of non-positive curvature. Then $\overline{S- T}$ with the limit metric is a disk with convex spherical cone-metric, perimeter $2\pi$ and three kink points at the boundary. This contradicts to Lemma~\ref{weakangle}.

\textit{Case (c.2): some edge $e$ of $T$ equals $\pi$.} Let $T'$ be the triangle adjacent to $T$ by $e$. As the other edges of $T$ are strictly smaller than $\pi$, such $T'$ exists. Then $T'$ is also a spherical lune in $\bar d$. Moreover, due to the Delaunay condition, the union $T \cup T'$ is a spherical lune that is at least a half-sphere. The vertices of $T \cup T'$ can not coincide, as otherwise we get a vertex of non-positive curvature. Then $\overline{S-T-T'}$ with the limit metric is a disk with convex spherical cone-metric, perimeter $2\pi$ and four kink points at the boundary. This contradicts to Lemma~\ref{weakangle}.

\end{proof}

\subsection{Connectivity}

For $u \in U(d)$ we denote the metric $\Psi_d(u)$ by $u*d$.
Let $\mathbf 1=(1,...,1)\in\mathbb R^V$. It is easy to see that if $u \in U(d)$, then for all $\lambda > 0$ we have $u+\lambda \mathbf 1 \in U(d)$. Moreover, if $u*d \in \mathfrak D_+(\mathcal T)$, then $(u+\lambda \mathbf 1)*d \in \mathfrak D_+(\mathcal T)$.

The Euclidean cone-metric $\Phi((u+\lambda \mathbf 1)*d)$ is obtained from $\Phi(u*d)$ by multiplication of all edges in any geodesic triangulation by $e^{-\lambda}$. Thus, $$\delta(u+\lambda \mathbf 1)=\delta(u)$$
where by $\delta(u) \in \R^V$ we denote the discrete curvature of the Euclidean cone-metric $\Phi(u*d)$.

It is also not hard to see

\begin{lm}
\label{contract2}
For every $u\in U(d)$ and $v \in V$
$$\lim_{\lambda \rightarrow +\infty} \kappa_v(u+\lambda \mathbf 1) = \delta_v (u).$$
\end{lm}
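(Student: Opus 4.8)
The plan is to compute the limit of the discrete curvature $\kappa_v(u+\lambda\mathbf 1)$ as $\lambda\to+\infty$ by analyzing the asymptotic geometry of the ideal cone-polyhedron $P(d,u+\lambda\mathbf 1)$. Recall that pushing all horocycles out by $\lambda$ (i.e.\ adding $\lambda\mathbf 1$ to $u$) scales $\sin(l_e/2)$ for each edge by $e^{\lambda}$, or equivalently, via $\Phi$, multiplies the subtending Euclidean edge lengths by $e^{-\lambda}$. So the spherical triangles of a fixed Delaunay triangulation $\mathcal T$ shrink to Euclidean triangles: as $\lambda\to+\infty$, for each edge $e$ we have $l_e(u+\lambda\mathbf 1\ast d)\to 0$ and $\sin(l_e/2)\sim l_e/2\sim \frac12\cdot 2\sin(l_e/2)=\Phi$-length, so the triangle with spherical side lengths $l_e$ converges (after rescaling) to the Euclidean triangle with side lengths $2\sin(l_e(\cdot)/2)$, which is exactly a triangle of $\Phi(u\ast d)$ up to the global factor $e^{-\lambda}$.

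The key step is therefore: fix a triangulation $\mathcal T$ with $u\ast d\in\mathfrak D_+(\mathcal T)$, so that $(u+\lambda\mathbf 1)\ast d\in\mathfrak D_+(\mathcal T)$ for all $\lambda>0$ by the remark preceding the statement. The angle $\omega^\lambda_{v,T}$ of a triangle $T$ of $\mathcal T$ at $v$ in the metric $(u+\lambda\mathbf 1)\ast d$ converges to the corresponding Euclidean angle $\omega^0_{v,T}$ of the triangle of $\Phi(u\ast d)$ at $v$: this follows from the spherical law of cosines for angles together with $l_e\to 0$ and $\sin(l_e/2)/\sin(l_{e'}/2)\to$ (ratio of the $\Phi$-lengths), so all side-length ratios within a fixed triangle stabilize to those of the limiting Euclidean triangle, forcing the angles to converge. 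Summing over all triangles incident to $v$, $\omega_v(u+\lambda\mathbf 1\ast d)=\sum_{T\ni v}\omega^\lambda_{v,T}\to\sum_{T\ni v}\omega^0_{v,T}=\omega_v(\Phi(u\ast d))$, hence $\kappa_v(u+\lambda\mathbf 1)=2\pi-\omega_v\to 2\pi-\omega_v(\Phi(u\ast d))=\delta_v(u)$, which is the claim.

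A cleaner way to package the same computation, which I would probably use, is to invoke the cosine law of Lemma~\ref{coslaw} directly in the ideal cone-polyhedron: for an edge $e=\{v,w\}$ of $\mathcal T$ one has $\cos l_e = 1-2\exp(a_e-u_v-u_w)$, so replacing $u$ by $u+\lambda\mathbf 1$ sends $\exp(a_e-u_v-u_w)$ to $e^{-2\lambda}\exp(a_e-u_v-u_w)\to 0$, i.e.\ $l_e\to 0$ at the rate $l_e\sim 2\exp((a_e-u_v-u_w)/2)e^{-\lambda}$. Thus the rescaled lengths $e^{\lambda}l_e$ converge to $2\exp((a_e-u_v-u_w)/2)=2\sin(l_e(u\ast d)/2)$, which are precisely the edge lengths of $\Phi(u\ast d)$. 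Since the angle of a small spherical triangle depends continuously on the ratios of its (small) side lengths and limits to the angle of the Euclidean triangle with the limiting side lengths, the angle sum at $v$ converges as required.

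The main obstacle is not conceptual but bookkeeping: one must make sure the limiting triangles of $\Phi(u\ast d)$ are genuinely non-degenerate (which holds because $u\ast d\in\mathfrak D_+(\mathcal T)$ means all triangle inequalities are strict and perimeters are $<2\pi$, so the $\Phi$-image triangles are non-degenerate Euclidean triangles) and that the convergence of angles is uniform enough to pass the finite sum $\sum_{T\ni v}$ to the limit — but this is automatic since it is a finite sum of continuous functions. A minor subtlety is to note the answer is independent of the choice of Delaunay triangulation $\mathcal T$, which is immediate because both $\kappa_v$ and $\delta_v$ are intrinsically defined and $\Phi$ is well-defined on $\mathfrak M_{+,T}(S,V)$.
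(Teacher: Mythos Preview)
Your argument is correct and follows essentially the same approach as the paper: both reduce to the elementary fact that when the spherical triangles of a fixed Delaunay triangulation shrink (all $l_e\to 0$ while the ratios of $\sin(l_e/2)$ stay fixed and equal to those of the Euclidean triangle $\Phi(u*d)$), each spherical angle converges to the corresponding Euclidean angle, and hence the cone angle at $v$ converges. There is a small slip in your first paragraph --- adding $\lambda\mathbf 1$ multiplies $\sin(l_e/2)$ by $e^{-\lambda}$, not $e^{\lambda}$ --- but you correct this in the second displayed computation, so it does not affect the argument.

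The only stylistic difference is in how the angle convergence is justified. You invoke the spherical law of cosines and continuity of angles as functions of side-length ratios; the paper instead places the spherical triangle $T^+_\lambda$ and its chordal Euclidean triangle $T^0_\lambda$ on the unit sphere in $\mathbb R^3$ and looks at the spherical link at the vertex $v$: there one sees a small spherical triangle with two sides $\beta_{1,\lambda},\beta_{2,\lambda}\to\pi/2$ (the angles between the Euclidean edges at $v$ and the radius) and third side equal to the Euclidean angle $\omega$, whose opposite angle is the spherical angle $\omega^+_\lambda$; as the $\beta$'s tend to $\pi/2$ this forces $\omega^+_\lambda\to\omega$. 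Both arguments are equally short; the paper's 3D picture is perhaps more geometric, while yours stays intrinsically 2-dimensional and makes the role of the scaling $e^{-\lambda}$ more explicit.
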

\begin{proof}
Let $T^+_0$ be a convex spherical triangle, $v$ be its vertex and $\omega^+_0$ be its angle at $v$. Consider $T^+_0$ embedded in the unit 2-sphere in the Euclidean 3-space and let $T^0_0$ be the subtending Euclidean triangle. By $\omega$ denote its angle at $v$. By $T^0_{\lambda}$ denote the Euclidean triangle obtained from $T^0_0$ by multiplication of all its side length by $e^{-\lambda}$, $\lambda \geq 0$. We consider the vertices of $T^0_{\lambda}$ still at the unit sphere, $v$ still being its vertex. By $T^+_{\lambda}$ we denote the respective spherical triangle subtended by $T^0_{\lambda}$, by $\omega^+_{\lambda}$ denote its angle at $v$. It is enough to show that
$$\lim_{\lambda \rightarrow +\infty}\omega^+_{\lambda}=\omega.$$

Let $\beta_{1, \lambda}$, $\beta_{2, \lambda}$ be the angles between sides of $T^0_{\lambda}$ at $v$ and the ray from $v$ to the origin. In the spherical link at $v$ we have the spherical triangle with sides $\beta_{1, \lambda}$, $\beta_{2, \lambda}$ and $\omega$. The angle of this triangle opposite to the side of length $\omega$ is equal to $\omega^+_{\lambda}$. Clearly, $\beta_{1, \lambda}, \beta_{2, \lambda}$ tend to $\pi/2$ as $\lambda \rightarrow +\infty$. This shows the desired claim.
\end{proof}

We need another simple statement

\begin{lm}
\label{contract1}
For every $u\in U(d)$ and $v \in V$
$$\dot \kappa_v(\mathbf 1)= \sum_{w \in V}\frac{\partial \kappa_v}{\partial u_w} > 0.$$
Here we consider $\mathbf 1$ as an element of $T_u\R^V$ and $\dot \kappa_v(\mathbf 1)$ is the directional derivative in this direction.
\end{lm}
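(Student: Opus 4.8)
The plan is to compute the directional derivative $\dot\kappa_v(\mathbf 1)=\sum_{w\in V}\partial\kappa_v/\partial u_w$ directly from the formulas of Lemma~\ref{secder} and to read off positivity term by term. Fix a face triangulation $\mathcal T$ of $P(d,u)$, and abbreviate $A_{\vec e}:=\frac{\cot\alpha^+_{\vec e}+\cot\alpha^-_{\vec e}}{\rho_{\vec e}\exp(u_v)\sin l_{\vec e}}$. First I would split the oriented edges of $\vec E_v(\mathcal T)$ into the loops at $v$, which form $\vec E_{vv}(\mathcal T)$, and the remaining oriented edges, which make up exactly $\bigsqcup_{w\neq v}\vec E_{vw}(\mathcal T)$. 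Summing the first expression of (\ref{dervw}) over $w\neq v$ then yields $\sum_{\vec e\in\vec E_v(\mathcal T)\setminus\vec E_{vv}(\mathcal T)}A_{\vec e}$; adding (\ref{dervv}), the loop sum there combines with this to produce one sum $\sum_{\vec e\in\vec E_v(\mathcal T)}A_{\vec e}$, while the remaining sum of (\ref{dervv}) contributes $-\sum_{\vec e\in\vec E_v(\mathcal T)}A_{\vec e}\cos l_{\vec e}$. After combining I expect to land on
$$\dot\kappa_v(\mathbf 1)=\sum_{w\in V}\frac{\partial\kappa_v}{\partial u_w}=\sum_{\vec e\in\vec E_v(\mathcal T)}\frac{(\cot\alpha^+_{\vec e}+\cot\alpha^-_{\vec e})(1-\cos l_{\vec e})}{\rho_{\vec e}\exp(u_v)\sin l_{\vec e}}.$$

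Next I would check that each summand is nonnegative. Writing $\alpha_{\vec e}:=\alpha^+_{\vec e}+\alpha^-_{\vec e}$, which is the dihedral angle $\alpha_e\in(0,\pi]$ of the underlying boundary edge $e$ of the convex cone-polyhedron $P(d,u)$ (it equals $\pi$ exactly when $e$ is a flat edge), one has the identity $\cot\alpha^+_{\vec e}+\cot\alpha^-_{\vec e}=\frac{\sin\alpha_{\vec e}}{\sin\alpha^+_{\vec e}\sin\alpha^-_{\vec e}}$, which is $\geq 0$ and is $>0$ precisely when $\alpha_{\vec e}<\pi$; here $\alpha^\pm_{\vec e}\in(0,\pi)$ because they are dihedral angles of non-degenerate semi-ideal tetrahedra. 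The remaining factors are all positive: $\rho_{\vec e}>0$ and $\exp(u_v)>0$, and $l_{\vec e}\in(0,\pi)$ since it is an edge length of a convex spherical triangle, so $1-\cos l_{\vec e}>0$ and $\sin l_{\vec e}>0$. Thus every summand is $\geq 0$.

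The only step needing a small geometric input — and the place I would be most careful — is exhibiting one strictly positive summand, that is, one $\vec e\in\vec E_v(\mathcal T)$ with $\alpha_{\vec e}<\pi$. For this I would invoke Gauss--Bonnet for the horospherical cone-polygon link of $P(d,u)$ at $v$: as in (\ref{t}), it reads $\sum_{\vec e\in\vec E_v(\mathcal T)}(\pi-\alpha_{\vec e})=\omega_v$, and since the total cone angle $\omega_v$ is always positive, at least one summand $\pi-\alpha_{\vec e}$ is positive. (The set $\vec E_v(\mathcal T)$ is nonempty because every vertex of a Delaunay-type triangulation carries an edge.) Combining this with the previous paragraph gives $\dot\kappa_v(\mathbf 1)>0$, which is the assertion; I would note that the argument uses only convexity of $P(d,u)$, hence holds for all $u\in U(d)$ and does not require $\kappa_v>0$.
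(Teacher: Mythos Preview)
Your proof is correct and follows essentially the same approach as the paper: combine the formulas of Lemma~\ref{secder} to obtain
\[
\sum_{w\in V}\frac{\partial\kappa_v}{\partial u_w}=\sum_{\vec e\in\vec E_v(\mathcal T)}\frac{(\cot\alpha^+_{\vec e}+\cot\alpha^-_{\vec e})(1-\cos l_{\vec e})}{\rho_{\vec e}\exp(u_v)\sin l_{\vec e}},
\]
and then read off positivity. The paper simply asserts the inequality at this point, whereas you spell out the nonnegativity of each summand and use the Gauss--Bonnet relation $\sum_{\vec e}(\pi-\alpha_{\vec e})=\omega_v>0$ at the horospherical link to guarantee a strictly positive term; this extra justification is welcome and entirely in line with the paper's setup.
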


\begin{proof}
Indeed, due to Lemma~\ref{secder} we have
$$\sum_{w \in V}\frac{\partial \kappa_v}{\partial u_w}=\sum_{\vec e \in E_{v}(\mathcal T)}\frac{\cot(\alpha^+_{\vec e})+\cot(\alpha^-_{\vec e})}{\rho_{\vec e}\exp (u_v)\sin (l_{\vec e}) }\left(1-\cos(l_{\vec e})\right)>0.$$
\end{proof}

\begin{proof}[Proof of Lemma~\ref{connect}]
Let $u$, $u' \in U_C(d)$. Due to Theorem~\ref{glsw}, there exists a path of metrics $d_t \subset \mathfrak C_0(\Psi(d))$, $t \in [0,1]$, connecting $d_0 := \Psi(u*d)$ with $d_1 := \Psi(u'*d)$ such that
$$\delta(d_t)=t\delta(u')+(1-t)\delta(u).$$

Moreover, as $\delta$ is constant on the homothetic changes of the metric, one can choose $d_t$ so that the radii of the circumscribed circles of all triangles in the Delaunay triangulations of $d_t$ are less than one. Thus, there exists the preimage $\Psi^{-1}(d_t) \subset \mathfrak C_+(d)$ descending to a path $u_t \subset U(d)$ connecting $u$ and $u'$. 

From Lemma~\ref{contract2} for a sufficiently large $\lambda$ the path $u_{\lambda, t}:=u_t+\lambda \mathbf 1$
belongs to $U_C(d)$. Lemma~\ref{contract1} implies that we can connect in $U_C(d)$ the endpoints of this path $u_{\lambda, t}$ with $u$ and $u'$ respectively along the direction $\mathbf 1$. This finishes the proof.
\end{proof}

\bibliographystyle{abbrv}
\bibliography{ref}

\end{document}